\theoremstyle{plain}
\newtheorem{thm}{Theorem}[section]
\newtheorem{lem}[thm]{Lemma}
\newtheorem{prop}[thm]{Proposition}
\newtheorem{cor}[thm]{Corollary}
\newtheorem{alg}[thm]{Algorithm}
\theoremstyle{definition}
\newtheorem{defn}[thm]{Definition}
\theoremstyle{remark}
\newtheorem*{remark}{Remark}
\newtheorem*{conv}{Convention}
\DeclareMathOperator{\hl}{hl}
\DeclareMathOperator{\lk}{lk}
\DeclareMathOperator{\AO}{\mathcal{AO}}
\DeclareMathOperator{\Aut}{Aut}
\DeclareMathOperator{\PQ}{\mathcal{PQ}}
\DeclareMathOperator{\cp}{cap}
\DeclareMathOperator{\aux}{aux}
\newcommand{\ds}{\displaystyle}
\newcommand{\fg}{\leq_{f\!g}}
\begin{document}

\title{Elliptic subgroups in cyclic splittings of a free group}
\author{Brent B. Solie}

\maketitle

\begin{abstract}
We use Gersten's generalization of Whitehead's algorithm to determine whether a given finitely generated subgroup of a free group $F$ is elliptic in an elementary cyclic splitting of $F$. We provide a similar result for all elementary cyclic splittings of a free group of rank two and partial results towards an algorithm for ellipticity in elementary cyclic splittings of $F$ as an HNN-extension.
\end{abstract}


\section{Introduction}

In 1936, J. H. C. Whitehead developed an algorithm to solve the following problem: Given elements $u$ and $v$ in a finitely generated free group $F(X)$, is there an automorphism $\phi \in \Aut(F(X))$ such that $\phi(u) = v$? In his solution to this problem, Whitehead observed that the cyclically reduced length of an element of $F(X)$ serves as a measure of the word's complexity and that a finite set of particularly simple generators of $\Aut(F(X))$, now known as Whitehead automorphisms, can be used to systematically calculate the shortest elements of an element's $\Aut(F(X))$ orbit. By comparing these `minimal' elements for both $u$ and $v$, one can deduce whether they share the same orbit.

Decades later, in 1980, Gersten extended Whitehead's algorithm from single elements of $F(X)$ to finitely generated subgroups. Gersten's generalization answers the analogous problem: Given finitely generated subgroups $H$ and $K$ of a finitely generated free group $F(X)$, is there an automorphism $\phi \in \Aut(F(X))$ such that $\phi (H)=K$? Gersten's principle insight is that the appropriate measure of complexity for a finitely generated subgroup of $F(X)$ is the number of edges in its Stallings graph. The set of Whitehead automorphisms again provide a systematic way of calculating the automorphic images of $H$ and $K$ with minimal complexity, answering the question of whether $H$ and $K$ share an orbit.

The present paper investigates how Gersten's extension of Whitehead's algorithm may be used to study the structure of elementary cyclic splittings of $F(X)$.
An elementary cyclic splitting of $F(X)$ is a decomposition of $F(X)$ as a graph of groups with exactly one edge whose edge group is infinite cyclic.
Thus elementary cyclic splittings come in one of two varieties: \emph{segment splittings}, which correspond to the decomposition of $F(X)$ as an amalgamated product $F(X) \cong H \ast_\mathbb{Z} K$, and \emph{loop splittings}, which correspond to the decomposition of $F(X)$ as an HNN-extension $F(X) \cong H \ast_\mathbb{Z}$.
Elements or subgroups are said to be \emph{elliptic} if they are conjugate to elements or subgroups of $H$ or $K$ in a segment splitting, or conjugate to elements or subgroups of $H$ in a loop splitting.

Our main result is the following:

\begin{thm}
  Let $F(X)$ be a free group of finite rank, and let $H$ be a finitely generated subgroup of $F(X)$.
  There is an algorithm to determine whether $H$ is elliptic in some segment splitting of $F(X)$ and, if it is, produce such a splitting.
\end{thm}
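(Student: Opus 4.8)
The plan is to exploit the $\Aut(F(X))$-equivariance of ellipticity together with Gersten's complexity-minimization. An automorphism $\phi$ carries a segment splitting $F(X) = A \ast_{\langle w\rangle} B$ to the segment splitting $F(X) = \phi(A) \ast_{\langle \phi(w)\rangle} \phi(B)$ and carries elliptic subgroups to elliptic subgroups, so $H$ is elliptic in some segment splitting if and only if $\phi(H)$ is. I would therefore first run Gersten's algorithm on $H$ to compute the finite, effectively determined set of automorphic images of $H$ whose Stallings graph has the minimal number of edges, reducing the problem to detecting ellipticity for a subgroup already in such minimal position.

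Second, I would establish the combinatorial heart of the argument: a subgroup in minimal position is elliptic in a segment splitting exactly when a suitable ``Whitehead graph'' attached to its Stallings graph admits an \emph{admissible partition}, that is, a cut witnessing a decomposition $F(X) = A \ast_{\langle w\rangle} B$ that carries $H$ into one factor up to conjugacy. This is the analogue, for finitely generated subgroups and cyclic splittings, of Whitehead's cut-vertex lemma, which detects via the connectivity and cut vertices of the Whitehead graph when a cyclically reduced word of minimal length admits a length reduction and hence sits in a splitting of $F(X)$. Since the data of such a partition is finite and the set of candidate partitions is enumerable, the condition is algorithmically checkable.

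Third, from an admissible partition I would reconstruct the splitting explicitly, reading off the edge generator $w$ together with the two vertex groups $A$ and $B$ from the partition and the Stallings graph, verify that $H$ lies up to conjugacy in one factor, and then pull the splitting back along the minimizing automorphism to obtain a segment splitting of $F(X)$ in which the original $H$ is elliptic. If no minimal representative admits an admissible partition, I would conclude that $H$ is elliptic in no segment splitting.

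The main obstacle is the second step, namely the subgroup peak-reduction lemma guaranteeing completeness of the search: one must show that if $H$ is elliptic in \emph{any} segment splitting whatsoever, then some minimal-complexity representative is elliptic in a splitting that is visible as a partition of the associated graph, so that no unbounded search over edge generators $w$ or over the full $\Aut(F(X))$-orbit is needed. Proving that minimality of the Stallings graph forces the witnessing splitting into this bounded, combinatorial form, extending Whitehead's length-reduction and cut-vertex phenomena from single elements to finitely generated subgroups in the manner of Gersten, is where the real work lies; the reconstruction and verification in the third step are then essentially bookkeeping on finite graphs.
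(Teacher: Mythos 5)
Your first step (ellipticity is $\Aut F(X)$-equivariant, so minimize via Gersten's algorithm) matches the paper, but your second step is not a proof sketch with a gap --- it \emph{is} the theorem, asserted rather than proved. You never define ``admissible partition,'' and you never address the first genuine obstruction: a priori one must search over all edge generators $w$ and all pairs of vertex groups, an unbounded family. The paper eliminates this with a normal form (Proposition \ref{Solie--Prop--Characterization of vertex subgroups}, via Bestvina--Feighn): up to automorphism the vertex groups of a nontrivial, very small segment splitting are $\langle A, b\rangle$ and $\langle B\rangle$ with $A \sqcup B = X$ a basis, $\#A \geq 1$, $\#B \geq 2$, and $b \in \langle B \rangle$ not a proper power. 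This converts ellipticity into the automorphic orbit problem $\AO(\mathcal{SV})$ and replaces your partition condition by a concrete graph-theoretic target that does \emph{not} fix $b$ in advance: $H$ is conjugate into a standard vertex group if and only if $S_X(H)$ immerses onto a graph with Property $(S)$ (an $A$-labelled bouquet plus a rank-one $B$-subgraph), which is checked on the finitely many quotients of the finitely many elements of $\min(S_X(H))$. Note also that the paper's key lemma assumes $H$ lies in no proper free factor, so the algorithm first decides $\AO(\mathcal{SF})$ (containment in $\langle B \rangle$ already gives ellipticity) before the Property-$(S)$ check; your proposal omits this case division, as well as the rank-two degeneration, where $\mathcal{SV} = \emptyset$ and one must instead work with the loop vertex subgroups $\langle a, a^b\rangle$.

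More seriously, the completeness lemma you defer --- ``minimality forces the witnessing splitting into visible combinatorial form'' --- cannot be proved by a one-step analogue of Whitehead's cut-vertex lemma, which is what your sketch gestures at. The invariant that actually survives minimization is ``immerses onto a graph satisfying Property $(S)$,'' and the paper's peak-reduction argument must confront a case in which a reducing Whitehead automorphism $(C,m)$ with $m \in B^\pm$ cuts $A^\pm$ and \emph{no single} reducing automorphism preserves the invariant: one shows instead that $(\Delta \cup b_r, b_r)$ is reducing and then applies the whole sequence $\phi_i = (\Delta \cup b_i, b_i)$ for $i = r, \dots, 1$, proving each step is reducing by tracking hyperedge types such as $[\Delta, b_i; \Sigma \cup \Pi_i]$ through the Whitehead hypergraphs; Property $(S)$ is restored only after $\Delta$ has been carried around the entire loop $b = b_1 \cdots b_r$ (Proposition \ref{Solie--Prop--multiply by b}). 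Consequently the correct criterion quantifies over \emph{some} element of $\min(S_X(H))$ and its quotients, not over an arbitrary minimal representative with a partition verified after each single reduction. Until you supply (i) the normal-form reduction bounding the search and (ii) this multi-step reduction lemma, the proposal decides nothing; the reconstruction and pullback in your third step are indeed routine, and the paper obtains the explicit splitting exactly as you suggest, by inverting the minimizing sequence of Whitehead automorphisms.
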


In the first section of the present paper, we review some preliminaries concerning free groups, Stallings graphs, splittings, and Whitehead's algorithm. In the second section, we present the main result, its proof, and some partial results in the case of loop splittings. 

\section{Background}

The exposition provided in this section on Stallings graphs and folding is standard. Complete details can be found, for instance, in \cite{Kapovich2002}.

Let $X$ be a finite set with at least two elements.
Define $X^{-1} := \{x^{-1} : x \in X\}$ to be the set of \emph{formal inverses} of elements of $X$, and set $X^\pm := X \sqcup X^{-1}$.
We denote the set of words on the letters $X^\pm$ by $(X^\pm)^*$.
A word in $(X^\pm)^*$ is \emph{freely reduced} if it has no subword of the form $xx^{-1}$ or $x^{-1}x$ for any $x \in X$.
A word in $(X^\pm)^*$ is \emph{cyclically reduced} if every cyclic permutation of that word is freely reduced.

Let $F(X)$ be the free group on the letters $X$.
The \emph{$X$-length} of $w \in F(X)$, denoted $|w|_X$, is the length of the freely reduced word in $(X^\pm)^*$ which represents $w$.
We will indicate that $H$ is a finitely generated subgroup of $F(X)$ by $H \fg F(X)$.

\subsection{Stallings Graphs}

\begin{defn}[$X$-digraph] \index{$X$-digraph}
  Given a finite set $X$, an \emph{$X$-digraph} is given by the data $(V, E, \cdot_+, \cdot_-, \lambda)$, where:
  \begin{itemize}
    \item $V$ and $E$ are sets;
    \item $\cdot_+, \cdot_- : E \rightarrow V$; and
    \item $\lambda: E \rightarrow X$.
  \end{itemize}
  We call $V$ the \emph{vertex set} and $E$ the \emph{edge set}.
  For $e \in E$, we say that $e_-$ is the \emph{initial vertex} of $e$ and $e_+$ is the \emph{terminal vertex} of $e$.
  We call \emph{lambda} the labeling function and $\lambda(e)$ the \emph{label} of edge $e$.

  Let $S$ be an $X$-digraph.
  By $VS$ and $ES$ we denote the vertex and edge sets of $S$, respectively.
  For $v \in VS$, we define the \emph{in-link} of $v$ to be $\lk_+(v) := \{e \in ES: e_+ = v\}$, and we say the \emph{in-hyperlink} of $v$ is $\hl_+(v) := \{\lambda(e) : e \in \lk_+(v) \}$.
  Likewise, we define the \emph{out-link} of $v$ to be $\lk_-(v) := \{e \in ES: e_- = v\}$ and the \emph{out-hyperlink} of $v$ to be $\hl_-(v) := \{ \lambda(e)^{-1} : e \in \lk_-(v)\}$.
  The \emph{link} of $v$ is $\lk(v) := \lk_-(v) \cup \lk_+(v)$ and the \emph{hyperlink} of $v$ is $\hl(v) := \hl_-(v) \cup \hl_+(v)$.

  We say that the \emph{degree} of $v \in VS$ is $\deg_S(v) := \# \lk(v)$.
  If $\deg_S(v) = 0$ we say that $v$ is \emph{isolated}, and if $\deg_S(v) = 1$ we say that $v$ is a \emph{leaf}.
  If no vertex of $S$ is a leaf, we say that $S$ is \emph{cyclically reduced}.

  If $u, v \in VS$ are such that there is $e \in ES$ with $e_- = v$ and $e_+ = u$, then we say that $u$ and $v$ are \emph{adjacent} and that $e$ is \emph{incident} to both $u$ and $v$.
  If $e, f \in ES$ are such that $e_- = f_-$, then we say that $e$ and $f$ are \emph{coinitial}; if $e_+ = f_+$, then $e$ and $f$ are \emph{coterminal}.
  The edges $e$ and $f$ are \emph{coincident} if $e$ shares an endpoint with $f$.

  Let $Y \subseteq X$.
  A \emph{$Y$-edge} is any edge with label in $Y$.
  The set of $Y$-edges of $S$ is denoted $E_Y S$.

  \begin{conv}
    To aid readability, we will always denote singleton sets by their unique element.
    For instance, if $x \in X$, we will write $x$-edges rather than $\{x\}$-edges, and the set of $x$-edges of $S$ will be denoted $E_x S$ rather than $E_{\{x\}} S$.
  \end{conv}

  We say that $S$ is \emph{folded at $v$} if $\lambda$ induces a bijection $\lk(v) \rightarrow \hl(v)$.
  The graph $S$ is \emph{folded} if $S$ is folded at every vertex.
  If $S$ is not folded, then some pair of coterminal or coinitial edges $e, f \in ES$ share the same label.
  We \emph{fold} these edges by identifying the pair of edge $e$ and $f$ and either the vertices $e_-$ and $f_-$ if $e$ and $f$ are coinitial or the vertices $e_+$ and $f_+$ if $e$ and $f$ are coterminal.
  The process of performing folds in $S$ until none remain is called \emph{folding}. [TODO: put in a reference on stallings graphs and assure the reader that the end result is an invariant reduced graph.]

  We may \emph{delete} a leaf $v$ of $S$ by deleting $v$ and the unique edge incident to it.
  By repeatedly deleting leaves, we eventually arrive at a cyclically reduced $X$-digraph.
  We call this process \emph{cyclic reduction}.

  Lastly, if $S$ and $T$ are $X$-digraphs, an \emph{$X$-map} is a map $S \rightarrow T$ which sends vertices to vertices, edges to edges, and preserves both orientation and label.
  We will assume that all of our maps of $X$-digraphs are $X$-maps.
  An $X$-map is an \emph{immersion} if the induced map on the link of a vertex is injective for every vertex of $S$.
\end{defn}

\begin{defn}[Dual digraph] \index{dual digraph}
  Given an $X$-digraph $S$, we may construct the \emph{dual} of $S$, denoted $S^*$, by adding a set of \emph{formal inverse edges} $\overline{E}S := \{\bar{e} : e \in ES\}$ and extending $\cdot_-, \cdot_+,$ and $\lambda$ as follows:
  \begin{align*}
    & (\bar{e})_- := e_+, \\
    & (\bar{e})_+ := e_-, \text{ and }\\
    & \lambda(\bar{e}) := \lambda(e)^{-1}.
  \end{align*}
  By defining $\overline{(\bar{e})} = e$, the above equations are satisfied for any $e \in ES^*$.

  A \emph{path} $p$ in an $X$-digraph $S$ is a sequence of edges $p = e_1 e_2 \dots e_l$ in the dual $S^*$ such that $(e_i)_+ = (e_{i+1})_-$ for all $i=1, \dots, l-1$.
  The path $p$ is a \emph{loop} if we further have that $(e_l)_+ = (e_1)_-$.
  The path is \emph{immersed} if $e_{i+1} \neq \bar{e}_i$ for all $i=1, \dots, l-1$.
  The \emph{label} of $p$ is $\lambda(p):= \lambda(e_1) \dots \lambda(e_l)$.
  The \emph{length} of $p$ is $l$.
  We say that an $X$-digraph $S$ is \emph{connected} if there exists a path between any two vertices.
\end{defn}

\begin{remark}
  For our purposes, we will not distinguish much between an $X$-digraph $S$ and its dual $S^*$.
  Specifically, for $x \in X$, we will regard an $x^{-1}$-edge as simply an $x$-edge with the opposite orientation.
  If $e$ is an $x$-edge, we will therefore consider it as an $x$-edge in that it contributes the label $x$ to the hyperlink of $e_+$, and also as an $x^{-1}$ edge as it contributes the label $x^{-1}$ to the hyperlink of $e_-$.
\end{remark}

\begin{defn}[Stallings graph] \index{Stallings graph}
  Let $H \fg F(X)$.
  The \emph{Stallings graph} representing $H$ with respect to $X$, denoted $S_X(H)$, is the unique $X$-digraph with basepoint such that a freely reduced word in $(X^\pm)^*$ represents an element of $H$ if and only if it occurs as the label of an immersed loop of $S_X(H)$ beginning and ending at the basepoint.

  Recall that we may construct $S_X(H)$ as follows.
  Let $h_1, \dots, h_k$ be elements of $(X^\pm)^*$ representing a finite set of generators of $H$.
  Beginning with a basepoint, denoted $1$, we construct a loop beginning and ending at $1$ with label $h_i$ for each $i$; let $S_0$ be the resulting graph.
  We then perform all possible folds in $S_0$ (in any order) to obtain a folded graph $S_1$.
  Finally, we repeatedly delete leaves of $S_1$ different from $1$ until no leaves remain except possibly the basepoint.
  The resulting graph is $S_X(H)$, and it is well-known that $S_X(H)$ is invariant with respect to the choice of generating set for $H$ as well as the order of the folds and leaf deletions.

  Suppose that $S_X(H)$ is cyclically reduced.
  For any $g \in F(X)$, we may construct $S_X(H^g)$ from $S_X(H)$ by adding a new basepoint $1'$, a path from $1$ to $1'$ labeled by $g$, and then folding and deleting non-basepoint leaves.
  Therefore whenever $S_X(H)$ is cyclically reduced, we will forget the basepoint and think of $S_X(H)$ as representing $H^{F(X)}$, the conjugacy class of $H$ in $F(X)$, rather than the single subgroup $H$.
\end{defn}

\subsection{Whitehead's Algorithm}

Our discussion of Gersten's extension of Whitehead's algorithm follows that of \cite{Roig2007}.

\begin{defn}[Whitehead automorphism] \index{Whitehead automorphism}
  \label{Defn--Whitehead automorphism}
  A \emph{type I Whitehead automorphism} \index{Whitehead automorphism} is an automorphism $\phi \in \Aut F(X)$ which is induced by permutations and inversions of the set $X^\pm$.

  A \emph{type II Whitehead automorphism} is an automorphism $\phi \in \Aut F(X)$ for which there exists $m \in X^\pm$ such that $\phi(m)=m$ and
  \begin{equation*}
    \phi(x) \in \{x, m^{-1}x, xm, x^m\}
  \end{equation*}
  for all $x \in X$.
  We call $m$ the \emph{multiplier} for $\phi$.

  Given a type II Whitehead automorphism $\phi$ with multiplier $m$, define
  \begin{equation*}
    C := \left\{ x \in X^\pm : \phi(x) \in \{m, xm, x^m\} \right\}.
  \end{equation*}
  Then $\phi$ is determined completely by the pair $(C,m)$, and we refer to $C$ as the \emph{cut} for $\phi$.

  Let $C \subseteq X^\pm$ be such that $m \in C$ and $m^{-1} \notin C$.
  We call such a $C$ an \emph{$m$-cut}.
  For any $m \in X^\pm$ and $m$-cut $C$, the pair $(C,m)$ defines a type II Whitehead automorphism of $F(X)$.

  More generally, if $C, D \subseteq X^\pm$, we say that $C$ \emph{cuts} $D$ if $D$ contains an element of both $C$ and $C' := X^\pm - C$.
\end{defn}

\begin{defn}[Hypergraph] \index{hypergraph}
  A \emph{hypergraph} is a tuple $(V,E,\iota)$, where $V$ and $E$ are sets and $\iota: E \rightarrow \mathcal{P}(V)$, where $\mathcal{P}(V)$ denotes the power set of $V$.
  The elements of $V$ are called \emph{vertices} and the elements of $E$ are called \emph{hyperedges}.
  We call $\iota$ the \emph{incidence function}.

  Let $\Gamma$ be a hypergraph.
  We refer to the vertex and hyperedge sets of $\Gamma$ by $V\Gamma$ and $E\Gamma$, respectively.
  We will refer to the incidence function by simply $\iota$ when $\Gamma$ is clear from context.
  We say that a hyperedge $e \in E\Gamma$ is \emph{incident} to a vertex $v \in V\Gamma$ if $v \in \iota(e)$.
  A pair of hyperedges $e, e' \in E\Gamma$ are \emph{coincident} if $\iota(e) \cap \iota(e') \neq \emptyset$.
  Two vertices $v, v' \in V\Gamma$ are \emph{adjacent} if there is a hyperedge $e \in E\Gamma$ with $v, v' \in \iota(e)$.

  More generally, if $Y \subset V\Gamma$, we say that a hyperedge $e \in E\Gamma$ is \emph{incident} to $Y$ if $\iota(e) \cap Y \neq \emptyset$.
  Let $Y_1, \dots, Y_n, Z$ be subsets of $V\Gamma$.
  We say that a hyperedge $e \in E\Gamma$ has \emph{type} $(Y_1, Y_2, \dots, Y_n; Z)$ if $e$ is incident to each $Y_i$ for $i=1, \dots, n$ but $e$ is not incident to $Z$.
  When $Z$ is empty, we will write $(Y_1, Y_2, \dots, Y_n)$ instead of $(Y_1, Y_2, \dots, Y_n; \emptyset)$.
  We denote by $[Y_1, Y_2, \dots, Y_n; Z]_\Gamma$ the number of hyperedges of $\Gamma$ of type $(Y_1, Y_2, \dots, Y_n; Z)$.

  Let $Y \subseteq V\Gamma$, and let $Y'$ denote the complement $V\Gamma-Y$.
  We define the \emph{capacity} of $Y$ in $\Gamma$ to be the number of hyperedges of $\Gamma$ incident to both $Y$ and its complement; in the above notation,
  \begin{displaymath}
    \cp_\Gamma(Y) = [Y,Y']_\Gamma.
  \end{displaymath}

  Let $v \in V\Gamma$. The \emph{degree} of $v$ in $\Gamma$ is the number of edges incident to $v$; in the above notation,
  \begin{displaymath}
    \deg_\Gamma(v) = [v]_\Gamma.
  \end{displaymath}
\end{defn}

\begin{defn}[Whitehead hypergraph] \index{Whitehead hypergraph}
  Let $S$ be a cyclically reduced $X$-digraph.
  We define the \emph{Whitehead hypergraph of $S$} to be the hypergraph $\Gamma(S):= (X^\pm, VS, \hl:VS \rightarrow \mathcal{P}(X^\pm))$.
\end{defn}

Given a cyclically reduced $X$-digraph $S$ and an automorphism $\phi \in \Aut F(X)$, one may construct $\phi(S)$ from $S$ as follows.
First, for all $x \in X$, we subdivide every $x$-edge in $S$ into a path and relabel this path with $\phi(x)$.
We fold the resulting graph and then delete leaves until none remain; the final graph is $\phi(S)$.
When $S$ represents the conjugacy class $H^{\Aut F(X)}$, we have that $\phi(S)$ represents $\phi(H)^{\Aut F(X)}$.

When $\phi=(C,m)$ is a type II Whitehead automorphism, this construction has the special feature of being ``local''.
Let $v \in VS$ be such that $m \in \hl(v)$, and let $e$ be the $m$-edge with endpoints $v$ and $u$ for some $u \in VS$.
We ``unhook'' each edge in $\lk(v)$ with label in $C-m$ and reconnect that edge to $u$ instead.
If $v \in VS$ is such that $m \notin \hl(v)$, we then construct an \emph{auxiliary vertex} $v_{\aux}$ and an auxiliary $m$-edge with initial vertex $v_{\aux}$ and terminal vertex $v$.
We again ``unhook'' the edges of $\lk(v)$ with label in $C-\{m\}$ and reconnect them to $v_{\aux}$.
The result of performing these moves at every vertex is the graph $\phi_{\aux}(S)$, and we obtain $\phi(S)$ from $\phi_{\aux}(S)$ by cyclic reduction.
(See Figure \ref{Fig--Applying (C,m)}; the dotted edges represent edges which may or may not be present.)

\begin{figure}
  \begin{center}
    \subfigure[The neighborhood of $v$ before applying $\phi=(C,m)$.]{\includegraphics{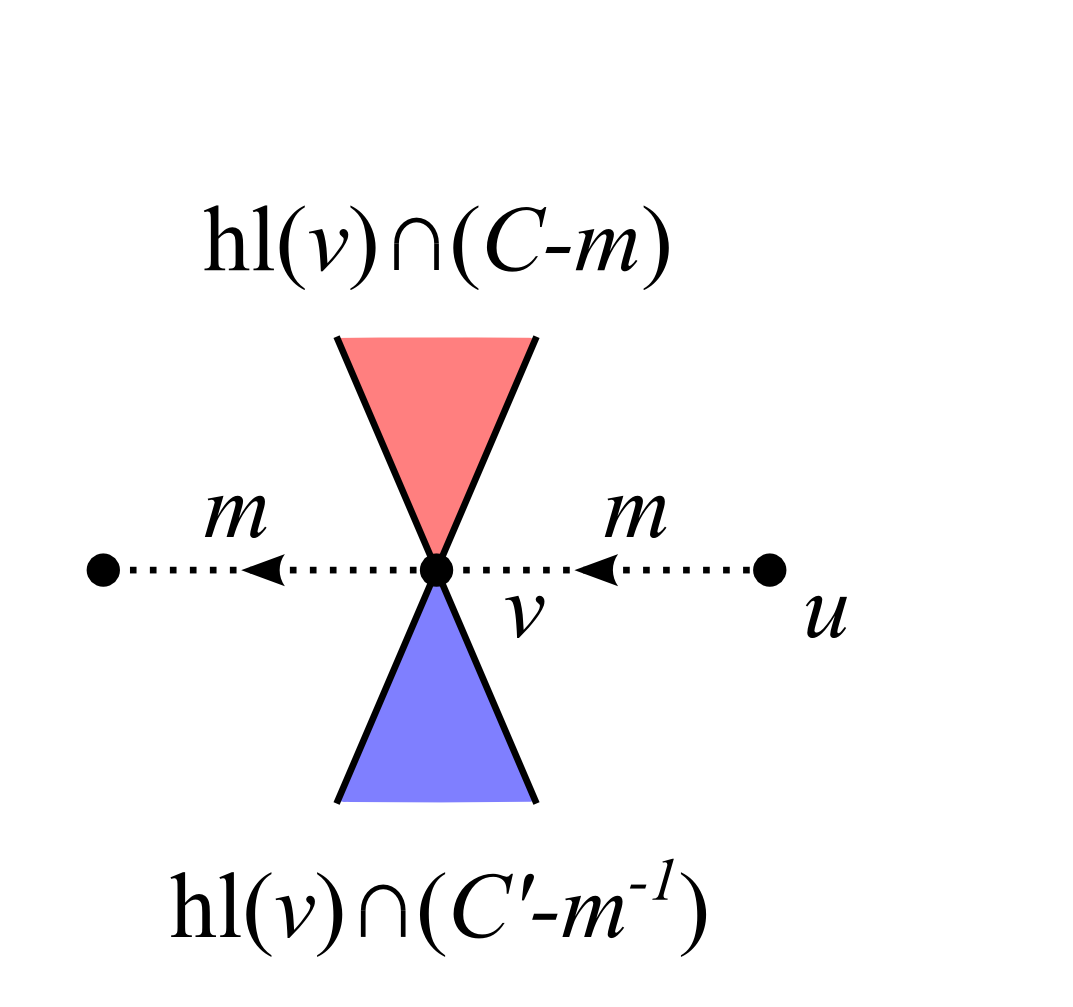}}
    \hspace{1cm}
    \subfigure[Constructing the auxiliary vertex $v_{\aux}$.]{\includegraphics{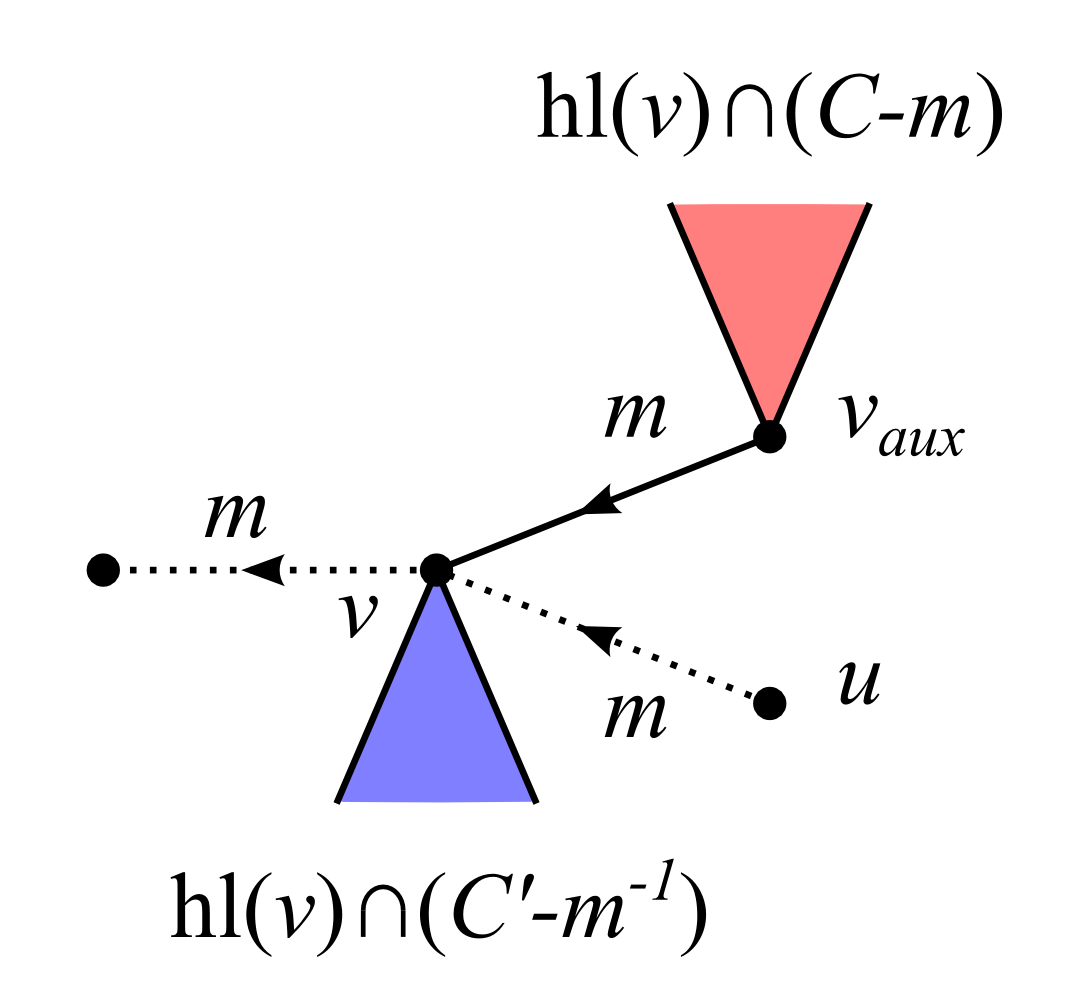}}
    \hspace{1cm}
    \subfigure[Folding identifies $v_{\aux}$ and the vertex $u$ at the other end of the edge corresponding to $m$ in $\hl(v)$. If no such vertex $u$ exists, no folding occurs.]{\includegraphics{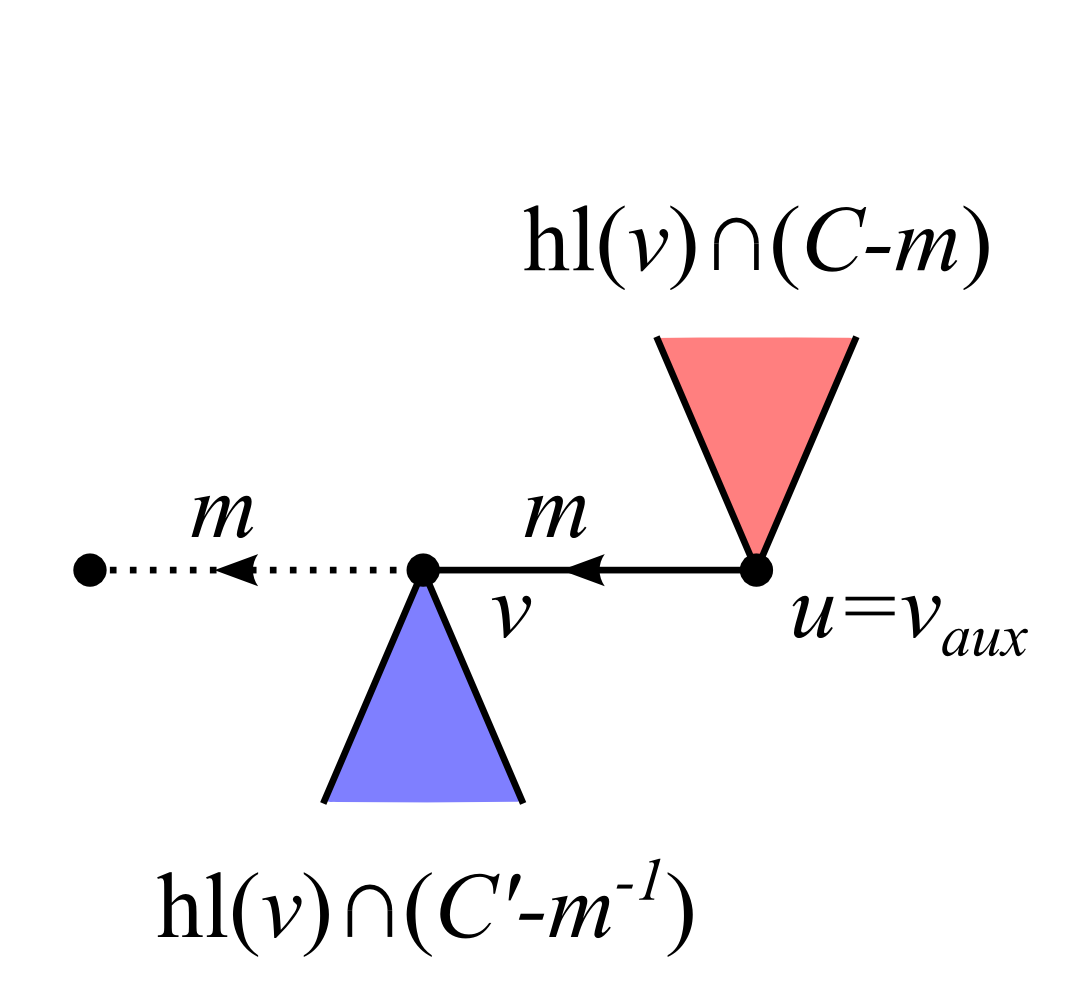}}
    \caption[The local effect of $\phi=(C,m)$ on an $X$-digraph]{Locally, the Whitehead automorphism $\phi=(C,m)$ moves the edges in $\hl(v) \cap (C-m)$ across the edge corresponding to $m \in \hl(v)$ (if present).}
    \label{Fig--Applying (C,m)}
  \end{center}
\end{figure}

We make the following observations about $\phi_{\aux}(S)$:
\begin{enumerate}
  \item
    There is an injection from the vertex set of $S$ to the set of non-auxiliary vertices of $\phi_{\aux}(S)$; we will refer to this injection simply as $\phi_{\aux}$.
  \item
    \begin{enumerate}
      \item
        Let $e$ be an $m$-edge of $S$ such that $e_- = u$ and $e_+ = v$.
        Then
        \begin{equation}
          \label{Eqn--Hyperlink 1}
          \hl(\phi_{\aux}(v)) = (\hl(v) \cap (C'\cup m) \cup (\hl(u) \cap (C - m).
        \end{equation}
      \item
        Let $v \in VS$ with $m \notin \hl(v)$. Then
        \begin{equation*}
          \label{Eqn--Hyperlink 2}
          \hl(v_{\aux}) = m^{-1} \cup \left(\hl(v) \cap \left(C - m\right)\right).
        \end{equation*}
    \end{enumerate}
  \item
    A vertex of $\phi_{\aux}(S)$ is a leaf if and only if it is one of the following:
    \begin{enumerate}
      \item
        $v_{\aux}$ for $v \in VS$ with $\hl(v) \subseteq C'$; or
      \item
        $\phi_{\aux}(v)$ for $v \in VS$ with $\hl(v) \subseteq (C-m)$.
    \end{enumerate}
\end{enumerate}

As a result, note that if $\phi=(\{m\},m)$, then $\phi(S) = S$.
If $\phi = (C,m)$ and $\phi' = (C', m^{-1})$, then note that $\phi(S) = \phi'(S)$.
This latter observation allows us to assume that, without loss of generality, $m \in X$.

By keeping careful track of the construction for $\phi(S)$, it is possible to describe the change in the number of vertices between $S$ and $\phi(S)$.

\begin{prop}[\cite{Roig2007}]
  \label{RVW--Prop--cap - deg}
  Let $S$ be a connected, cyclically reduced $X$-digraph with Whitehead hypergraph $\Gamma = \Gamma(S)$, and let $\phi=(C,m)$ be a type II Whitehead automorphism with $m \in X$.
  Then we have:
  \begin{equation*}
    \#V\phi(S) - \#VS = \cp_\Gamma(C) - \deg_\Gamma(m).
  \end{equation*}
\end{prop}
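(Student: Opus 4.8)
The plan is to count vertices in two stages: first pass from $S$ to the auxiliary graph $\phi_{\aux}(S)$, whose vertices are easy to enumerate, and then account for the vertices lost when $\phi_{\aux}(S)$ is cyclically reduced to $\phi(S)$. By Observation 1 the non-auxiliary vertices of $\phi_{\aux}(S)$ are in bijection with $VS$, contributing $\#VS$ vertices. An auxiliary vertex $v_{\aux}$ is created exactly for those $v \in VS$ with $m \notin \hl(v)$, and there are $\#VS - \deg_\Gamma(m)$ of these, since $\deg_\Gamma(m) = [m]_\Gamma$ counts precisely the $v$ with $m \in \hl(v)$. Hence $\#V\phi_{\aux}(S) = 2\#VS - \deg_\Gamma(m)$.

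It therefore suffices to show that cyclic reduction deletes exactly $\#VS - \cp_\Gamma(C)$ vertices, for then $\#V\phi(S) = (2\#VS - \deg_\Gamma(m)) - (\#VS - \cp_\Gamma(C)) = \#VS + \cp_\Gamma(C) - \deg_\Gamma(m)$, as desired. Now $\cp_\Gamma(C) = [C,C']_\Gamma$ counts the vertices $v$ whose hyperlink meets both $C$ and its complement $C' := X^\pm - C$, so $\#VS - \cp_\Gamma(C)$ is exactly the number of $v \in VS$ with $\hl(v) \subseteq C$ or $\hl(v) \subseteq C'$. I would aim to produce a bijection between the deleted vertices of $\phi_{\aux}(S)$ and these ``uncut'' vertices of $S$, assigning to each deleted vertex the unique $v \in VS$ from which it descends and checking that exactly one descendant of each uncut $v$ is removed while no descendant of a cut vertex is.

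To build this correspondence I would classify each $v \in VS$ by the position of $\hl(v)$ relative to the cut and track its descendants using the hyperlink formulas of Observation 2. Two of the cases are immediate from Observation 3: if $\hl(v) \subseteq C'$ then $v_{\aux}$ is a leaf while $\phi_{\aux}(v)$ survives, and if $\hl(v) \subseteq C - m$ then $\phi_{\aux}(v)$ is a leaf while $v_{\aux}$ survives; in either case exactly one descendant of $v$ is removed, and a vertex whose hyperlink is genuinely cut contributes no initial leaf at all. The delicate case is $\hl(v) \subseteq C$ with $m \in \hl(v)$: such a $\phi_{\aux}(v)$ is not an initial leaf of $\phi_{\aux}(S)$, yet by the count above exactly one of its descendants must still disappear. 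I expect this to be the main obstacle, since it forces one to follow the leaf deletions through a cascade rather than reading them off Observation 3 directly. I would argue inductively along the cyclic reduction, using Observation 2(a) to show that once the edges reconnected across the $m$-edge into $\phi_{\aux}(v)$ have themselves been pruned, $\phi_{\aux}(v)$ becomes a leaf and is deleted, and that this process never reaches a cut vertex. A useful consistency check throughout is that leaf deletion preserves the Euler characteristic while $\phi$ preserves the rank of $H$, so that $\#V\phi(S) - \#VS = \#E\phi(S) - \#ES$; this fixes the total number of deletions in advance and guards against over- or under-counting during the cascade.
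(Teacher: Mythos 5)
Your two-stage decomposition is the right one, and it matches the scaffolding the paper sets up (the paper itself gives no proof, deferring to \cite{Roig2007}, but the three observations about $\phi_{\aux}(S)$ preceding the proposition are exactly what you invoke): $\#V\phi_{\aux}(S) = 2\#VS - \deg_\Gamma(m)$ is correct, and the proposition does reduce to showing that cyclic reduction deletes exactly $\#VS - \cp_\Gamma(C)$ vertices, one per uncut vertex of $S$. The gap is precisely in the case you flag as the main obstacle, and the mechanism you propose for it is wrong. For $v$ with $\hl(v) \subseteq C$ and $m \in \hl(v)$ there is no cascade at all: in the local construction, \emph{every} edge-end at $v$ with label in $C - m$ is unhooked and reconnected at the other endpoint $u$ of the $m$-edge, so $\phi_{\aux}(v)$ retains only $\hl(v) \cap (C' \cup m) = \{m\}$ and is already a first-generation leaf of $\phi_{\aux}(S)$. (You were misled by the paper here: Observation 3 as printed omits exactly this case, and the displayed formula in Observation 2(a) — note its unbalanced parentheses — has the roles of $u$ and $v$ garbled; arrivals at $\phi_{\aux}(v)$ come from the head of the $m$-edge \emph{leaving} $v$, and a vertex of this type has no such edge since $m^{-1} \in C'$. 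A test on $H = \langle ab^{-2} \rangle$ with $\phi = (\{a,b\},a)$ exhibits this: the vertex with hyperlink $\{a,b\}$ becomes an immediate leaf.) Your cascade premise, that the edges reconnected into $\phi_{\aux}(v)$ ``have themselves been pruned,'' is false: reconnected edges are genuine edges of $\phi(S)$ whose other endpoints survive, and nothing ever prunes them; if $\phi_{\aux}(v)$ really carried such an edge it would never become a leaf and your count would fail.

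What is actually missing, once all three leaf types are identified (one per uncut vertex: $v_{\aux}$ when $\hl(v) \subseteq C'$, $\phi_{\aux}(v)$ when $\hl(v) \subseteq C - m$, and $\phi_{\aux}(v)$ when $\hl(v) \subseteq C$ with $m \in \hl(v)$), is the complementary check that the process stops after the first generation: each initial leaf is attached by a single edge, so deleting it removes only that edge, and one must verify the neighbor retains degree at least $2$. This is where cyclic reducedness of $S$ enters. For instance, when the delicate leaf $\phi_{\aux}(v)$ is deleted, its neighbor $\phi_{\aux}(u)$ loses the $m$-edge but has gained the $\deg_S(v) - 1 \geq 1$ ends unhooked from $v$ and keeps $\deg_S(u) - 1 \geq 1$ of its own, so it is not a leaf; analogous and easier checks cover the neighbors of the other two leaf types and show that both descendants of every cut vertex keep degree $\geq 2$. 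With that finite case analysis the deletions biject with the uncut vertices and your count closes. Finally, your Euler-characteristic ``consistency check'' cannot serve as the safeguard you want: since each leaf deletion removes one vertex and one edge, the relation $\#V\phi(S) - \#E\phi(S) = \#VS - \#ES$ holds identically for \emph{any} number of deletions, so it does not fix that number in advance.
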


We will find it useful to recast Proposition \ref{RVW--Prop--cap - deg} in terms of change in number of edges.

\begin{prop}
  \label{Solie--Prop--(C,m) only changes m edge count}
  Let $S$ be a connected, cyclically reduced $X$-digraph with Whitehead hypergraph $\Gamma = \Gamma(S)$, and let $\phi=(C,m)$ be a type II Whitehead automorphism with $m \in X$.
  Then we have:
  \begin{enumerate}
    \item
      $\#E\phi(S) - \#ES = \cp_\Gamma(C) - \deg_\Gamma(m)$
    \item
      For a Whitehead automorphism $\phi = (C,m)$ with $m \in X$, we have
      \begin{equation*}
        \#E_m\phi(S) - \#E_m S = \cp_\Gamma(C) - \deg \Gamma(m)
      \end{equation*}
      and
      \begin{equation*}
        \#E_xS = \#E_x\phi(H)
      \end{equation*}
      for all $x \neq m$.
  \end{enumerate}
\end{prop}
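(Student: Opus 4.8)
The plan is to deduce both statements from Proposition~\ref{RVW--Prop--cap - deg} (which controls the change in vertex count) together with two structural facts: that a type~II Whitehead automorphism with multiplier $m$ disturbs only the $m$-edges of $S$, and that the Euler characteristic of a connected Stallings graph is pinned down by the rank of the subgroup it represents.

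First I would prove part~(1) from the vertex count already in hand. Since $\phi$ is an automorphism, $\phi(H)$ has the same rank $r$ as $H$, and both $S$ and $\phi(S)$ are connected, cyclically reduced $X$-digraphs representing these conjugacy classes; here $\phi(S)$ is connected because subdivision, folding, and leaf deletion all preserve connectivity. A connected graph whose fundamental group has rank $r$ satisfies $\#E - \#V = r - 1$, so this identity holds for $S$ and for $\phi(S)$ with the \emph{same} value of $r$. Subtracting the two identities gives $\#E\phi(S) - \#ES = \#V\phi(S) - \#VS$, and Proposition~\ref{RVW--Prop--cap - deg} rewrites the right-hand side as $\cp_\Gamma(C) - \deg_\Gamma(m)$.

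Next I would establish the second assertion of part~(2), that $\#E_x S = \#E_x\phi(S)$ for every $x \neq m$; this is the technical heart of the argument. The first point is that the passage from $S$ to $\phi_{\aux}(S)$ neither creates nor destroys any edge with label $x \neq m$: such an edge is merely ``unhooked'' and reattached to a new endpoint with its label unchanged, while the only genuinely new edges are the auxiliary $m$-edges (and no old $m$-edge is moved, since neither $m$ nor $m^{-1}$ lies in $C - m$). Hence $\#E_x\phi_{\aux}(S) = \#E_x S$ for $x \neq m$. It then remains to check that the cyclic reduction carrying $\phi_{\aux}(S)$ to $\phi(S)$ deletes only $m$-edges. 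By the leaf characterization, every leaf of $\phi_{\aux}(S)$ is either an auxiliary vertex $v_{\aux}$ with $\hl(v) \subseteq C'$ or a vertex $\phi_{\aux}(v)$ with $\hl(v) \subseteq C - m$, and in each case the unique incident edge is the $m$-labelled auxiliary edge; so the first round of deletions removes $m$-edges only. The hard part will be ruling out a cascade, namely showing that these deletions do not expose new leaves incident to $x$-edges. Using the hyperlink formulas, I would check that the vertex at the far end of each deleted auxiliary edge has degree at least $\deg_S(v) + 1$ beforehand, and that no surviving vertex is adjacent to more than one initial leaf, so that each such vertex loses at most one edge. Because $S$ is cyclically reduced we have $\deg_S(v) \geq 2$, so every surviving vertex still has degree at least $2$ after the first round; the reduction therefore terminates immediately and removes $m$-edges exclusively, giving $\#E_x\phi(S) = \#E_x\phi_{\aux}(S) = \#E_x S$.

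Finally, the first assertion of part~(2) follows by bookkeeping. Writing $\#E = \sum_{x \in X} \#E_x$ and using the equalities $\#E_x\phi(S) = \#E_x S$ for all $x \neq m$ just established, the total edge change collapses to the $m$-edge change, so $\#E_m\phi(S) - \#E_m S = \#E\phi(S) - \#ES$, which equals $\cp_\Gamma(C) - \deg_\Gamma(m)$ by part~(1).
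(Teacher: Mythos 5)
Your proposal is correct and follows essentially the same route as the paper: part (1) via the Euler-characteristic relation $\#E = \#V - 1 + R$ combined with Proposition~\ref{RVW--Prop--cap - deg}, and part (2) via the local construction of $\phi_{\aux}(S)$, in which the only new edges are auxiliary $m$-edges and every leaf has an auxiliary $m$-edge as its unique incident edge. Your explicit verification that cyclic reduction terminates after one round of leaf deletions (each far endpoint retains degree at least $2$, no vertex adjacent to two leaves) is a careful elaboration of what the paper compresses into the single assertion that the only leaves arising after subdivision and folding have hyperlink $\{m\}$.
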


\begin{proof}
  Let $S$ represent the conjugacy class $H^{\Aut F(X)}$.
  Since $S$ is connected, we have the well-known relation $\#ES = \#VS - 1 + R$, where $R$ is the rank of $H$ as a free group.
  Since $\phi(S)$ represents the class $\phi(H)^{\Aut F(X)}$ and $\phi(H)$ must also have rank $R$, we then have $\#E\phi(S) = \#V\phi(S) - 1 + R$, and part 1 follows immediately.

  Part 2 follows from the ``local'' version of the construction of $\phi(S)$.
  The only positive edges introduced in the subdivision stage have label $m$, and the only leaves which arise after subdivision and folding are leaves with hyperlink $\{m\}$.
  Therefore, the only positive edges added or removed in the application of $\phi$ to $S$ are those labeled $m$.
\end{proof}

We will recast Gersten's version of Whitehead's algorithm in graph-theoretic terms first seen in \cite{Kalajdvzievski1992} and used later in \cite{Roig2007} to analyze the complexity of the Whitehead reduction process.

Let $S$ be a connected, cyclically reduced $X$-digraph and let $\phi \in \Aut F(X)$.
We call $\phi(S)$ an \emph{automorphic image} of $S$.
We say that $\phi$ \emph{reduces} $S$ if $\#VS < \#V\phi(S)$ (or equivalently, $\#ES < \#E\phi(S)$), and that $\phi$ \emph{expands} $S$ if $\#VS > \#V\phi(S)$ (or equivalently, $\#ES > \#E\phi(S)$).
Where $S$ is clear from context, we will say that $\phi$ is \emph{reducing} or \emph{expanding}.
If the number of edges of $S$ is minimal among all its automorphic images, then we say that $S$ is \emph{minimal}.

\begin{thm}[Whitehead's Theorem \cite{Gersten1984}]
  Let $S$ be a connected, cyclically reduced $X$-digraph.
  \begin{enumerate}
    \item
      If $S$ is not minimal, then some Whitehead automorphism reduces $S$.
    \item
      Let $S$ be minimal, and suppose there is $\phi \in \Aut F(X)$ such that $\phi(S)$ is also minimal.
      Then there exists a sequence of type II Whitehead automorphisms $\phi_1, \dots, \phi_k$ such that $\phi_i$ does not expand $\phi_{i-1} \circ \dots \circ \phi_1(S)$ and $\phi_k \circ \dots \circ \phi_1 (S) = \phi(S)$.
  \end{enumerate}
\end{thm}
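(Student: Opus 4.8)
The plan is to derive both parts from a single combinatorial input, the \emph{Peak Reduction Lemma}, using the capacity--degree formula of Proposition~\ref{Solie--Prop--(C,m) only changes m edge count} as the computational engine. Throughout I measure complexity by $\#ES$, so that reducing means strictly lowering $\#E$ and expanding means strictly raising it. Two preliminary reductions clear the ground. First, since type I Whitehead automorphisms merely permute and invert the labels of $X^\pm$, they induce a bijection of edge sets and hence preserve $\#E$; consequently every change in complexity is caused by a type II factor, and it suffices to work with chains of type II automorphisms. Second, the Whitehead automorphisms generate $\Aut F(X)$, so any automorphism carrying $S$ to an image $T$ can be written as a product $\phi_k\circ\cdots\circ\phi_1$ of Whitehead automorphisms, giving a \emph{chain} $S=R_0,R_1,\dots,R_k=T$ with $R_i=\phi_i(R_{i-1})$ and an associated complexity sequence $\#ER_0,\dots,\#ER_k$.

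The heart of the argument is the Peak Reduction Lemma: if $R\xrightarrow{\alpha}T$ and $T\xrightarrow{\beta}T'$ are consecutive type II moves with $\#ET\ge\#ER$ and $\#ET\ge\#ET'$, so that $T$ is a local maximum of the chain, then $R$ and $T'$ can be joined by a new chain of type II automorphisms every intermediate term of which has complexity at most $\max(\#ER,\#ET')$, and strictly below $\#ET$ when $T$ is a strict maximum. To prove it I would run a case analysis on the multipliers $a$ and $b$ of the two automorphisms $\alpha^{-1}=(A,a)$ and $\beta=(B,b)$ that reduce $T$ (the inverse of a type II automorphism is again type II), the decisive tool being submodularity of the hypergraph cut function, $\cp_\Gamma(Y)+\cp_\Gamma(Z)\ge\cp_\Gamma(Y\cap Z)+\cp_\Gamma(Y\cup Z)$, which holds because each hyperedge contributes a submodular crossing indicator. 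Rewriting every complexity change as a capacity via $\#E\phi(S)-\#ES=\cp_\Gamma(C)-\deg_\Gamma(m)$, submodularity lets me replace the cuts $A$ and $B$ by the combinations $A\cap B$ and $A\cup B$ (and their letter-adjusted variants when $a$ and $b$ interact), and these combinations realize intermediate automorphisms whose capacities sum to strictly less than that of the peak. This lemma is where essentially all the difficulty lies; the multiplier bookkeeping, especially the split between the subcases $b\in\{a,a^{-1}\}$ and $b\notin\{a,a^{-1}\}$, is the main obstacle.

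Part 2 then follows by a descent on chains. To the chain $S=R_0,\dots,R_k=\phi(S)$ I attach the well-founded measure given by the nonincreasing rearrangement of its complexity sequence, ordered lexicographically (equivalently, the pair consisting of the maximal complexity attained and the number of indices attaining it). Each application of the Peak Reduction Lemma at an interior local maximum strictly lowers this measure, so after finitely many applications I reach a chain with no interior local maximum. Because both endpoints $S$ and $\phi(S)$ are minimal, a chain with no interior local maximum must be nonincreasing, and since it begins and ends at the minimal value it is in fact constant; hence every $\phi_i$ is a type II automorphism that does not expand $R_{i-1}$, as required.

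For Part 1, suppose $S$ is not minimal, choose an automorphic image $T_0$ of minimal complexity, and apply the procedure above to obtain a nonincreasing type II chain $S=R_0,\dots,R_k=T_0$ with $\#ET_0<\#ES$. Such a chain must strictly decrease at some step, but I need the \emph{first} step to decrease in order to exhibit a single reducing automorphism for $S$. This requires removing the initial plateau of complexity-preserving moves: applying the lemma in its ``valley'' orientation to the first descent, one pulls a strictly reducing move to the front, producing a type II automorphism $\phi_1$ with $\#E\phi_1(S)<\#ES$. The plateau bookkeeping here is delicate but routine once the Peak Reduction Lemma is in hand, and it yields the desired reducing Whitehead automorphism, completing the proof.
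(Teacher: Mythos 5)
The paper does not actually prove this theorem: it is quoted as background with a citation to \cite{Gersten1984}, so the only fair comparison is with the classical peak-reduction proof (Whitehead, Rapaport, Higgins--Lyndon for words; Gersten for graphs), which is indeed the route you chose. The skeleton of your argument matches the literature, but as a proof it has a genuine gap, and in fact two intertwined ones, both located exactly where you say ``essentially all the difficulty lies.'' First, the Peak Reduction Lemma is announced rather than proved, and submodularity of the cut function, $\cp_\Gamma(Y)+\cp_\Gamma(Z)\ge\cp_\Gamma(Y\cap Z)+\cp_\Gamma(Y\cup Z)$, is not by itself sufficient: beyond the capacity estimate you must check that the combined sets (after the ``letter adjustments'') are legitimate $m$-cuts, track the degree terms $\deg_\Gamma(a)$ and $\deg_\Gamma(b)$ appearing in Proposition \ref{Solie--Prop--(C,m) only changes m edge count}, and --- crucially --- verify identities \emph{in} $\Aut F(X)$ expressing $\beta\circ\alpha$ as the product of the proposed replacement automorphisms, since it is not enough that some cheaper cuts exist; they must compose to the same automorphism. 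The interacting cases ($b\in\{a,a^{-1}\}$, or $a^{-1}\in B$, etc.) are resolved in the classical proofs by explicit Whitehead-automorphism identities, not by cut estimates, and none of that is supplied here.

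Second, the version of the lemma you state is too weak to drive your own descent. You guarantee intermediate complexities ``at most $\max(\#ER,\#ET')$,'' with strictness only when $T$ is a \emph{strict} local maximum. At a weak peak ($\#ER=\#ET>\#ET'$, say) your replacement chain may attain the value $\#ET$ arbitrarily many times, since its length is uncontrolled, so your lexicographic measure (maximum value, number of indices attaining it) need not decrease, and the induction stalls; note also that under your weak definition a constant chain between minimal endpoints consists entirely of ``interior local maxima,'' contradicting your claim to reach a chain with none. The classical lemma is stronger in precisely the needed way: whenever $\#ER\le\#ET\ge\#ET'$ with \emph{at least one} strict inequality, the replacement chain lies strictly below $\#ET$ throughout. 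With that form, your Part 2 descent works, and Part 1 becomes immediate without your unproved ``valley orientation'' device: in a fully peak-reduced chain from a non-minimal $S$ to a minimal image, a level first step eventually followed by a descent would itself constitute a weak peak, so the first step must already strictly reduce $S$. Finally, a small but real omission: disposing of type I factors requires more than observing that they preserve $\#E$; to end with a product of type II automorphisms carrying $S$ exactly to $\phi(S)$ one must conjugate the type I factors past the type II factors (the conjugate of a type II Whitehead automorphism by a type I is again type II) and account for the residual type I at the end.
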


Let $S$ be a cyclically reduced $X$-digraph.
Let $\min(S)$ denote the set of minimal automorphic images of $S$.
Whitehead's Theorem gives an effective algorithm for constructing $\min(S)$ given $S$.
Let $S$ and $T$ be cyclically reduced $X$-digraphs representing conjugacy classes $H^{\Aut F(X)}$ and $K^{\Aut F(X)}$; then there exists $\phi \in \Aut F(X)$ such that $K = \phi(H)$ if and only if $\min(S) = \min(T)$.
This is Gersten's extension of Whitehead's famous algorithm to finitely generated subgroups.

\begin{thm}[Whitehead's algorithm]
  There is an algorithm to decide, given $H, K \fg F(X)$, whether or not there exists $\phi \in \Aut F(X)$ such that $\phi(H)=K$.
\end{thm}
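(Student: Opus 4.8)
The plan is to convert the decision problem into a comparison of two finite sets of minimal Stallings graphs, using the two parts of Whitehead's Theorem. First I would represent $H$ and $K$ by their cyclically reduced Stallings graphs $S = S_X(H)$ and $T = S_X(K)$. Observe that there is an automorphism $\phi$ with $\phi(H) = K$ if and only if some automorphism carries the conjugacy class of $H$ to that of $K$: if $\phi(H) = g^{-1}Kg$, then composing $\phi$ with conjugation by $g^{-1}$ (an inner automorphism) produces an automorphism sending $H$ exactly to $K$. Thus it suffices to decide whether $S$ and $T$ lie in the same $\Aut F(X)$-orbit, and since cyclically reduced Stallings graphs represent conjugacy classes, these are precisely the right objects on which $\Aut F(X)$ acts.

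Next I would minimize. Because $X$ is finite, there are only finitely many Whitehead automorphisms (type I being the permutations and inversions of $X^\pm$, type II being the pairs $(C,m)$ with $m$-cuts $C$). Starting from $S$, I test each Whitehead automorphism to see whether it reduces $S$ in the sense of strictly decreasing the edge count; by part 1 of Whitehead's Theorem, if no such automorphism exists then $S$ is already minimal. Otherwise I apply a reducing automorphism and repeat. Since $\#ES$ is a nonnegative integer that strictly decreases at each step, this process halts at a minimal graph $S_0$, and the same procedure applied to $T$ yields a minimal graph $T_0$.

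The core step is to compute the full set $\min(S)$ of minimal automorphic images. I would do this by a search starting from $S_0$: maintain a list of minimal graphs found so far (recorded up to $X$-isomorphism), and from each, apply every type II Whitehead automorphism that does not expand, adding to the list any newly encountered graph. A non-expanding type II automorphism applied to a minimal graph cannot drop below the minimal edge count, so every graph produced is again minimal and lies in $\min(S)$; conversely, part 2 of Whitehead's Theorem guarantees that every minimal image is reached, since it is obtained from $S_0$ by a sequence of non-expanding type II automorphisms, each intermediate graph being minimal by induction. This search terminates: all graphs in $\min(S)$ share the same edge count, and via the relation $\#ES = \#VS - 1 + R$, where $R$ is the common rank, they share the same vertex count as well, so they range over the finitely many $X$-digraphs of that fixed size up to isomorphism. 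I compute $\min(T)$ analogously from $T_0$.

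Finally, I would compare the two finite lists. Membership in a common $\Aut F(X)$-orbit is an equivalence relation whose classes are determined by their minimal members, so there is a $\phi$ with $\phi(H) = K$ precisely when $\min(S) = \min(T)$, equivalently when $\min(S) \cap \min(T) \neq \emptyset$; this is decidable by checking the finite lists for a common entry. The main obstacle is not any single computation but establishing that the enumeration of $\min(S)$ is simultaneously complete and finite. Completeness rests entirely on part 2 of Whitehead's Theorem, which licenses restricting attention to the finite family of type II Whitehead automorphisms and to non-expanding moves while still reaching every minimal image; finiteness rests on the bounded edge- and vertex-counts forcing $\min(S)$ to sit inside a finite collection of graphs. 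Both facts having been supplied by the earlier results, the remaining work is bookkeeping.
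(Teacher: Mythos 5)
Your proposal is correct and follows essentially the same route as the paper, which likewise reduces the problem to computing the finite sets $\min(S_X(H))$ and $\min(S_X(K))$ via the two parts of Whitehead's Theorem and testing them for equality. Your additional details --- the reduction from conjugacy to equality via inner automorphisms, the termination of minimization by strict descent of edge count, and the finiteness of $\min(S)$ from the fixed edge and vertex counts --- are exactly the bookkeeping the paper leaves implicit.
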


\subsection{Splittings of Free Groups}

Bass-Serre theory broadly concerns the structural implications of free group actions on trees. We will not need the full extent of the theory here, though we record some of the terminology here for completeness. An introduction to the theory may be found in \cite{Serre2003}.

\begin{defn}[Cyclic splitting] \index{cyclic splitting}
  \label{Defn--Cyclic splitting}
  A \emph{cyclic splitting} of $F(X)$ is the decomposition of $F(X)$ as the fundamental group of a graph of groups with cyclic edge groups.
  A \emph{free splitting} of $F(X)$ is the decomposition of $F(X)$ as the fundamental group of a graph of groups with trivial edge groups.
  An \emph{edge map} refers to a homomorphism from an edge group to a vertex subgroup in a particular graph of groups.
  A splitting is \emph{elementary} if the corresponding graph of groups is connected and has exactly one edge.
  An elementary splitting is a \emph{segment splitting} if the underlying graph of groups has two distinct vertices and is a \emph{loop splitting} if it has only one vertex.
  An elementary splitting is \emph{nontrivial} if it is either a loop splitting or a segment splitting in which neither edge map is an isomorphism.
  An elementary cyclic splitting is \emph{very small} if the image of the edge group is maximal cyclic in the vertex subgroup(s).

  We say that $H \fg F(X)$ is \emph{elliptic} in a splitting of $F(X)$ if $H$ is conjugate to a subgroup of a vertex subgroup.
  Subgroups which are not elliptic in a given splitting are said to be \emph{hyperbolic}.
\end{defn}

\begin{prop}
  \label{Solie--Prop--Characterization of vertex subgroups}
  The vertex subgroups in a nontrivial, very small, elementary cyclic segment splitting of $F(X)$ have the form
  \begin{equation*}
    \langle A, b \rangle \text{ and } \langle B \rangle,
  \end{equation*}
  where $A \sqcup B$ is a basis for $F(X)$, $\#A \geq 1, \#B \geq 2$, and $b \in \langle B \rangle$ is not a proper power.

  The vertex subgroup in a cyclic loop splitting of $F(X)$ has the form
  \begin{equation*}
    \langle U, u^v \rangle,
  \end{equation*}
  where $U \sqcup \{v\}$ is a basis for $F(X)$ and $u \in \langle U \rangle$ is not a proper power.
\end{prop}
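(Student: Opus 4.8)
The plan is to reduce the statement, in both the segment and loop cases, to a single \emph{free-factor lemma} and then to read off the claimed normal forms by elementary free-product manipulations; the genuine content will lie in the lemma, whose proof rests on the freeness of $F(X)$.

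First I would record the translation of the hypotheses. Since $F(X)$ is free, every vertex subgroup is free. Write the edge group as $C=\langle c\rangle$, with edge maps embedding $C$ into each vertex subgroup; ``very small'' says precisely that the image of $c$ is maximal cyclic (not a proper power) in each vertex subgroup, and ``nontrivial'' says the edge group is a \emph{proper} subgroup of each vertex subgroup. The key claim I would then isolate is this: in a segment splitting $F(X)\cong H\ast_{C}K$, the edge group $C$ is a free factor of at least one vertex subgroup, say $H$; in a loop splitting $F(X)\cong H\ast_{C}$ with edge elements $u_1=\alpha(c)$ and $u_2=\beta(c)$, there is a free-factor decomposition $H=H_0\ast\langle u_2\rangle$ with $u_1\in H_0$ (or the same with the roles of $u_1,u_2$ swapped).

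Granting the claim, the normal forms follow cleanly. In the segment case, write $H=H_0\ast\langle c\rangle$ and choose a basis $A$ of the free group $H_0$. The amalgamation reabsorbs the free factor $\langle c\rangle$ into $K$ via the identification $c=\beta(c)=:b$, so that
\[
F(X)=\bigl(H_0\ast\langle c\rangle\bigr)\ast_{\langle c\rangle}K=H_0\ast K .
\]
Choosing a basis $B$ of the free group $K$, the set $A\sqcup B$ is a basis of $F(X)$ with $K=\langle B\rangle$ and $H=\langle A,b\rangle$, where $b\in\langle B\rangle$; nontriviality forces $\#A\ge 1$ (else $H=\langle c\rangle$) and $\#B\ge 2$ (else $K=\langle b\rangle$), while very smallness gives that $b$ is not a proper power in $\langle B\rangle$. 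In the loop case, writing $H=H_0\ast\langle u_2\rangle$ with $u_1\in H_0$, the defining relation $t u_1 t^{-1}=u_2$ makes the generator $u_2$ redundant, so with $v:=t$ we obtain $F(X)=\langle H_0,v\rangle=\langle U,v\rangle$ for a basis $U$ of $H_0$. Then $U\sqcup\{v\}$ is a basis, $u:=u_1\in\langle U\rangle$ is not a proper power, and $H=\langle U,u^{v}\rangle$, exactly as required.

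The hard part is the free-factor lemma, and this is precisely where freeness of $F(X)$ must enter: being very small and one-edged does \emph{not} by itself force a free factor, as the genus-two surface group $\langle a_1,a_2\rangle\ast_{[a_1,a_2]=[b_1,b_2]}\langle b_1,b_2\rangle$ shows (both edge images are maximal cyclic, yet neither is a free factor). I would therefore realize the splitting as a graph of spaces---two roses joined by an annulus in the segment case, or a single rose with an annulus attached along two of its loops in the loop case---with fundamental group $F(X)$, and exploit that a space with free fundamental group carries no essential closed surface. Concretely, I would build an $F(X)$-equivariant map from the Bass--Serre tree of a \emph{free} splitting of $F(X)$ to the Bass--Serre tree of the given cyclic splitting, put it in reduced form by Stallings folds, and argue that the only folds which can manufacture the cyclic edge stabilizer from trivial ones are those wrapping an arc into a loop reading $c$---the picture of $\langle c\rangle$ sitting as a free factor on one side. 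The main obstacle I anticipate is controlling this folding sequence sharply enough to guarantee a genuine free factor rather than merely a maximal cyclic subgroup; here I expect to combine the maximal-cyclic hypothesis with the Euler-characteristic count ($\operatorname{rk}H+\operatorname{rk}K=\operatorname{rk}F(X)+1$ for segments, $\operatorname{rk}H=\operatorname{rk}F(X)$ for loops) to exclude the surface-type configurations and pin down the stated forms. Alternatively, one may invoke the classical classification of $\mathbb{Z}$-splittings of free groups to obtain the lemma outright.
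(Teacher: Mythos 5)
Your proposal is correct, and it is in fact considerably more detailed than the paper's own proof, which consists of a single citation: the proposition is declared ``a straightforward application'' of a lemma of Bestvina--Feighn, with Shenitzer, Stallings and Swarup cited as containing similar results. You reconstruct that route from the older ingredients: your ``free-factor lemma'' is precisely the Shenitzer (amalgam) and Swarup (HNN) classification of cyclic splittings of free groups, and you then carry out by hand the reassembly the paper leaves unwritten --- collapsing $(H_0\ast\langle c\rangle)\ast_{\langle c\rangle}K=H_0\ast K$, eliminating $u_2$ via the stable letter, and checking that nontriviality forces $\#A\ge 1$ and $\#B\ge 2$ while very smallness (maximal cyclic $=$ root-closed in a free group) yields the non-proper-power conditions. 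This bookkeeping is right up to two harmless normalizations you should make explicit: the classical lemma produces the edge element only up to conjugacy in the vertex group (absorb the conjugator into the stable letter, or recall that vertex groups of a splitting are themselves only defined up to conjugacy), and under the convention $u^v=v^{-1}uv$ your relation $tu_1t^{-1}=u_2$ gives the stated form only after taking $v=t^{-1}$. The one soft spot is your folding proof of the free-factor lemma, which you concede is incomplete as sketched; but since your stated fallback --- invoking the classical classification outright --- is exactly what the paper does, this leaves no genuine gap, and your genus-two surface example is a genuinely useful sanity check that freeness of $F(X)$, not very smallness, is what forces the free factor. A final remark in your favor: very smallness really is needed for the loop-case conclusion even though the proposition's second sentence omits the hypothesis; for instance, $F(a,t)$ is an HNN extension of $\langle a, ta^{2}t^{-1}\rangle$ over $\mathbb{Z}$ with edge elements $a^{2}$ and $ta^{2}t^{-1}$, and this vertex group is not root-closed in $F(a,t)$ (it contains $(tat^{-1})^{2}$ but not $tat^{-1}$), hence is not of the form $\langle U,u^{v}\rangle$ with $u$ not a proper power, since all such subgroups are root-closed; so your tacit use of the hypothesis is correct and necessary.
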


\begin{figure}
  \begin{center}
    \subfigure[A standard segment vertex subgroup.]{\includegraphics[scale=0.75]{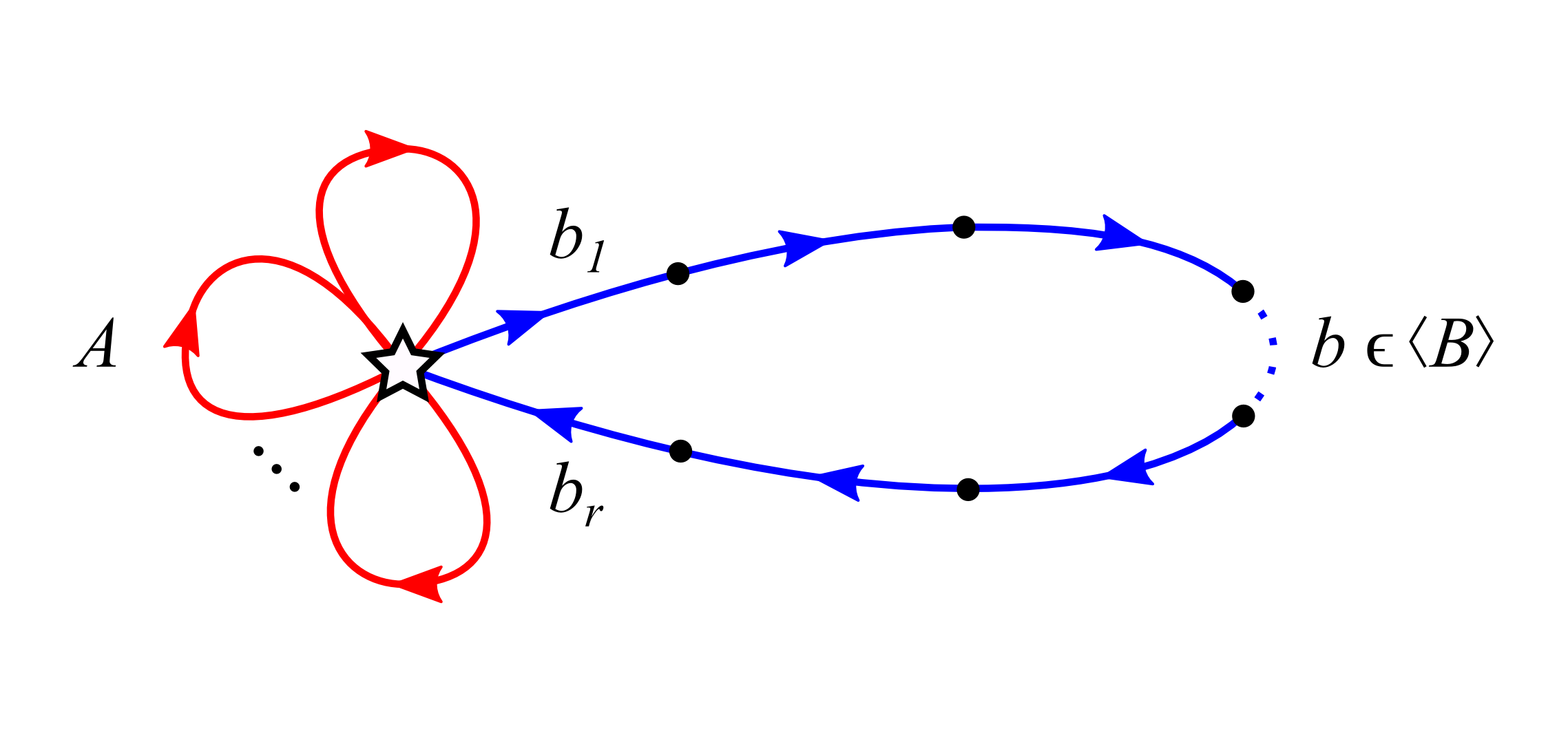}}
    \subfigure[A standard loop vertex subgroup.]{\includegraphics[scale=0.75]{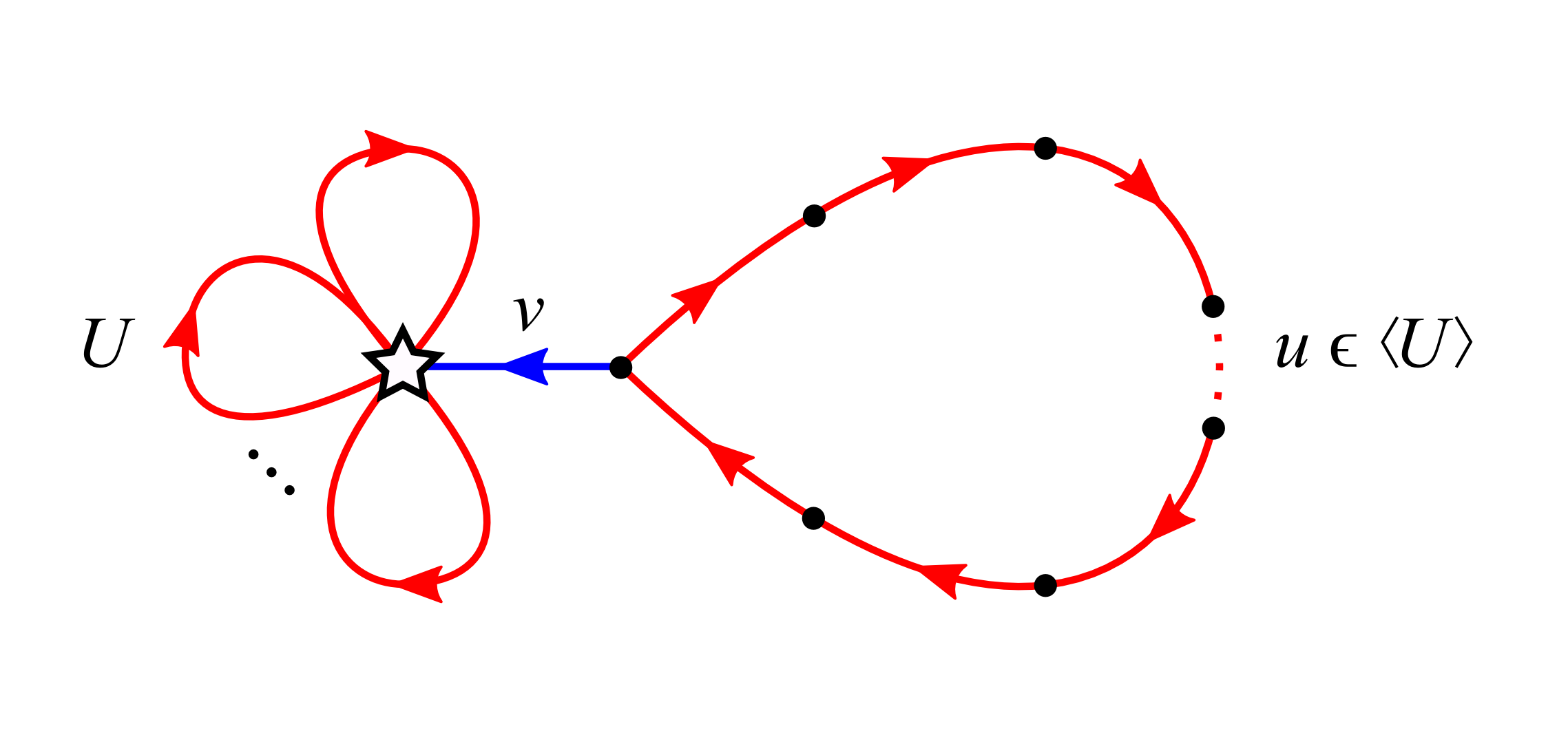}}
    \caption{Stallings graphs of standard vertex subgroups.}
    \label{Fig--Standard vertex subgroups}
  \end{center}
\end{figure}

\begin{proof}
  This is a straightforward application of a lemma of Bestvina-Feighn \cite[Lemma 4.1]{Bestvina1994}.
  Similar results also appear in \cite{Shenitzer1955,Stallings1980,Swarup1986}.
\end{proof}

\begin{defn}[Segment, loop vertex subgroups] \index{vertex subgroup!segment} \index{vertex subgroup!loop}
  \label{Defn--Segment, loop vertex subgroups}
  We call a subgroup $\langle A, b \rangle$  as in Proposition \ref{Solie--Prop--Characterization of vertex subgroups} a \emph{segment vertex subgroup}.
  A subgroup $\langle U, u^v \rangle$ is called a \emph{loop vertex subgroup}.
  When $A \sqcup B = X$ or $U \sqcup \{v\} = X$, we say that these vertex subgroups are \emph{standard}.
  By $\mathcal{SV}$ and $\mathcal{LV}$ we denote the sets of standard segment and standard loop vertex subgroups, respectively.
\end{defn}

If $H$ is a segment vertex subgroup, then for the automorphism $\phi$ induced by a bijection $A \sqcup B \rightarrow X$, $\phi(H) \in \mathcal{SV}$.
Thus, every segment vertex subgroup has an automorphic image in the set $\mathcal{SV}$.
Likewise, every loop vertex subgroup has an automorphic image in $\mathcal{LV}$ and every proper free factor has an automorphic image in $\mathcal{SF}$.

Let $H \fg F(X)$ be a proper free factor of $F(X)$.
If $H = \langle Y \rangle$ where $Y \subset X$, then we say that $H$ is a \emph{standard free factor}.
By $\mathcal{SF}$ we denote the set of standard free factors of $F(X)$.
Note that every standard free factor is a subgroup of a standard loop vertex subgroup.

\section{Main Results}

For convenience, we define the following general problem.

\begin{defn}[Automorphic orbit problem] \index{Automorphic orbit problem}
  \label{Defn--Automorphic orbit problem}
  Let $\mathcal{K}$ be a (possibly infinite) collection of subgroups of $F(X)$. The \emph{automorphic orbit problem}, denoted $\AO(\mathcal{K})$, is the problem:
  \bigskip
  \begin{center}
    \parbox[c]{4in}{Given $H \fg F(X)$, do there exist $K \in \mathcal{K}$ and $\phi \in \Aut F(X)$ such that $\phi(H) \leq K$?}
  \end{center}
  \bigskip
\end{defn}

In the case where $\mathcal{H}$ consists of a single cyclic subgroup, $\AO(\mathcal{H})$ is solved by Whitehead's algorithm.

\subsection{Proper Free Factors}

Recall that $\mathcal{SF}$ is the set of standard free factors of $F(X)$, and that $H \fg F(X)$ is contained in a proper free factor of $F(X)$ if and only if $H$ has some automorphic image which is a subgroup of a standard free factor.

\begin{prop}
   A subgroup $H \fg F(X)$ is contained in a proper free factor of $F(X)$ if and only if each element of $\min(S_X(H))$ omits some letter of $X$ from its set of edge labels.
\end{prop}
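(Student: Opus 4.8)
The plan is to route both implications through the single mechanism recorded in Proposition~\ref{Solie--Prop--(C,m) only changes m edge count}: a type II Whitehead automorphism with multiplier $m$ alters only the count $\#E_m$ of $m$-edges. First I would translate the statement into edge-count language. A cyclically reduced graph omits a letter $x$ exactly when $\#E_x = 0$, and such a graph represents a conjugacy class lying inside $\langle X - x\rangle$, a standard free factor. Combined with the characterization recalled just before this proposition — that $H$ lies in a proper free factor precisely when some automorphic image is a subgroup of a standard free factor — this settles the easy direction at once: if every element of $\min(S_X(H))$ omits a letter, pick one such minimal image $\phi(S_X(H))$; it omits some $x$, so $\phi(H)$ is conjugate into $\langle X - x\rangle$ and therefore $H$ lies in a proper free factor.

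For the converse I would argue in two steps. Suppose $H$ lies in a proper free factor, so that some automorphic image $\phi_0(S_X(H))$ has $\#E_x = 0$. The claim is that this omission survives reduction to a minimal image. Indeed, by Proposition~\ref{Solie--Prop--(C,m) only changes m edge count} a reducing Whitehead automorphism with multiplier $m$ strictly decreases $\#E_m$ and fixes every other edge count; but if the current graph has $\#E_x = 0$, then $\deg_\Gamma(x) = 0$, so for a move with multiplier $x$ the edge-count change $\cp_\Gamma(C) - \deg_\Gamma(x) = \cp_\Gamma(C) \geq 0$ cannot be negative. Hence no reducing move can take $x$ as its multiplier, and every admissible reducing step fixes $\#E_x$. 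The omission of $x$ therefore propagates through any reduction of $\phi_0(S_X(H))$ down to a minimal image $S_1$, so at least one element of $\min(S_X(H))$ omits $x$.

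To upgrade ``some'' to ``every'' I would show that the vector of edge counts $(\#E_x)_{x \in X}$ is constant across $\min(S_X(H))$. Fixing a minimal representative $S_0$, part~2 of Whitehead's Theorem connects $S_0$ to any other minimal image through type II Whitehead automorphisms, none of which expands; since both endpoints are minimal, every intermediate graph is minimal and each move preserves the \emph{total} edge count. A move with multiplier $m$ preserves all $\#E_x$ with $x \neq m$ by Proposition~\ref{Solie--Prop--(C,m) only changes m edge count}, and preservation of the total then forces $\#E_m$ to be preserved as well; hence every individual edge count is invariant along the path. Consequently $S_1$ and each element of $\min(S_X(H))$ share the same edge counts, and since $S_1$ omits $x$, so does every element of $\min(S_X(H))$.

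The step I expect to be the main obstacle is the middle one: controlling the omitted letter during reduction. The naive move is to reduce inside $\Aut F(X-x)$, but a graph minimal in $F(X-x)$ need not be minimal in $F(X)$, so this does not obviously reach $\min(S_X(H))$. The clean resolution is the nonnegativity-of-capacity argument above, which shows directly that no $F(X)$-reduction can reintroduce the omitted letter. Once Proposition~\ref{Solie--Prop--(C,m) only changes m edge count} is leveraged both to forbid $x$ as a multiplier during reduction and to force invariance of the per-letter edge counts among minimal graphs, the two implications close up.
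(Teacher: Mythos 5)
Your easy direction and your middle step are essentially the paper's own proof: the paper likewise observes, via Proposition \ref{Solie--Prop--(C,m) only changes m edge count}, that a type II Whitehead automorphism $(C,m)$ whose multiplier $m$ is omitted from the edge labels changes only $\#E_m$, so it cannot reduce; hence the omitted letter survives greedy reduction and some element of $\min(S_X(H))$ omits a letter. The gap is in your final step. The claim that the vector $(\#E_x)_{x \in X}$ is constant across $\min(S_X(H))$ is false: for $H = \langle a \rangle \fg F(a,b)$, the set $\min(S_X(H))$ contains both the single-edge $a$-loop and the single-edge $b$-loop (each is an automorphic image of the other), with edge-count vectors $(1,0)$ and $(0,1)$. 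Your derivation leans on the paper's statement of Whitehead's Theorem, part 2, which as written connects minimal images by type II automorphisms alone; but that statement cannot be taken literally --- the same one-loop example shows that no non-expanding chain of type II moves joins the $a$-loop to the $b$-loop --- and the correct peak-reduction statement (Gersten, Higgins--Lyndon) allows type I automorphisms in the connecting chain. A type I automorphism permutes the coordinates of $(\#E_x)_{x \in X}$, so the vector is invariant only up to permutation, not on the nose.

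Fortunately the permutation-invariant content is all you need, and it is how the paper closes the argument: along a non-expanding chain between minimal graphs, a type I move permutes the per-letter counts (so it preserves the property ``omits some letter''), while, exactly as you computed, a type II move with multiplier $m$ preserves $\#E_x$ for $x \neq m$ and hence --- since the total stays at the minimum --- preserves $\#E_m$ as well. So no move in the chain can pass from a graph omitting some letter to a graph realizing every letter. The paper packages this as a contradiction at the least index $k$ for which every letter of $X$ appears among the labels of $S_k$: the move $\phi_k$ would have to be type II with multiplier the letter $x$ missing from $S_{k-1}$, and since $S_{k-1}$ has no $x$-edges, $\cp_{\Gamma}(C) - \deg_{\Gamma}(x) = \cp_{\Gamma}(C) \geq 0$ forces $\phi_k$ to be non-reducing, indeed expanding if any $x$-edge is created. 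Replace your invariance claim with this weaker statement (the multiset of edge counts, or simply the predicate ``some letter is omitted,'' is constant across $\min(S_X(H))$) and your proof closes, coinciding with the paper's.
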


\begin{proof}
  Let $H \fg F(X)$.
  Let $Y$ be a basis for $F(X)$ such that $H \leq \langle Y' \rangle$ for some $Y' \subset Y$.
  Let $\phi \in \Aut F(X)$ be induced by a bijection $Y \rightarrow X$, so that $\phi(Y'):= X' \subset X$.
  The graph $S=S_X(\phi(H))$ therefore is in the automorphic orbit of $S_X H$ and omits some element $m \in X$ as an edge label.

  Let $\psi=(C,m)$ be a Whitehead automorphism, where $S$ omits $m$ as an edge label.
  Proposition \ref{Solie--Prop--(C,m) only changes m edge count} states that applying $\psi$ to $S$ changes only the number of positive edges labeled $m$, and so $\psi$ cannot be reducing.
  
  We conclude that any reducing Whitehead automorphism for $S$ must have a multiplier which occurs as an edge label in $S$.
  Therefore if the $X$-digraph $S$ omits $m$ as an edge label, $S$ can be minimized by a sequence of Whitehead automorphisms, each with multiplier different from $m$.
  After minimizing, we see that some $S_0 \in \min(S_X(H))$ must therefore omit some letter of $X$ from its set of edge labels.

  The second part of Whitehead's algorithm states that \emph{every} element of $\min(S_X(H))$ can be accessed from $S_0$ by a sequence of non-expanding Whitehead automorphisms.
  This implies that all elements of $\min(S_X(H))$ must omit at least one element of $X$ from its set of edge labels.

  Suppose otherwise; let $S' \in \min(S_X(H))$ have all letters of $X$ appear as edge labels.
  There must be a sequence of Whitehead automorphisms $\phi_1, ..., \phi_n$ such that $S' := S_n = \phi_n \circ ... \circ \phi_1(S_0)$ and $\phi_{i+1}$ does not expand $S_i:=\phi_i \circ ... \circ \phi_1(S_0)$ for $i=1,2,...,n$.
  However, since $S'$ has all elements of $X$ appearing as edge labels, there must be a least index $k$ such that all letters of $X$ appear as labels of $S_{k}$.
  Suppose that $S_{k-1}$ omits the letter $x \in X$ from its set of edge labels.
  As $\phi_{k}(S_{k-1}) = S_k$ and $x$ appears in the edge labels of $S_k$, the Whitehead automorphism $\phi_k$ must be type II with multiplier $x$.
  Since applying $\phi_k$ changes only the number of edges labelled $x$ and $S_{k-1}$ has no such edges, $\phi_k$ must be expanding, a contradiction.

  Conversely, it is straightforward to see that if some (every) element of $\min(S_X(H))$ omits a letter from $X$, then $H$ is contained in a proper free factor.
\end{proof}

The following result is well-known, and we use the above machinery to prove it for completeness.

\begin{cor}
  The problem $\AO(\mathcal{SF})$ is decidable.

\end{cor}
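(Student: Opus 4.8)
The plan is to reduce $\AO(\mathcal{SF})$ to a single finite, decidable check on $\min(S_X(H))$, leaning entirely on the Proposition just established together with the effectiveness of Whitehead's algorithm.

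First I would unpack the problem statement. By Definition \ref{Defn--Automorphic orbit problem}, $\AO(\mathcal{SF})$ asks whether, given $H \fg F(X)$, there exist a standard free factor $K \in \mathcal{SF}$ and $\phi \in \Aut F(X)$ with $\phi(H) \leq K$. As recorded at the opening of this subsection, such a pair exists precisely when $H$ is contained in some proper free factor of $F(X)$: since every proper free factor has an automorphic image in $\mathcal{SF}$, the existence of an automorphic image of $H$ inside a standard free factor is equivalent to the existence of an automorphic image of $H$ inside an arbitrary proper free factor, which is in turn equivalent to $H$ itself lying in a proper free factor. Next I would apply the preceding Proposition, which characterizes containment of $H$ in a proper free factor by the purely graph-theoretic condition that each element of $\min(S_X(H))$ omit some letter of $X$ from its set of edge labels. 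The proof of that Proposition moreover shows that this property, once it holds for one element of $\min(S_X(H))$, holds for all of them, so it suffices to inspect a single minimal automorphic image.

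Finally, I would invoke Whitehead's Theorem, which furnishes an effective procedure to construct the finite set $\min(S_X(H))$ from $S_X(H)$: starting from $S_X(H)$ one repeatedly applies reducing Whitehead automorphisms until a minimal graph is reached, and then explores the non-expanding type II moves to enumerate the remainder of the set. Having computed $\min(S_X(H))$, the algorithm selects any one element and tests whether its set of edge labels is a proper subset of $X$, answering ``yes'' if so and ``no'' otherwise. There is no genuinely hard step here, since all of the mathematical content is already carried by the Proposition and by the effectiveness of Whitehead reduction; the only point meriting care is the remark that the omission condition may be verified on a single minimal representative rather than on every element of $\min(S_X(H))$, which is exactly what the proof of the Proposition supplies.
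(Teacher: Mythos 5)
Your proposal is correct and follows essentially the same route as the paper: both reduce $\AO(\mathcal{SF})$ to the Proposition's ``some (every) element of $\min(S_X(H))$ omits a letter'' criterion and then apply the effectiveness of Whitehead reduction, checking a single minimal representative. The only cosmetic difference is that the paper's algorithm constructs just one element $T \in \min(S_X(H))$ rather than the whole finite set, and it additionally remarks that inverting the reducing sequence solves the associated search problem, which your argument omits but was not required for decidability.
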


\begin{proof}
  The subgroup $H$ is contained in a proper free factor if and only if there exist $\phi \in \Aut F(X)$ and $K \in \mathcal{SF}$ such that $\phi(H) \leq K$.
  However, $\phi(H) \leq K \in \mathcal{SF}$ if and only if some (every) element of $\min(S_X(\phi(H)))$ omits an element of $X$ as an edge label.
  Since $\min(S_X(\phi(H))) = \min(S_X(H))$, our algorithm is as follows.

  \begin{alg}
    Given $H \fg F(X)$, we may determine whether or not $H$ is contained in a proper free factor of $F(X)$ as follows:
    \begin{enumerate}
      \item
        Construct the finite graph $S_X(H)$.
      \item
        Construct an element $T$ of $\min(S_X(H))$.
      \item
        Determine whether $T$ omits some element of $X$ as an edge label.
        \begin{enumerate}
          \item If $T$ omits some element of $X$ as an edge label, conclude that $H$ is contained in a proper free factor of $F(X)$.
          \item Otherwise, conclude that $H$ is contained in no proper free factor of $F(X)$.
        \end{enumerate}
    \end{enumerate}
  \end{alg}

  Note that by inverting the specific sequence of Whitehead automorphisms used to construct the minimal element $T$ and applying it to the standard free factor containing $T$, we may construct the explicit free factor of $F(X)$ containing $H$. This solves the associated search problem.
\end{proof}

\subsubsection{Segment vertex subgroups in higher rank}

Let $F(X)$ be a free group of rank at least three.

Recall that $\mathcal{SV}$ is the set of subgroups of $F(X)$ of the form $\langle A, b \rangle$ where $A \sqcup B = X$, $\#A \geq 1$, $\#B \geq 2$, and $b \in \langle B \rangle$ is not a proper power.

Let $S$ be an $X$-digraph and let $Y \subseteq X$.
The subgraph of $S$ \emph{spanned} by the $Y$-edges is the subgraph consisting of all $Y$-edges and all vertices having an incident $Y$-edge.

\begin{defn}[Property $(S)$] \index{Property $(S)$}
  \label{Defn--Property (S)}
  We say that an $X$-digraph \emph{satisfies property $(S)$} if:
  \begin{enumerate}
    \item
      $S$ is connected and cyclically reduced; and
    \item
      There is a partition $X = A \sqcup B$ such that:
      \begin{enumerate}
        \item
          The subgraph spanned by the $A$-edges is a bouquet of single-edge loops; and
        \item
          The subgraph spanned by the $B$-edges is rank one.
      \end{enumerate}
  \end{enumerate}
\end{defn}

Any element of $\mathcal{SV}$ has a Stallings graph which satisfies Property $(S)$.
We immediately obtain the following.

\begin{prop}
  \label{Solie--Prop--Immersion Property (S)}
  Let $H \fg F(X)$.
  Then $H$ is a subgroup of some element of $\mathcal{SV}$ if and only if $S_X(H)$ admits an immersion into a graph satisfying Property $(S)$.
\end{prop}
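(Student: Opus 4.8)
The plan is to reduce both directions to the standard Stallings correspondence for finitely generated subgroups: for $H,K \fg F(X)$, the subgroup $H$ is conjugate into $K$ if and only if there is a label‑preserving immersion of the (cyclically reduced) Stallings graph $S_X(H)$ into $S_X(K)$. Since the paper forgets basepoints and works with conjugacy classes of cyclically reduced graphs, I read ``$H$ is a subgroup of some element of $\mathcal{SV}$'' up to conjugacy, matching the definition of ellipticity. The forward direction is then essentially immediate: if $H \le K$ with $K = \langle A, b\rangle \in \mathcal{SV}$, the correspondence yields an immersion $S_X(H) \to S_X(K)$, and by the remark preceding the statement $S_X(K)$ satisfies Property $(S)$, so $S_X(H)$ immerses into a graph with Property $(S)$.

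For the converse, let $\iota : S_X(H) \to T$ be an immersion with $T$ satisfying Property $(S)$ for the partition $X = A \sqcup B$. First I would observe that I do not even need $\iota$ to be an immersion here: because $\iota$ is label preserving and the labeling map $S_X(H) \to R_X$ into the rose $R_X$ is $\pi_1$‑injective with image $H$, factoring this map through $T$ shows that $H$ is conjugate into the subgroup $\overline{\pi}(T) := \operatorname{im}\bigl(\pi_1(T) \to F(X)\bigr)$ represented by $T$. It therefore suffices to prove $\overline{\pi}(T) \le K$ for some $K \in \mathcal{SV}$.

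To do this I would exploit the structure of $T$, reading ``bouquet of single‑edge loops'' as a \emph{wedge of loops at a single vertex} $*$. Then every $A$‑edge is a loop at $*$, so $*$ is the only vertex meeting an $A$‑edge. Since $T$ is connected and cyclically reduced, either there are no $B$‑edges (and $\overline{\pi}(T) = \langle A\rangle$) or the rank‑one $B$‑subgraph $T_B$ is connected and contains $*$, giving a wedge decomposition $T = T_A \vee_* T_B$ with $T_A$ the $A$‑bouquet. Hence $\pi_1(T,*) = \pi_1(T_A,*) * \pi_1(T_B,*)$, and passing to images in $F(X)$ gives $\overline{\pi}(T) = \langle A, \overline{\pi}(T_B)\rangle$. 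As $T_B$ has rank one, $\pi_1(T_B,*)$ is infinite cyclic, so $\overline{\pi}(T_B) = \langle c\rangle$ for a single $c \in \langle B\rangle$; writing $c = \tilde b^{\,k}$ with $\tilde b \in \langle B\rangle$ not a proper power yields $\overline{\pi}(T) \le \langle A, \tilde b\rangle$, and $\langle A, \tilde b\rangle \in \mathcal{SV}$ because $A \sqcup B = X$ and $\tilde b \in \langle B\rangle$ is not a proper power.

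The crux, and the main obstacle, is this converse, where two points need care. First, $T$ is only assumed connected and cyclically reduced, \emph{not} folded and \emph{not} a Stallings graph; in particular $T$ need not immerse onto $S_X(K)$, since $T_B$ may have a valence‑three vertex that cannot immerse into the single $b$‑cycle of $S_X(K)$. This is precisely why I compute $\overline{\pi}(T)$ directly rather than composing $\iota$ with a putative map $T \to S_X(K)$. Second, the single‑vertex reading of ``bouquet'' is essential and not cosmetic: if $A$‑loops were permitted at distinct vertices one can build a connected, cyclically reduced $T$ with rank‑one $B$‑subgraph whose $\overline{\pi}(T)$ has rank larger than any element of $\mathcal{SV}$, so the proposition would fail. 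Finally I would dispatch the degenerate partitions ($\#A = 0$ or $\#B < 2$) separately, noting that in those cases $\overline{\pi}(T)$ lies in a proper free factor or a cyclic subgroup, each of which embeds in a standard segment vertex subgroup whenever $\#X \ge 3$.
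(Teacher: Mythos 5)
Your main argument is sound, and in fact it supplies more detail than the paper does: the paper offers no proof at all, treating the proposition as immediate from the observation that Stallings graphs of elements of $\mathcal{SV}$ satisfy Property $(S)$ together with the standard immersion characterization of conjugacy into a subgroup. Your forward direction is exactly that. Your converse is a careful version of what the paper leaves implicit, and your two points of care are both well taken: $T$ need not be folded (so one cannot simply compose with a map $T \to S_X(K)$, and any label-preserving map, immersion or not, already gives $H$ conjugate into $\overline{\pi}(T)$), and the bouquet must sit at a single vertex for the wedge decomposition $\pi_1(T,*) = \pi_1(T_A,*) \ast \pi_1(T_B,*)$ and the conclusion $\overline{\pi}(T) \leq \langle A, \tilde b\rangle$ to go through. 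The up-to-conjugacy reading is also forced: the paper's convention discards basepoints of cyclically reduced graphs, and since $\mathcal{SV}$ is not closed under conjugation the literal statement only makes sense with that convention. (One micro-fix: since $T$ may be unfolded, the $B$-cycle label $c$ can be trivial, in which case take $\tilde b$ to be any letter of $B$.)

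There is, however, a genuine error in your final dispatch of the degenerate partitions. The claim that a cyclic subgroup of $F(X)$ embeds in a standard segment vertex subgroup whenever $\#X \geq 3$ is false: take $c = xyxzxy^{-1}xz^{-1}$ in $F(x,y,z)$. An element lies in $\langle A', b'\rangle = \langle A'\rangle \ast \langle b'\rangle$ only if all of its $\langle B'\rangle$-syllables are powers of the single element $b'$, and for each of the three admissible partitions ($\#A'=1$, $\#B'=2$) the relevant syllables of $c$ ($\{y,z,y^{-1},z^{-1}\}$, $\{x, xzx, xz^{-1}\}$, $\{xyx, xy^{-1}x\}$ respectively) are not powers of a common element, so $\langle c\rangle$ lies in no element of $\mathcal{SV}$. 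Indeed, were your claim true, every element of $F(X)$ would be elliptic in a very small segment splitting, contradicting the existence (and genericity, which this paper itself invokes) of filling elements. The $\#B=1$ case fails similarly: a $z^k$-labeled cycle ($k \geq 2$) through the bouquet vertex gives $\overline{\pi}(T) = \langle X - \{z\}, z^k\rangle$ up to conjugacy, which lies in no element of $\mathcal{SV}$ and in no proper free factor, and for $k=1$ one even gets $\overline{\pi}(T) = F(X)$. So if Property $(S)$ were read literally so as to allow $\#A = 0$ or $\#B = 1$, the proposition itself would be false; the correct resolution is not to argue containment in these cases but to observe that the partition in the definition of Property $(S)$ must implicitly inherit the constraints $\#A \geq 1$ and $\#B \geq 2$ from $\mathcal{SV}$ — a reading confirmed by the paper's subsequent lemma, whose proof refers to ``the unique vertex whose hyperlink meets $A^\pm$'' — which excludes your degenerate cases outright rather than resolving them in the manner you propose.
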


\begin{figure}
  \begin{center}
    \includegraphics{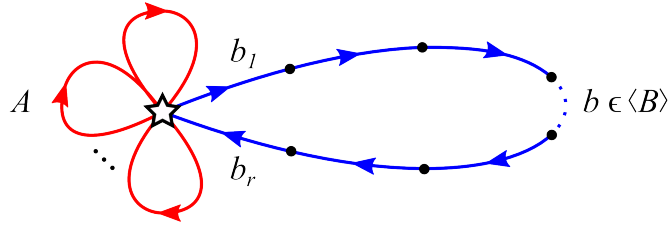}
    \caption{Stallings graph of a standard segment vertex subgroup.}
  \end{center}
\end{figure}

\begin{lem}
  Let $S$ be a connected, cyclically reduced $X$-digraph admitting an immersion onto a graph satisfying property $(S)$.
  Suppose that $S$ represents (the conjugacy class of) a subgroup contained in no proper free factor of $F(X)$.
  If $S$ is not minimal, then some element of $\min(S)$ also admits an immersion onto a graph satisfying property $(S)$.
\end{lem}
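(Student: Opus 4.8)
The plan is to prove the statement in a single reducing step and then induct on the edge count. Concretely, I would establish the following one-step claim: if $S$ is connected, cyclically reduced, immerses onto a property $(S)$ graph, represents a subgroup in no proper free factor, and is not minimal, then there is a \emph{reducing} Whitehead automorphism $\phi$ such that $\phi(S)$ again immerses onto a property $(S)$ graph. Granting this, the hypotheses persist under $\phi$: $\phi(H)$ still lies in no proper free factor, since being contained in a proper free factor is an $\Aut F(X)$-invariant property of $H$; and by Proposition \ref{Solie--Prop--(C,m) only changes m edge count} the quantity $\#E\phi(S)<\#ES$ strictly decreases. Hence iterating terminates at a minimal graph which immerses onto a property $(S)$ graph and therefore lies in $\min(S)$. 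Before starting I would record a consequence of the no-proper-free-factor hypothesis used throughout: by the reasoning of the free factor proposition, every element of $\min(S)$, and in particular $S$ itself, must use every letter of $X$ as an edge label, since any reducing automorphism has its multiplier among the current labels and so $S$ could otherwise be minimized without ever creating an edge with the missing label, landing $H$ in a proper free factor.

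Next I would fix notation for the target. A connected, cyclically reduced property $(S)$ graph $\Sigma$ is forced into the standard shape: all $A$-edges are loops at a single vertex $*$, and, since $\Sigma$ has no leaves and its $B$-subgraph has rank one, the $B$-edges form a single cycle $\mathcal{C}$ through $*$. Thus $\Sigma$ is the Stallings graph of a subgroup $\langle A, b\rangle$ with $b$ read around $\mathcal{C}$, and an immersion $f\colon S \to \Sigma$ onto $\Sigma$ records exactly that $H \leq \langle A, b\rangle \in \mathcal{SV}$, as in Proposition \ref{Solie--Prop--Immersion Property (S)}. Pulling $f$ back, every $A$-edge of $S$ joins vertices of the fibre $W := f^{-1}(*)$, while the $B$-edges of $S$ map to $\mathcal{C}$; local injectivity of $f$ gives $\hl(v) \subseteq \hl(f(v))$ at every vertex $v \in VS$.

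The heart of the argument is to choose a reducing automorphism \emph{adapted} to the partition $X = A \sqcup B$. Whitehead's Theorem supplies a reducing type II automorphism $\phi = (C,m)$ with $m \in X$; I would argue that a reducing automorphism can be taken with cut $C$ compatible with $A \sqcup B$, and then split on the location of $m$. If $m \in A$, the surgery only rearranges edges across an $A$-loop of $\Sigma$, so $\phi(\Sigma)$ is again a property $(S)$ graph for the \emph{same} partition; tracking the local construction of $\phi$ then shows the induced map $\phi(S) \to \phi(\Sigma)$ remains an immersion, and its image is rank one in $B$ (otherwise $\phi(H)$, hence $H$, would lie in a proper free factor), so $\phi(S)$ immerses \emph{onto} a property $(S)$ graph. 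If $m \in B$, then $\phi$ restricts to a Whitehead automorphism of $F(B)$ acting on the cycle $\mathcal{C}$, with image the Stallings graph of $\langle A, b'\rangle$ for a shorter representative $b'$; the $B$-subgraph is again a single cycle of rank one, so property $(S)$ persists even if $b'$ happens to become a proper power.

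The main obstacle is precisely this adaptedness step. An arbitrary reducing $(C,m)$ need not respect the partition, and applying a Whitehead automorphism followed by folding can a priori both mix $A$- and $B$-edges and destroy an immersion. Overcoming this requires a relative peak-reduction argument: one must show that the non-minimality of $S$ can be witnessed by an automorphism preserving the $\mathcal{SV}$-structure of $\langle A, b\rangle$, so that the reduction is ``seen'' inside the vertex subgroup. The delicate point is the single vertex $*$ where the $A$-loops meet the $B$-cycle; I would analyze the hyperlinks over the fibre $W$ using the Whitehead hypergraph $\Gamma(S)$, comparing $\cp_\Gamma(C)$ with $\deg_\Gamma(m)$ for cuts $C$ that separate the $A^\pm$-labels from the cycle, and show that whenever some cut reduces $S$, an adapted cut does so as well. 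Verifying that the fold in the construction of $\phi(S)$ descends to a fold in $\phi(\Sigma)$, so that immersion is preserved, is the remaining careful but routine check.
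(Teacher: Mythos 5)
Your overall scaffolding (iterate reducing Whitehead moves, strict decrease of $\#ES$ via Proposition \ref{Solie--Prop--(C,m) only changes m edge count}, invariance of the no-proper-free-factor hypothesis) is sound, but the proof hinges entirely on your one-step claim, and that claim is precisely what fails in general. You acknowledge the adaptedness problem but propose to resolve it by showing that ``whenever some cut reduces $S$, an adapted cut does so as well,'' i.e.\ that a \emph{single} reducing $(C,m)$ can always be chosen whose image again immerses onto a property $(S)$ graph. In the hard subcase this is not available. With $b = b_1 \cdots b_r$ the label of the $B$-cycle and $\Delta = (A^\pm \cap C) - \{m, m^{-1}, b_1^{-1}, b_r\}$, suppose $m \in B^\pm$, $b_r \in C$, $b_1^{-1} \in C'$, and $[\Delta, b_r; \Sigma \cup \Omega \cup \{b_1^{-1}, m^{-1}\}]_{\Gamma(S)} > [\Delta, \Sigma \cup \{b_1^{-1}\}; b_r]_{\Gamma(S)}$. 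Then the adapted reducing move one can extract is $\phi_r = (\Delta \cup b_r, b_r)$, which is reducing but almost never preserves the property: its effect is to drag the $A$-loops corresponding to $\Delta$ off the basepoint onto an interior vertex of the $b$-cycle, so $\phi_r(H)$ lies in the segment vertex subgroup $\langle \phi_r(A), b \rangle$, which need not be contained in (or conjugate into) anything whose Stallings graph satisfies Property $(S)$. After this one step your inductive hypothesis is gone and the induction stalls; no single-cut capacity comparison over the fibre $W$ rescues it, because the obstruction is not in choosing a better cut but in the fact that one reducing move cannot carry $\Delta$ past only part of the cycle.

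The missing idea is the multi-step repair of Proposition \ref{Solie--Prop--multiply by b}: follow $\phi_r$ with the entire chain $\phi_i = (\Delta \cup b_i, b_i)$ for $i = r-1, \dots, 1$, verify by transporting hyperedge types (the counts $[\Delta, b_i; \cdot\,]$ and $[\Delta, \cdot\,; b_i]$ are carried from each stage to the next) that each $\phi_i$ still reduces $\phi_{i+1} \cdots \phi_r(S)$, and observe that the composite multiplies $\Delta$ by the whole word $b$, so that only after all $r$ steps does $\phi_1 \cdots \phi_r(S)$ immerse onto the original target $T$ again. The lemma is thus proved in a form strictly weaker than your one-step claim: some element of $\min(S)$ reached by a reducing chain immerses onto a property $(S)$ graph, not that every intermediate stage does. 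Your remaining cases are closer to the paper but still incomplete as sketched: for $m \in A^\pm$ with $C$ cutting some non-basepoint hyperlink of $T$, one needs the fibre-counting inequality (capacity saved $\geq k$ versus $[b_r, \Delta \cup m]_{\Gamma(S)} \leq k$, where $k$ is the size of the fibres of the immersion) to conclude $(\Delta \cup m, m)$ is reducing; and the degenerate case $b_1^{-1} = b_r$ requires separate treatment up front. Those gaps are fixable along the lines you indicate; the single-step induction is the genuine one.
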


\begin{proof}
  Suppose $T$ is an $X$-digraph satisfying Property $(S)$ such that $\pi:S \rightarrow T$ is an immersion.
  Let $A \sqcup B = X$ be the partition given in the definition of Property $(S)$.
  Let the \emph{basepoint} of $T$ be the unique vertex whose hyperlink meets $A^\pm$, and let $b = b_1 \dots b_r$ be the label of the loop in $T$ labeled by $B$-edges, beginning and ending at the basepoint.
  Note that the hyperlink of the basepoint is $A^\pm \cup \{b_1^{-1}, b_r\}$.

  Since $\pi: S \rightarrow T$ is an immersion, there is a $k$ such that, for any non-basepoint $v \in VT$, the preimage $\pi^{-1}(v)$ is a set of exactly $k$ vertices.
  More, the subgraph of $S$ spanned by the $B$-edges is the union of exactly $k$ paths labeled by $b$, any two of which are either disjoint or intersect only at one or both endpoints.

  The following technical proposition will provide useful sufficient conditions for Property $(S)$ to be preserved.

  \begin{prop}
    \label{Solie--Prop--Sufficient condition for preservation of (S)}
    \mbox{}
    \begin{enumerate}
      \item
        Let $\phi = (C,m)$ be a Whitehead automorphism with $m \in A^\pm$ and let $T$ be as above.
        If $C$ does not cut the hyperlink of any non-basepoint vertex of $T$, then $\phi(T)$ also satisfies property $(S)$.
      \item
        Let $\phi = (C,m)$ be a Whitehead automorphism with $m \in B^\pm$ and let $T$ be as above.
        If $C$ does not cut the set $A^\pm$, then $\phi(T)$ also satisfies property $(S)$.
    \end{enumerate}
  \end{prop}

  \begin{proof}
    Suppose $\phi = (C,m)$ is as in part 1 of the proposition.
    In the construction of $\phi(S)$, new $m$-edges are only introduced at vertices of $S$ whose hyperlinks are cut by $C$.
    Therefore, the only new edges introduced in the application of $\phi$ are incident to the basepoint; since every $A$-edge of $T$ has the basepoint as its initial and terminal vertex, these new edges are folded away, leaving a $B$-labeled loop beginning and ending at the basepoint.

    Suppose $\phi = (C,m)$ is as in part 2 of the proposition.
    If $A^\pm$ is not cut by $C$, then the effect of $\phi$ on $T$ is to replace the loop labeled $b$ with a loop labeled $\phi(b)$ or possibly $\phi(b)^{m^{-1}}$.
    The resulting graph satisfies Property $(S)$.
  \end{proof}

  \begin{conv}
    To simplify notation, we define the following sets.
    \begin{itemize}
      \item
        $\Delta := (A^\pm \cap C) - \{m, m^{-1}, b_1^{-1}, b_r\}$
      \item
        $\Sigma := (A^\pm \cap C') - \{m, m^{-1}, b_1^{-1}, b_r\}$
      \item
        $\Pi := (B^\pm \cap C) - \{m, m^{-1}, b_1^{-1}, b_r\}$
      \item
        $\Omega := (B^\pm \cap C') - \{m, m^{-1}, b_1^{-1}, b_r\}$
    \end{itemize}
    Note that we do not necessarily have that $m, m^{-1}, b_1^{-1},$ and $b_r$ are pairwise distinct.
  \end{conv}

  Suppose that $b_1^{-1} = b_r$, and consider $\Gamma(S)$.
  Since in $\Gamma(S)$, the only element of $B^\pm$ adjacent to some element of $A^\pm$ is $b_r$, a direct calculation shows that the Whitehead automorphism $\phi=(B^\pm - {b_r^{-1}}, b_r)$ reduces $S$.
  By Proposition \ref{Solie--Prop--Sufficient condition for preservation of (S)}, $\phi(T)$ satisfies Property $(S)$.
  We will therefore assume from now on that $b_1^{-1} \neq b_r$.

  Suppose that $\phi = (C,m)$ reduces $S$, where $m \in A^\pm$.
  First note that if $C$ does not cut $\{b_1^{-1},b_r\}$, then either $(C \cup B^\pm,m)$ or $(C - B^\pm, m)$ is reducing for $S$, since only $b_1^{-1}$ and $b_r$ are adjacent to elements of $A^\pm$ in $\Gamma(S)$.
  By Proposition \ref{Solie--Prop--Sufficient condition for preservation of (S)}, the image of $T$ under either of these Whitehead automorphisms again satisfies Property $(S)$.

  Now assume that, without loss of generality, $b_r \in C$ and $b_1^{-1} \in C' := X^\pm - C$ and that $C$ cuts the hyperlink of some non-basepoint vertex of $T$.
  Since the preimage under $\pi$ of a non-basepoint vertex is a set of $k$ internal vertices in $S_X(H)$, we have $[\Pi \cup \{b_r\}, \Omega \cup \{b_1^{-1}\}]_{\Gamma(S)} \geq k$.
  Therefore, passing from $(\Delta \cup \{m, b_r\} \cup \Pi,m)$ to $(\Delta \cup m,m)$ reduces the capacity by at least $k$ (since at least $k$ hyperedges contributing to capacity came from the hyperlink of an internal vertex) at the cost of adding $[b_r, \Delta \cup m]_{\Gamma(S)}$ to the capacity.
  However, a $b_r$-edge is coincident to an $A$-edge in at most $k$ vertices of $S$, so $[b_r, \Delta \cup m]_{\Gamma(S)} \leq k$.
  Therefore, $\cp_{\Gamma(S)}(\Delta \cup m) \leq \cp_{\Gamma(S)}(\Delta \cup \{m, b_r\} \cup \Pi)$, and so $\phi'=(\Delta \cup m,m)$ must reduce $S$.
  Again, by Proposition \ref{Solie--Prop--Sufficient condition for preservation of (S)}, $\phi'(T)$ satisfies Property $(S)$.
  (See Figure \ref{Fig--A multiplier}.)

  \begin{figure}
    \begin{center}
      \subfigure[$\Gamma(S)$ with $(\Delta \cup m \cup \Pi,m)$ reducing, $m \in A^\pm$.]{\includegraphics{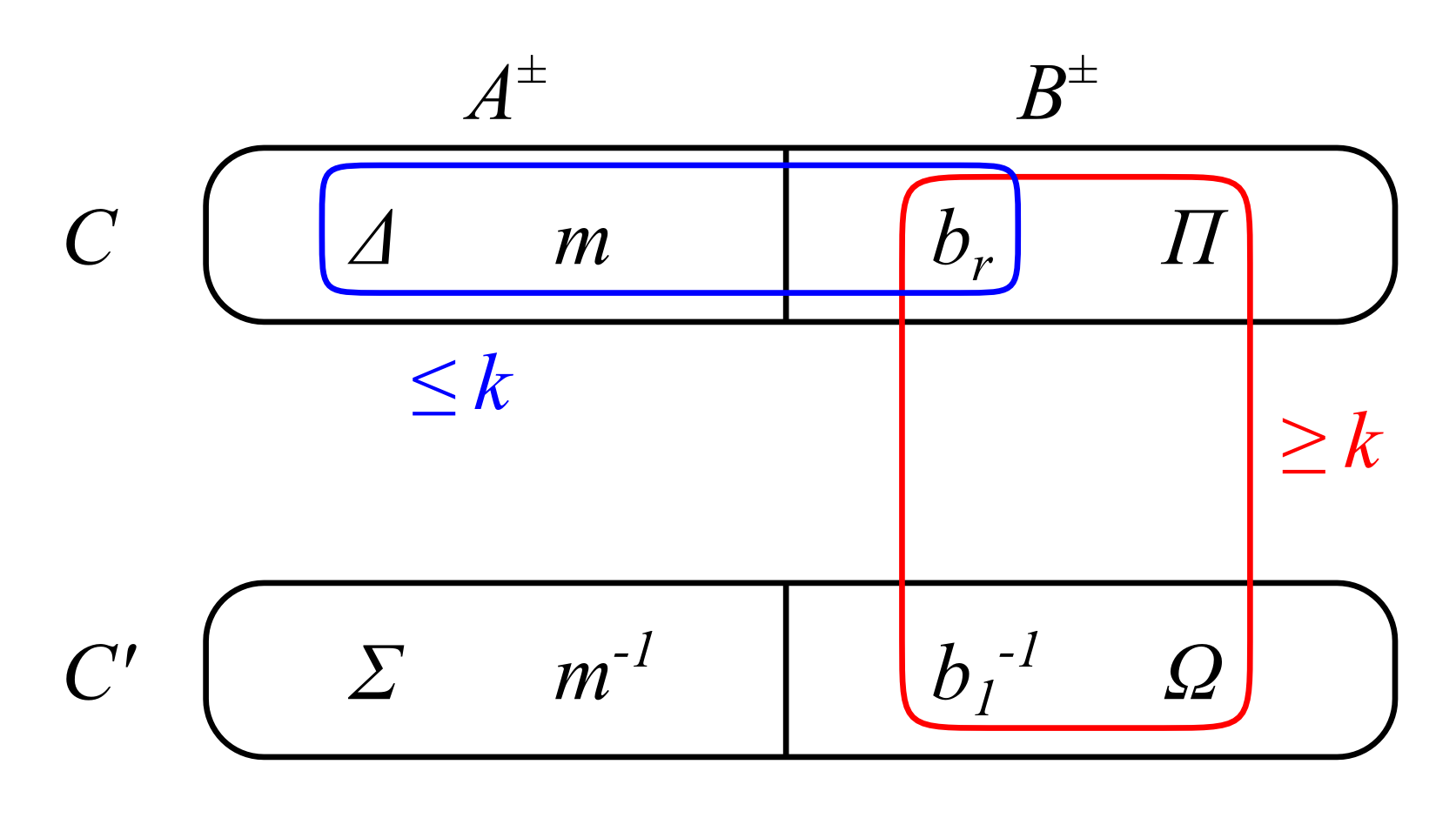}}
      \subfigure[$\Gamma(S)$ with $(\Delta \cup m,m)$ reducing, $m \in A^\pm$.]{\includegraphics{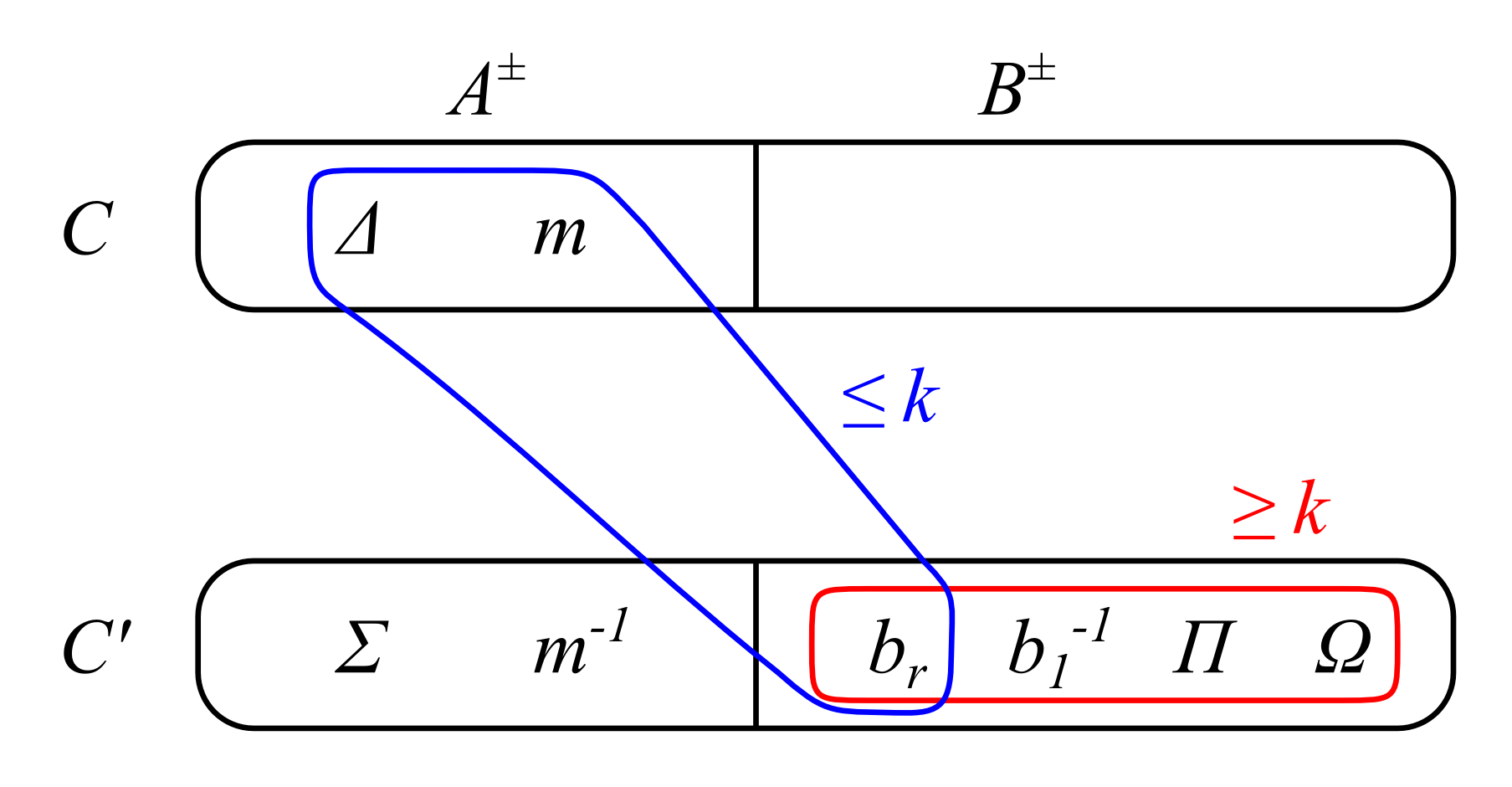}}
      \caption{If $(\Delta \cup m \cup \Pi,m)$ reduces $S$ with $m \in A^\pm$, then so must $(\Delta \cup m,m)$.}
      \label{Fig--A multiplier}
    \end{center}
  \end{figure}

  Now suppose that $\phi=(\Delta \cup \{m, b_r\} \cup \Pi,m)$ reduces $S$, where $m \in B^\pm$.
  Once again, if $C$ does not cut $\{b_1^{-1},b_r\}$, then either Whitehead automorphism $(\Delta \cup \Sigma \cup m \cup \Pi, m)$ or $(m \cup \Pi,m)$ will also reduce $S$.
  By Proposition \ref{Solie--Prop--Sufficient condition for preservation of (S)}, each of these Whitehead automorphisms preserve Property $(S)$.
  We may therefore assume that $b_r \in C$ and $b_1^{-1} \in C'$.

  Consider the quantities $[\Delta, b_r; \Sigma \cup \Omega \cup \{b_1^{-1}, m^{-1}\}]_{\Gamma(S)}$ and $[\Delta, \Sigma \cup \Omega \cup \{b_1^{-1}, m^{-1}\}; b_r]_{\Gamma(S)}$.
  Note first that
  \begin{align*}
    [\Delta, \Sigma \cup \Omega \cup \{b_1^{-1}, m^{-1}\}; b_r]_{\Gamma(S)} = [\Delta, \Sigma \cup \{b_1^{-1}\}; b_r]_{\Gamma(S)}
  \end{align*}
  because no element of $\Delta \subseteq A^\pm$ is coincident to $m^{-1}$ or $\Omega$.
  
  Suppose that 
  \begin{align*}
    [\Delta,b_r;\Sigma \cup \Omega \cup \{b_1^{-1}, m^{-1}\}]_{\Gamma(S)} \leq [\Delta, \Sigma \cup \{b_1^{-1}\}; b_r]_{\Gamma(S)}.
  \end{align*}
  Moving $\Delta$ into $C'$ must therefore not increase the capacity of the cut, and so $\phi'=(\{m, b_r\} \cup \Pi,m)$ must be reducing for $S$.
  Since $\phi'$ has multiplier in $B^\pm$ and no longer cuts $A^\pm$, its image has Property $(S)$.

  On the other hand, let 
  \begin{align*}
  [\Delta,b_r; \Sigma \cup \Omega \cup \{b_r^{-1}, m^{-1}\}]_{\Gamma(S)} > [\Delta, \Sigma \cup \Omega \cup \{b_1^{-1}, m^{-1}\}; b_r]_{\Gamma(S)}.
  \end{align*}
  As $\deg_{\Gamma(S)}(b_r)$ is at least as large as $[\Delta,b_r; \Sigma \cup \Omega \cup \{b_r^{-1}, m^{-1}\}]_{\Gamma(S)}$, we immediately see that changing the multiplier to $b_r$ yields a reducing automorphism. Thus the Whitehead automorphism $\phi'=(\Delta \cup b_r, b_r)$ reduces $S$. (See Figure \ref{Fig--B multiplier}.)
  
  Unfortunately, $\phi' = (\Delta \cup b_r, b_r)$ will almost certainly fail to preserve Property $(S)$ in most cases.
  However, when $\phi'$ is reducing, we can follow it with a specific sequence of Whitehead automorphisms whose composition effectively multiplies $\Delta$ by the entire word $b=b_1 \dots b_r$.
  The next proposition asserts that each individual Whithead automorphism in this sequence is reducing, and that Property $(S)$ is restored at the end of the sequence.

  \begin{figure}
    \begin{center}
      \subfigure[$\Gamma(S)$ with $(C,m)$ reducing, $m \in B^\pm$.]{\includegraphics{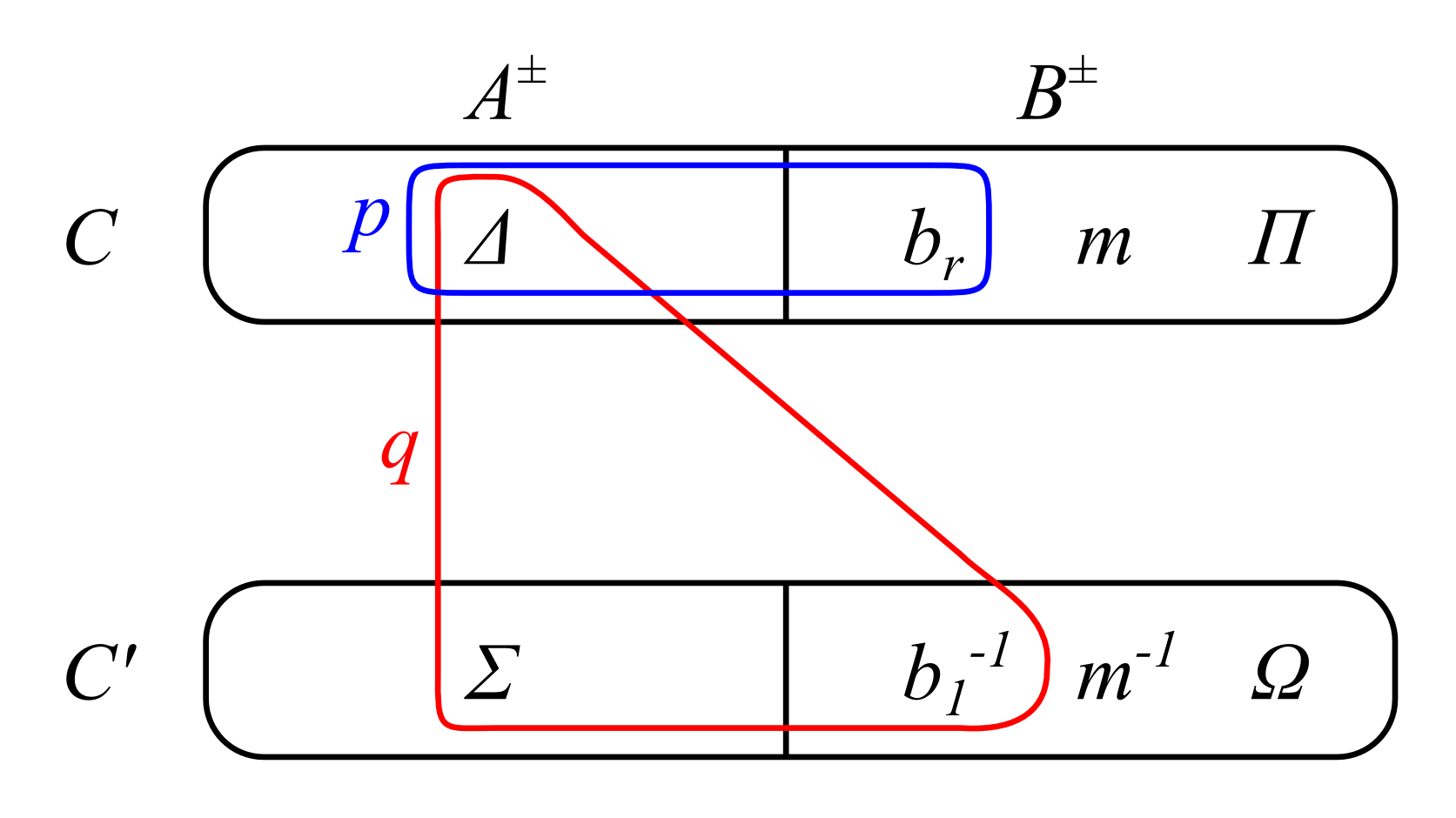}}
      \subfigure[If $p \leq q$, then $(\{b_r,m\} \cup \Pi, b_r)$ is reducing.]{\includegraphics{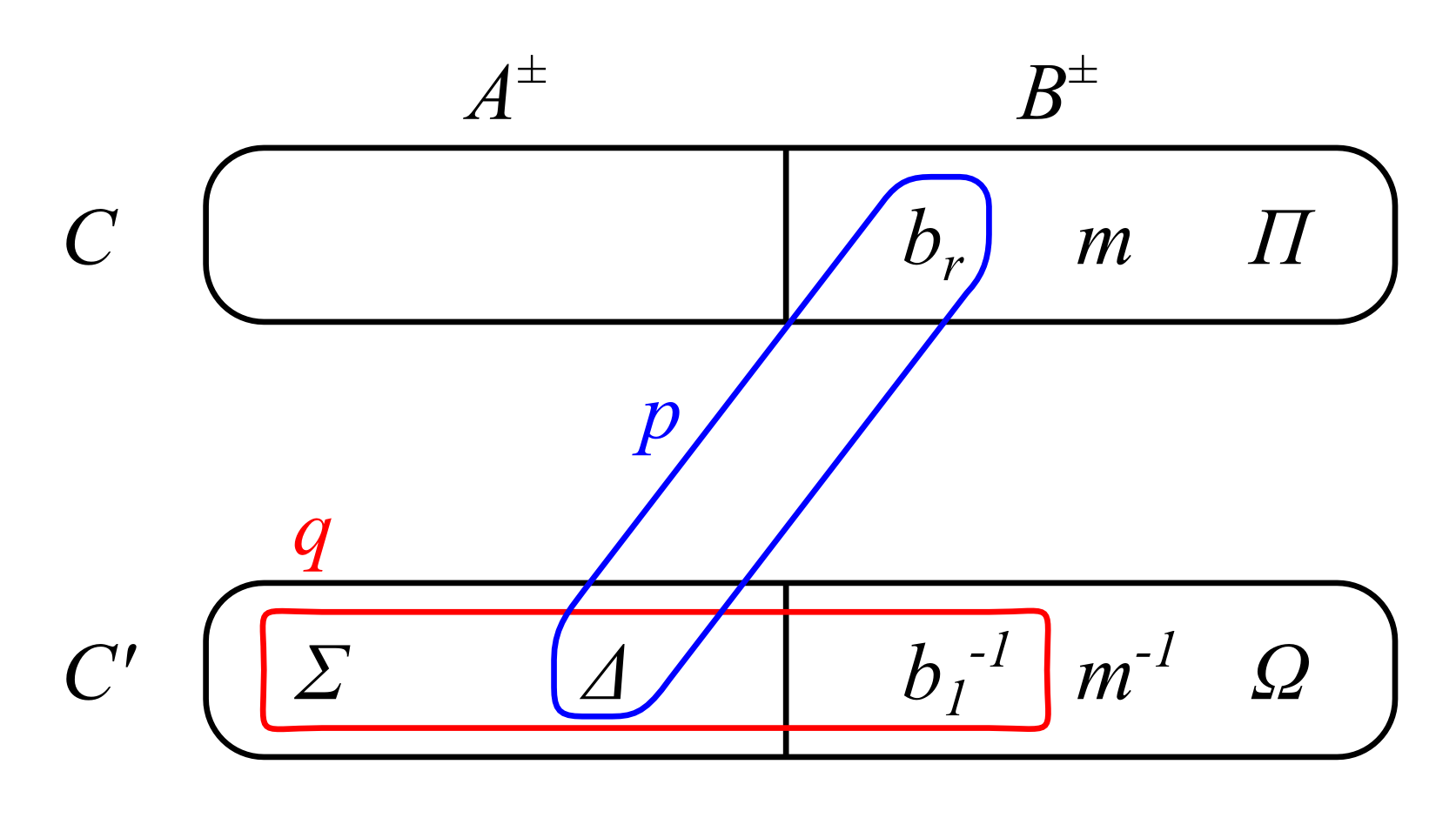}}
      \subfigure[If $p > q$, then $(\Delta \cup b_r, b_r)$ is reducing.]{\includegraphics{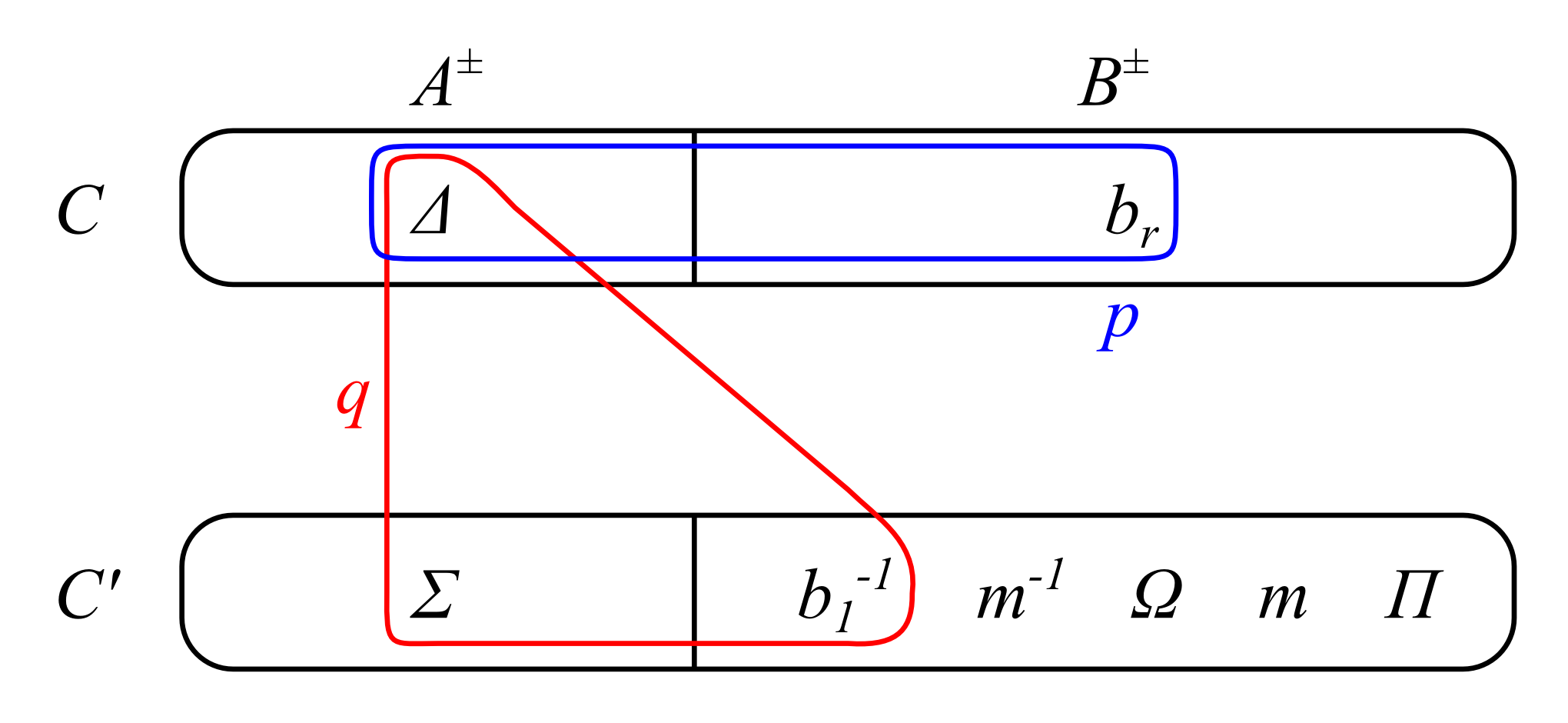}}
    \end{center}
    \caption{If $(C,m)$ reduces $S$ with $m \in B^\pm$, then either $(C\cap B^\pm,m)$ or $(\Delta \cup b_r,b_r)$ also reduces $S$.}
  \label{Fig--B multiplier}
  \end{figure}

  \begin{prop}
    \label{Solie--Prop--multiply by b}
    For $i=1, \dots, r$, define $\phi_i := (\Delta \cup b_i, b_i)$.
    If $\phi_r$ reduces $S$, then for all $i=1, \dots, r-1$,
    the Whitehead automorphism $\phi_i$ reduces $\phi_{i+1}\cdots \phi_r(S)$.
    Furthermore, $\phi_1 \dots \phi_r(S)$ immerses onto a graph satisfying Property (S).
  \end{prop}

  \begin{proof}
    First notice that since $S$ immerses onto $T$ with property $(S)$, then $\phi_{i+1}\cdots \phi_r(S)$ immerses onto $\phi_{i+1}\cdots \phi_r(T)$.
    Set $\Pi_i := B^\pm - b_i$ and $\Pi_i^{-1} := B^\pm - b_i^{-1}$.

    Suppose $v \in VS$ has hyperlink type $(\Delta, b_r; \Sigma \cup \Pi_r)$.
    The image of the vertex adjacent to $v$ via the $b_r$ edge will then have hyperlink type $(\Delta, b_{r-1}; \Sigma \cup \Pi_{r-1})$ in $\phi_r(S)$.
    Moreover, this is the only way in which a vertex of $\phi_r(S)$ may have type $(\Delta, b_{r-1}; \Sigma \cup \Pi_{r-1})$.
    Therefore
    \begin{equation*}
      [\Delta, b_r; \Sigma \cup \Pi_r]_{\Gamma(S)} = [\Delta, b_{r-1}; \Sigma \cup \Pi_{r-1}]_{\Gamma(\phi_r(S))}.
    \end{equation*}

    Suppose $v \in VS$ has type $(\Delta, \Sigma \cup \Pi_r; b_r)$.
    Then $v$ contributes an auxiliary vertex with hyperlink of type $(\Delta, b_r^{-1}; \Sigma \cup \Pi_r^{-1})$.
    Again, the only way a hyperlink of type $(\Delta, b_r^{-1}; \Sigma \cup \Pi_r^{-1})$ may arise is as such an auxiliary vertex, so
    \begin{equation*}
      [\Delta, b_r^{-1}; \Sigma \cup \Pi_r^{-1}]_{\Gamma(\phi_r(S))} = [\Delta, \Sigma \cup \Pi_r; b_r]_{\Gamma(S)}.
    \end{equation*}
    However, since a vertex of $\phi_r(S)$ whose hyperlink meets $\Delta$ must have hyperlink contained in $\Delta \cup \{b_r^{-1}, b_{r-1}\}$, a vertex of $\phi_r(S)$ is of hyperlink type $(\Delta, b_r^{-1}; \Sigma \cup \Pi_r^{-1})$ if and only if it is of hyperlink type $(\Delta, \Sigma \cup \Pi_{r-1}; b_{r-1})$.
    We therefore have
    \begin{equation*}
      [\Delta, \Sigma \cup \Pi_r; b_r]_{\Gamma(S)} = [\Delta, \Sigma \cup \Pi_{r-1}; b_{r-1}]_{\Gamma(\phi_r(S))}.
    \end{equation*}

    Given that $\phi_r = (\Delta \cup b_r, b_r)$ reduces $S$, it follows immediately that
    \begin{equation*}
      [\Delta, b_r; \Sigma \cup \Pi_r]_{\Gamma(S)} > [\Delta, \Sigma \cup \Pi_r; b_r]_{\Gamma(S)}.
    \end{equation*}
    Using the above equalities and inequalities, we then have
    \begin{equation*}
      [\Delta, b_{r-1}; \Sigma \cup \Pi_{r-1}]_{\Gamma(\phi_r(S))} > [\Delta, \Sigma \cup \Pi_{r-1}; b_{r-1}]_{\Gamma(\phi_r(S))},
    \end{equation*}
    which is equivalent to saying that $\phi_{r-1} = (\Delta \cup b_{r-1}, b_{r-1})$ reduces $\phi_r(S)$.

    To see that $\phi_{i}$ reduces $\phi_{i+1} \cdots \phi_r (S)$, note that any hyperedge of $\Gamma(\phi_{i+1} \cdots \phi_r (S))$ incident to $\Delta$ is contained in $\Delta \cup \{b_{i+1}^{-1}, b_i\}$.
    A similar argument shows that any vertex whose hyperlink contributes to capacity and not degree in $\phi_{i+1} \cdots \phi_r (S)$ came from a vertex with hyperlink contributing to capacity and not degree in $\phi_{i+2} \cdots \phi_r(S)$, and similarly for vertices contributing to degree and not capacity.
    It then follows that $\phi_i$ reduces $\phi_{i+1} \cdots \phi_r (S)$.

    Since the net effect of $\phi_1 \cdots \phi_r$ is to multiply the edges in $\Delta$ by the entire word $b$, it is immediate that $\phi_1 \cdots \phi_r(S)$ immerses onto $T$.
  \end{proof}

  By Proposition \ref{Solie--Prop--multiply by b}, if $\phi_r = (\Delta \cup b_r, b_r)$ reduces $S$, then we have an entire sequence of reducing Whitehead automorphisms which, when applied to $S$, yield an $X$-digraph that again immerses onto a graph with Property $(S)$.
  Therefore, whenever $S$ admits an immersion onto a graph satisfying Property $(S)$, some element of $\min(S)$ is guaranteed to also admit an immersion onto a graph satisfying Property $(S)$.
\end{proof}

\begin{cor}
  Let $F(X)$ be a free group with $\#X \geq 3$.
  Then $\AO(\mathcal{SV})$ is decidable.
\end{cor}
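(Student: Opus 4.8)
The plan is to reduce membership in $\AO(\mathcal{SV})$ to a finite search over $\min(S_X(H))$, combining Proposition \ref{Solie--Prop--Immersion Property (S)} with the preceding Lemma. First I would observe that $\phi(H)$ is a subgroup of some element of $\mathcal{SV}$ precisely when $S_X(\phi(H))$ admits an immersion into a graph with Property $(S)$, by Proposition \ref{Solie--Prop--Immersion Property (S)}. As $\phi$ ranges over $\Aut F(X)$, the graph $S_X(\phi(H))$ ranges over the $\Aut F(X)$-orbit of $S_X(H)$, so $H \in \AO(\mathcal{SV})$ if and only if \emph{some} $X$-digraph in this orbit admits such an immersion. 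The difficulty is that the orbit is infinite, so the goal is to certify the existence of such an orbit member by inspecting only the finite set $\min(S_X(H))$.

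I would then split into two cases according to whether $H$ is contained in a proper free factor, which is decidable by the decidability of $\AO(\mathcal{SF})$ (equivalently, by testing whether the elements of $\min(S_X(H))$ omit a letter of $X$). In the case that $H$ lies in a proper free factor, I claim $H \in \AO(\mathcal{SV})$ automatically: since $\#X \geq 3$, every proper standard free factor $\langle Y \rangle$ with $Y \subsetneq X$ is contained in an element of $\mathcal{SV}$. Indeed, when $\#Y = \#X - 1$, choosing $y_0 \in Y$ and $x \in X \setminus Y$ and setting $B = \{x, y_0\}$, $A = Y \setminus \{y_0\}$, $b = y_0$ exhibits $\langle Y \rangle$ itself as $\langle A, b \rangle \in \mathcal{SV}$ (here $\#A \geq 1$, $\#B = 2$, and $b$ is not a proper power); when $\#Y$ is smaller, $\langle Y\rangle$ embeds in such a group of rank $\#X - 1$. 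Composing with the automorphism carrying $H$ into a standard free factor then shows $H \in \AO(\mathcal{SV})$, and the algorithm answers \textbf{yes}. This case is treated separately precisely because a minimal element of $\min(S_X(H))$ that omits a letter need not itself immerse into a Property $(S)$ graph, so the immersion test of the next case cannot be trusted here.

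In the remaining case $H$ lies in no proper free factor, so the hypotheses of the preceding Lemma are satisfied by any orbit member immersing into a Property $(S)$ graph. Here the Lemma does the essential work: if some orbit member admits an immersion into a graph with Property $(S)$, then so does some element of $\min(S_X(H))$; the converse is immediate since a minimal element is itself an orbit member. Thus $H \in \AO(\mathcal{SV})$ if and only if some element of the finite set $\min(S_X(H))$ admits an immersion into a graph satisfying Property $(S)$, and the algorithm computes $\min(S_X(H))$ via Whitehead's Theorem and tests each element for such an immersion.

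The main obstacle is this last test: deciding whether a \emph{given} finite cyclically reduced $X$-digraph $T_0$ immerses into a graph with Property $(S)$. I would handle this by a finite search over the partitions $X = A \sqcup B$ with $\#A \geq 1$ and $\#B \geq 2$ (of which there are finitely many), and for each partition checking the combinatorial conditions that the $A$-edges of $T_0$ map to a bouquet of single-edge loops and that the $B$-edges fold to a rank-one graph, a decidable condition on the finite graph $T_0$. Some care is also needed to reconcile the ``into'' phrasing of Proposition \ref{Solie--Prop--Immersion Property (S)} with the ``onto'' phrasing of the Lemma; passing to the image subgraph of an immersion, or equivalently to a minimal Property $(S)$ target, matches the two.
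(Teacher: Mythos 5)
Your proposal is correct and follows the same overall route as the paper: minimize $S_X(H)$ via Whitehead's Theorem, use the preceding Lemma to guarantee that the immersion-onto-Property-$(S)$ condition survives minimization, and then run a finite test on the elements of $\min(S_X(H))$. Where you genuinely diverge is in two places, both to your credit. First, the paper's proof of this corollary never addresses the Lemma's hypothesis that the subgroup lie in no proper free factor: it passes directly from ``$S_X(\phi(H))$ immerses onto a Property $(S)$ graph'' to ``some element of $\min$ does,'' which is exactly what the Lemma does \emph{not} promise when $H$ sits inside a free factor. Your explicit case split --- decide $\AO(\mathcal{SF})$ first, and in the free-factor case answer yes outright because for $\#X \geq 3$ every standard free factor $\langle Y \rangle$ is itself (or embeds in) a segment vertex subgroup $\langle Y \setminus \{y_0\}, y_0 \rangle$ with $B = \{x, y_0\}$ --- is a genuine repair of that glossed step, and your verification of the $\mathcal{SV}$ conditions ($\#A \geq 1$ since $\#X \geq 3$, $\#B = 2$, $b = y_0$ not a proper power) is right. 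Second, for the terminal decidability test the paper enumerates the set $\PQ(M)$ of ``principal quotients'' of each minimal element $M$ (a notion it uses without ever defining --- in effect the finitely many folded quotient graphs of $M$, since a surjection from a folded graph onto a folded graph is automatically an immersion) and checks Property $(S)$ on each; you instead search over partitions $X = A \sqcup B$ and check a canonical quotient per partition. Your version is correct in the case where it is invoked --- with no proper free factor, all $A$-incident vertices are forced to a single image vertex, and the resulting folded quotient is the minimal candidate target, so it satisfies Property $(S)$ whenever any target exists --- but as written it is the sketchiest part of your argument; the paper's blunt enumeration of all folded quotients is the cleaner way to see decidability, and either implementation works. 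Your reconciliation of ``into'' versus ``onto'' via the image subgraph is also the right move and is needed in both proofs, since the Proposition is stated with ``into'' and the Lemma with ``onto''; in the no-free-factor case every letter appears in $S_X(\phi(H))$ and the leafless image of a cyclically reduced graph forces the immersion to be surjective.
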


\begin{proof}
  If $H \fg F(X)$ is such that $\phi(H) \leq K \in \mathcal{SV}$, then $S_X(\phi(H))$ immerses onto an $X$-digraph satisfying Property $(S)$.
  Therefore some element of $\min(S_X(\phi(H)))$ immerses onto an $X$-digraph satisfying Property $(S)$; equivalently, some element of $\min(S_X(\phi(H)))$ has a principal quotient satisfying Property $(S)$.
  Since $\min(S_X(\phi(H))) = \min(S_X(H))$, some element of $\min(S_X(H))$ has a principal quotient satisfying Property $(S)$.

  Our algorithm is therefore:
  \begin{alg}
    Given $H \fg F(X)$, we may determine whether or not there exist $\phi \in \Aut F(X)$ and $K \in \mathcal{SV}$ such that $\phi(H) \leq K$ as follows:
    \begin{enumerate}
      \item
        Construct the finite $X$-digraph $S_X(H)$.
      \item
        Construct the finite set $\min(S_X(H))$.
      \item
        Construct the finite set $\PQ(\min(S_X(H))) := \ds{\bigcup_{M \in \min(S_X(H))} \PQ(M)}$.
      \item
        For each $P \in \PQ(\min(S_X(H)))$, determine whether or not $P$ satisfies Property $(S)$.
        If a $P$ satisfying Property $(S)$ is found, conclude that there exist $\phi \in \Aut F(X)$ and $K \in \mathcal{SV}$ such that $\phi(H) \leq K$.
        Otherwise, conclude that no such $\phi \in \Aut F(X)$ and $K \in \mathcal{SV}$ exist.
    \end{enumerate}
  \end{alg}
\end{proof}

Since $\mathcal{SV}$ is the set of subgroups of $F(X)$ which are vertex groups in some very small elementary cyclic splitting of $F(X)$, we have the following theorem.

\begin{thm}
  \label{Solie--Theorem--Filling decidable for segment splittings}
  Let $F(X)$ be a free group of finite rank at least three.
  There is an algorithm to determine, given $H \fg F(X)$, whether or not $H$ is elliptic in a nontrivial, very small elementary cyclic splitting of $F(X)$.
\end{thm}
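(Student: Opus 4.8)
The plan is to reduce ellipticity in an arbitrary nontrivial, very small elementary cyclic splitting to a finite disjunction of automorphic orbit problems and then solve each with the Stallings/Whitehead machinery already in place. By definition $H$ is elliptic in such a splitting exactly when it is conjugate into one of the vertex subgroups, and by Proposition \ref{Solie--Prop--Characterization of vertex subgroups} each vertex subgroup is either a segment vertex subgroup $\langle A,b\rangle$, the complementary free factor $\langle B\rangle$ of a segment splitting, or a loop vertex subgroup $\langle U,u^v\rangle$. Since each of these has an automorphic image in the corresponding standard family $\mathcal{SV}$, $\mathcal{SF}$, or $\mathcal{LV}$, the statement ``$H$ is elliptic in some splitting of the given type'' is equivalent to ``there is $\phi \in \Aut F(X)$ with $\phi(H)$ contained in a standard vertex subgroup of that type,'' i.e.\ to a positive answer of $\AO(\mathcal{SV})$, $\AO(\mathcal{SF})$, or $\AO(\mathcal{LV})$. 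Thus the whole theorem reduces to deciding the disjunction of these three problems.

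Two of the three are already settled. The free-factor corollary gives decidability of $\AO(\mathcal{SF})$ via the criterion that some element of $\min(S_X(H))$ omits a letter of $X$, and the segment corollary gives decidability of $\AO(\mathcal{SV})$ via the criterion (Proposition \ref{Solie--Prop--Immersion Property (S)} together with the preservation Lemma) that some element of $\min(S_X(H))$ immerses onto a graph satisfying Property $(S)$. I would run both algorithms first; if either returns \textsc{yes}, the search version is obtained as in those proofs, by inverting the recorded Whitehead sequence and reading off the partition $X=A\sqcup B$ and the word $b$, which exhibits an explicit segment splitting in which $H$ is elliptic. Note that this already disposes of \emph{both} vertex subgroups of every segment splitting, so the only remaining case is loop splittings.

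For the loop case I would mirror the entire segment development for the family $\mathcal{LV}$. First I would record the shape of $S_X(\langle U,u^v\rangle)$: a bouquet of single-edge $U$-loops at a basepoint, a distinguished $v$-edge realizing the stable letter, and a second copy of the word $u$ read across that $v$-edge, and I would encode this as a ``Property $(L)$.'' The loop analog of Proposition \ref{Solie--Prop--Immersion Property (S)} should then read: $H$ is a subgroup of some element of $\mathcal{LV}$ if and only if $S_X(H)$ admits an immersion onto a graph satisfying Property $(L)$; once Property $(L)$ is correctly formulated this direction is a routine covering-space argument. With it, $\AO(\mathcal{LV})$ reduces, exactly as $\AO(\mathcal{SV})$ did, to scanning the finitely many principal quotients of the elements of $\min(S_X(H))$ for one with Property $(L)$.

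The hard part is the loop analog of the preservation Lemma for Property $(S)$, and in particular of Proposition \ref{Solie--Prop--multiply by b}. In the segment argument the multiplier-exchange and ``multiply by $b$'' steps succeed because only $b_1^{-1}$ and $b_r$ are adjacent to $A^\pm$ in the Whitehead hypergraph, so the capacity bookkeeping localizes at the basepoint. In the loop case the conjugating edge $v$ links the $U$-bouquet to the displaced copy of $u$, creating extra adjacencies among $v^{\pm}$, the endpoints of $u$, and the letters of $U$; I expect to treat $v$ as a distinguished multiplier and to run a ``multiply $u$ across $v$'' sequence analogous to Proposition \ref{Solie--Prop--multiply by b}, proving each intermediate Whitehead automorphism non-expanding and that Property $(L)$ is restored at the end. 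This is where the combinatorics are heaviest and where the whole argument stands or falls. Once it is established, the algorithm is simply to run the three decidable subprocedures and answer \textsc{yes} iff any does, with the loop search version reconstructing $U$, $u$, and $v$ by the same inversion trick used in the segment case.
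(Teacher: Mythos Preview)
You have over-read the scope of the theorem. In the paper, Theorem~\ref{Solie--Theorem--Filling decidable for segment splittings} sits at the end of the subsection ``Segment vertex subgroups in higher rank,'' carries the label ``Filling decidable for segment splittings,'' and is introduced by the sentence ``Since $\mathcal{SV}$ is the set of subgroups of $F(X)$ which are vertex groups in some very small elementary cyclic [segment] splitting of $F(X)$, we have the following theorem.'' In other words, despite the word ``segment'' being dropped from the displayed statement, the theorem is only about \emph{segment} splittings, and the paper's proof is nothing more than that one sentence: ellipticity in a segment splitting is equivalent to $\AO(\mathcal{SV})$ (the $\langle B\rangle$ vertex group is a proper free factor, hence already a subgroup of some element of $\mathcal{SV}$ once $\#X\ge 3$), and $\AO(\mathcal{SV})$ has just been shown decidable. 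Your treatment of the segment case---run $\AO(\mathcal{SF})$ and $\AO(\mathcal{SV})$ and invert the recorded Whitehead sequence to exhibit the splitting---is exactly this, so for the theorem as the paper intends it your argument is correct and coincides with the paper's.

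Where you diverge is in trying to fold $\AO(\mathcal{LV})$ into the same theorem. That is not what the paper proves here; loop splittings are handled separately and only partially in the next subsection. Moreover, your proposed Property~$(L)$ is not the paper's: the paper's Property~$(L)$ is a condition on $S_X(H)$ itself (a unique $v$-edge whose deletion leaves two components, one of rank one), not a condition on a target graph for an immersion, and Theorem~\ref{Solie--Theorem--Property L decidable up to automorphism} only decides whether some automorphic image of $H$ has this shape---it does \emph{not} decide $\AO(\mathcal{LV})$. Your honest admission that ``this is where the whole argument stands or falls'' is on the mark: the analogue of Proposition~\ref{Solie--Prop--multiply by b} for loop vertex subgroups is genuinely open in the paper, and you should not present the loop case as part of the proof of this theorem.
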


\subsubsection{The Rank Two Case}

Let $F(a,b)$ denote the free group of rank two.
The following characterization of the standard vertex subgroups of $F(a,b)$ follows directly from Proposition \ref{Solie--Prop--Characterization of vertex subgroups}.

\begin{prop}
  For $F(a,b)$, we have $\mathcal{SV} = \emptyset$ and $\mathcal{LV}=\{\langle a, a^b \rangle, \langle b, b^a \rangle\}$.
\end{prop}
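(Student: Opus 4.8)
The plan is to prove both assertions by unwinding Definition~\ref{Defn--Segment, loop vertex subgroups} together with the classification of vertex subgroups in Proposition~\ref{Solie--Prop--Characterization of vertex subgroups}, thereby reducing everything to a finite enumeration over the partitions of $X = \{a,b\}$.

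For the claim $\mathcal{SV} = \emptyset$, I would argue by a cardinality count. A standard segment vertex subgroup requires a partition $A \sqcup B = X$ with $\#A \geq 1$ and $\#B \geq 2$. Since $\#X = 2$, any partition satisfies $\#A + \#B = 2$, whereas the two constraints force $\#A + \#B \geq 3$. This contradiction shows that no such subgroup exists, so $\mathcal{SV} = \emptyset$.

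For the claim about $\mathcal{LV}$, I would first note that a standard loop vertex subgroup $\langle U, u^v\rangle$ requires $U \sqcup \{v\} = \{a,b\}$; as $\{v\}$ is a singleton, there are exactly the two partitions $(U,v) = (\{a\},b)$ and $(U,v)=(\{b\},a)$. The next step is to identify the admissible multipliers $u$: the nontrivial non-proper-powers of the infinite cyclic group $\langle a\rangle$ are exactly $a$ and $a^{-1}$, and since $u$ and $u^{-1}$ generate the same subgroup (and conjugation by $v$ is an automorphism), both choices yield the single subgroup $\langle a, a^b\rangle$; symmetrically, the other partition yields $\langle b, b^a\rangle$. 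This leaves precisely the two candidate subgroups named in the statement.

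The last step, and the only one requiring an actual computation rather than bookkeeping, is to confirm that $\langle a, a^b\rangle$ and $\langle b, b^a\rangle$ are genuinely distinct, so that $\mathcal{LV}$ has exactly two elements. I expect the cleanest route is the abelianization homomorphism $F(a,b) \to \mathbb{Z}^2$: under it both $a$ and $a^b$ map to $(1,0)$, so $\langle a, a^b\rangle$ maps into $\mathbb{Z}\times\{0\}$, whence $b \notin \langle a, a^b\rangle$ while trivially $b \in \langle b, b^a\rangle$. I anticipate no real obstacle; the entire proposition is a short definitional check, with the exponent-sum argument being the one place where a genuine (if elementary) free-group fact is invoked, namely to rule out an accidental coincidence of the two subgroups.
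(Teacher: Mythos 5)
Your proof is correct and matches the paper's approach: the paper offers no separate argument, stating only that the proposition ``follows directly'' from the characterization of vertex subgroups, and your case analysis over partitions of $\{a,b\}$ is exactly that intended unwinding. Your abelianization check that $\langle a, a^b\rangle \neq \langle b, b^a\rangle$ is a sound (if strictly optional) addition, since the stated set equality holds whether or not the two listed subgroups are distinct.
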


As both elements of $\mathcal{LV}$ share an automorphic orbit, the problem of determining ellipticity in a vertex subgroup for $F(a,b)$ is therefore simply the problem $\AO(\langle a, a^b \rangle)$.

\begin{thm}
  \label{Solie--Theorem--ASP in F2 case}
  The problem $\AO(\langle a, a^b \rangle)$ is decidable.
\end{thm}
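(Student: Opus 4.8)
The plan is to adapt the strategy used above for segment vertex subgroups, working throughout with the single subgroup $K := \langle a, a^b\rangle$ in place of the family $\mathcal{SV}$.

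\emph{Step 1: reduction to a fixed immersion target.} First I would compute the Stallings graph $S := S_X(K)$: it has two vertices carrying an $a$-loop each, joined by a single $b$-edge, so that the two hyperlinks are $\{a,a^{-1},b\}$ and $\{a,a^{-1},b^{-1}\}$. A direct capacity computation via Proposition~\ref{Solie--Prop--(C,m) only changes m edge count} shows that no Whitehead automorphism reduces $S$, so $S$ is minimal. By the standard correspondence between conjugacy-containment and immersion of core graphs, a subgroup is conjugate into $K$ precisely when its Stallings graph immerses into $S$. Since the two standard loop vertex subgroups lie in one automorphic orbit, the problem $\AO(K)$ therefore asks exactly whether some automorphic image of $H$ has a Stallings graph immersing into $S$. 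Crucially, and in contrast to the segment case, the immersion target here is the single fixed graph $S$, so no principal-quotient construction is needed: immersibility into $S$ is an intrinsic property of each candidate graph.

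\emph{Step 2: the combinatorial property.} I would characterize the folded $X$-digraphs $T$ that immerse into $S$. Because each vertex of $S$ carries its own $a$-loop and the only $b$-edge runs from the second vertex to the first, an immersion $T \to S$ is the same data as a $2$-colouring of $VT$ by the two vertices of $S$ in which every $a$-edge is monochromatic and every $b$-edge runs from a vertex of the second colour to a vertex of the first. Contracting the connected components of the $a$-subgraph, such a colouring exists if and only if no $a$-component of $T$ is simultaneously the initial vertex of one $b$-edge and the terminal vertex of another (in particular no $b$-edge lies within a single $a$-component). I would record this as ``Property~$(L)$'', so that $T$ immerses into $S$ if and only if $T$ satisfies Property~$(L)$.

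\emph{Step 3: descent to a minimal graph (the crux).} The heart of the argument is the loop analogue of the descent lemma proved above for Property~$(S)$: if $S_X(H)$ satisfies Property~$(L)$, is not minimal, and $H$ is contained in no proper free factor, then some element of $\min(S_X(H))$ again satisfies Property~$(L)$. As before I would take a reducing Whitehead automorphism $\phi = (C,m)$ and argue by cases on whether $m \in \{a,a^{-1}\}$ (an ``$a$-multiplier'') or $m \in \{b,b^{-1}\}$ (a ``$b$-multiplier'') that $\phi$ can be replaced by, or followed by, reducing automorphisms preserving the source/sink colouring. For an $a$-multiplier one controls the change in capacity through $\Gamma(S)$ exactly as in Proposition~\ref{Solie--Prop--Sufficient condition for preservation of (S)}; for a $b$-multiplier the automorphism re-routes the $b$-edges, and one composes a sequence that slides an $a$-component along a $b$-edge while keeping each intermediate automorphism reducing, mirroring Proposition~\ref{Solie--Prop--multiply by b}, after which the colouring is restored. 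Verifying that the colouring survives each case, and that the composite is genuinely non-expanding, is the main obstacle, and the place where the rank-two hypothesis is used to keep the case analysis finite.

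\emph{Step 4: the free-factor case and the algorithm.} The case excluded above is easy in rank two: every proper free factor of $F(a,b)$ is cyclic, so there $H$ is cyclic and $\min(S_X(H))$ is a small computable set whose members I would test directly against Property~$(L)$; this subcase is already covered by the decidability of $\AO(\mathcal{SF})$ together with the intrinsic test of Step~2. Combining the cases, the algorithm is: construct $S_X(H)$, construct the finite set $\min(S_X(H))$ via Whitehead's Theorem, and test each member for Property~$(L)$ by attempting the $2$-colouring of Step~2. By Steps~1--3 a member of $\min(S_X(H))$ satisfies Property~$(L)$ if and only if $H$ has an automorphic image conjugate into $K$, which settles $\AO(\langle a, a^b\rangle)$.
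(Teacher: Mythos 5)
Your overall strategy---minimize with Whitehead automorphisms and show that immersibility into $S_X(\langle a, a^b\rangle)$ descends to some element of $\min(S_X(H))$---is exactly the paper's strategy, and your Step~2 colouring characterization of immersions into $S:=S_X(\langle a,a^b\rangle)$ is correct (and arguably cleaner than the paper's test, which enumerates two-vertex quotients; your ``into'' rather than ``onto'' formulation also absorbs the free-factor case painlessly). But Step~3, which you yourself flag as ``the main obstacle,'' is the entire mathematical content of the theorem, and you neither prove it nor identify the mechanism that makes it work. The missing idea is a counting inequality: for a connected, cyclically reduced $S'$ immersing onto $S$, the initial and terminal vertices of $b$-edges form disjoint sets, each component of $S'-E_bS'$ is a folded $a$-path or $a$-cycle containing at least one $a$-edge, and summing over the $k$ components gives $\#E_aS' \geq 2\#E_bS' - k$ together with $\#E_aS' \geq k$, hence $\deg_{\Gamma(S')}(a) \geq \deg_{\Gamma(S')}(b)$. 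With this in hand no replacement \emph{sequence} is needed at all: in rank two any type II Whitehead automorphism whose cut has one or three elements acts trivially on cyclically reduced graphs, so up to symmetry a reducing automorphism is $(\{a,b\},a)$ or $(\{a,b\},b)$; these have the same cut capacity, so if the $b$-multiplier reduces then by the inequality so does the $a$-multiplier, and $(\{a,b\},a)$ visibly preserves the immersion property because it leaves $S$ itself invariant. A reducing $b$-multiplier is thus \emph{replaced outright}, never applied and then repaired.

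By contrast, the mechanism you sketch for the $b$-multiplier case---a chain of reducing automorphisms ``sliding an $a$-component along a $b$-edge,'' mirroring Proposition~\ref{Solie--Prop--multiply by b}---does not get off the ground here: in that proposition the chain has length $r = |b|$, whereas in this loop splitting the role of $b$ is played by a single letter, so the ``chain'' is just the original automorphism $(\{a,b\},b)$ and nothing is restored. You would then have to show directly that $(\{a,b\},b)$ preserves your colouring property, and it does not: applied to $\langle a, a^b\rangle$ itself it yields $\langle ab, b^{-1}ab^2\rangle$, whose folded graph contains a vertex that is both the source of one $b$-edge and the target of another, so the colouring of Step~2 fails. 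This is precisely the move the degree comparison is designed to avoid, so as written your descent step would fail rather than merely remain unverified. Two minor points: your name ``Property~$(L)$'' collides with the paper's unrelated Property~$(L)$ of Definition~\ref{Defn--Property (L)}; and your Step~4 can be shortened, since any $H$ contained in a proper free factor of $F(a,b)$ has an automorphic image inside $\langle a \rangle \leq \langle a, a^b\rangle$, so the answer in that subcase is always yes, which your immersion-into test already detects.
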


\begin{proof}
  Suppose that $H \fg \langle a, a^b \rangle$ is cyclically reduced and that $H$ is contained in no proper free factor of $F(a,b)$.
  We have an immersion $S_X(H) \rightarrow S_X(\langle a, a^b \rangle)$.
  Note that if this immersion is not a surjection, then $H$ is contained in a proper free factor of $F(a,b)$ (either $\langle a \rangle$ or $\langle a^b \rangle$).

  Every vertex of $S=S_X(H)$ therefore has a hyperlink which is a subset of either $\{a, a^{-1}, b\}$ or $\{a, a^{-1}, b^{-1}\}$.
  In particular, note that the set of initial vertices of $b$-edges in $S_X(H)$ is disjoint from the set of terminal vertices of $b$-edges, and so $S_X(H)$ has at least $2\#E_b S$ vertices.

  \begin{figure}
    \begin{center}
      \subfigure[$S_X(\langle a, a^b \rangle)$]{
        \includegraphics{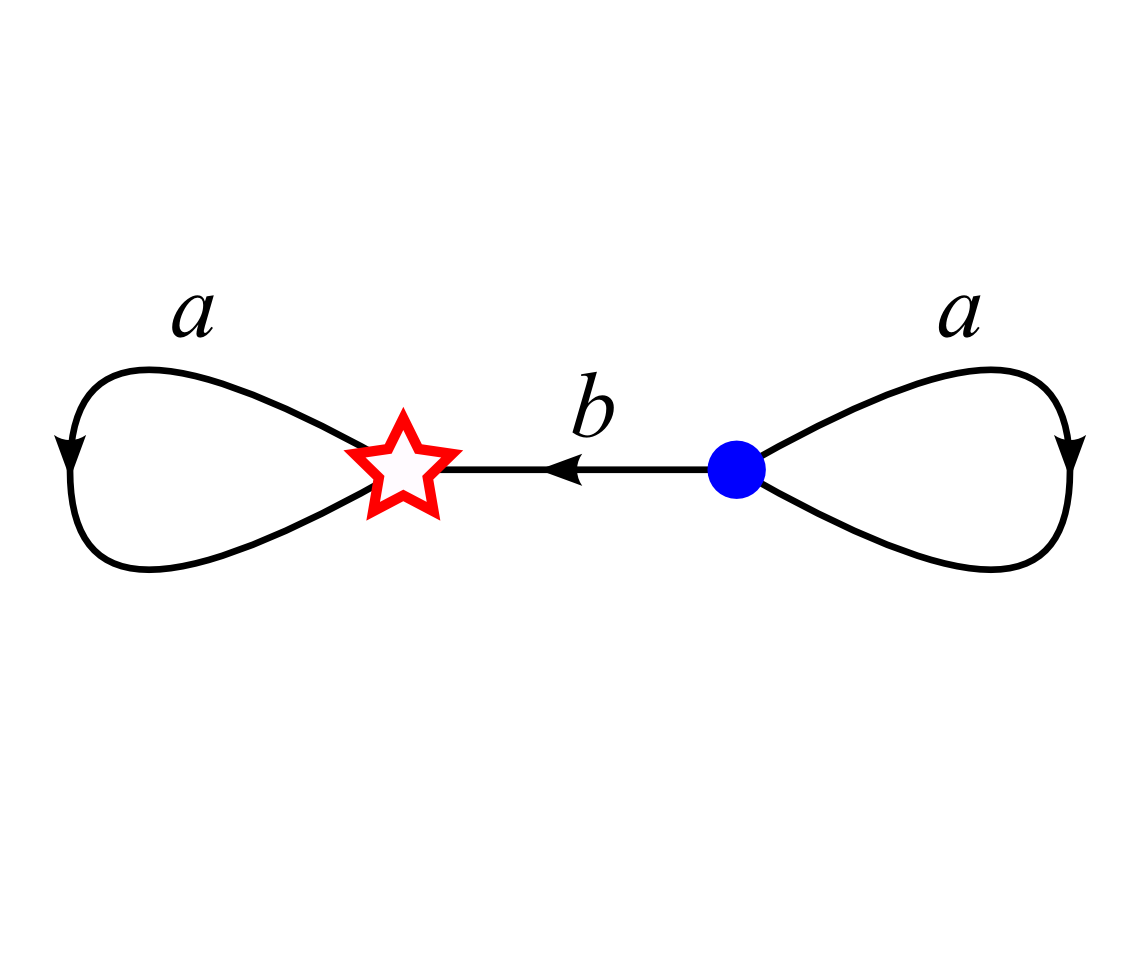} }
      \subfigure[$\Gamma(S_X(\langle a, a^b \rangle))$]{
        \includegraphics{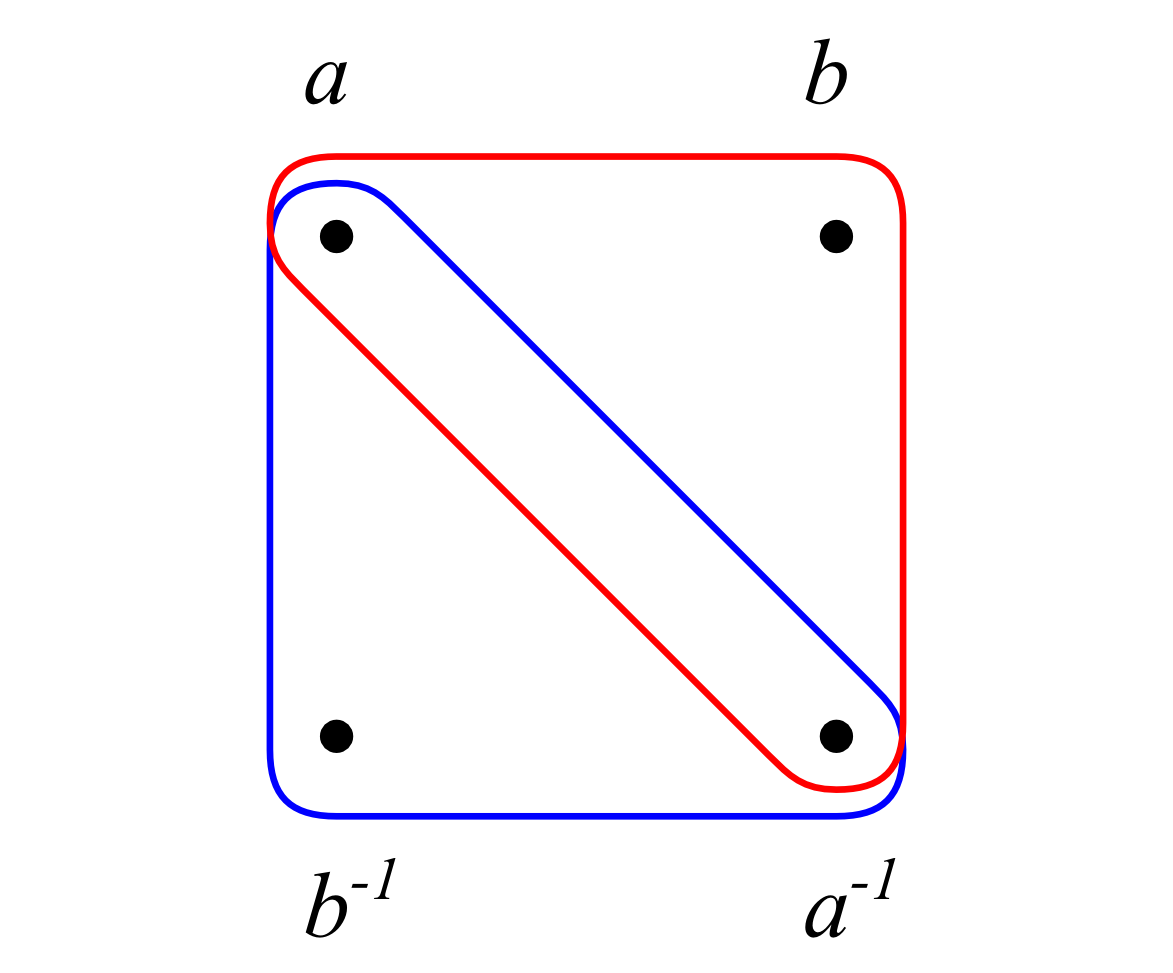} }
      \subfigure[$\Gamma(S_X(H))$ for $H \fg F(X)$, showing every possible edge.]{
        \includegraphics{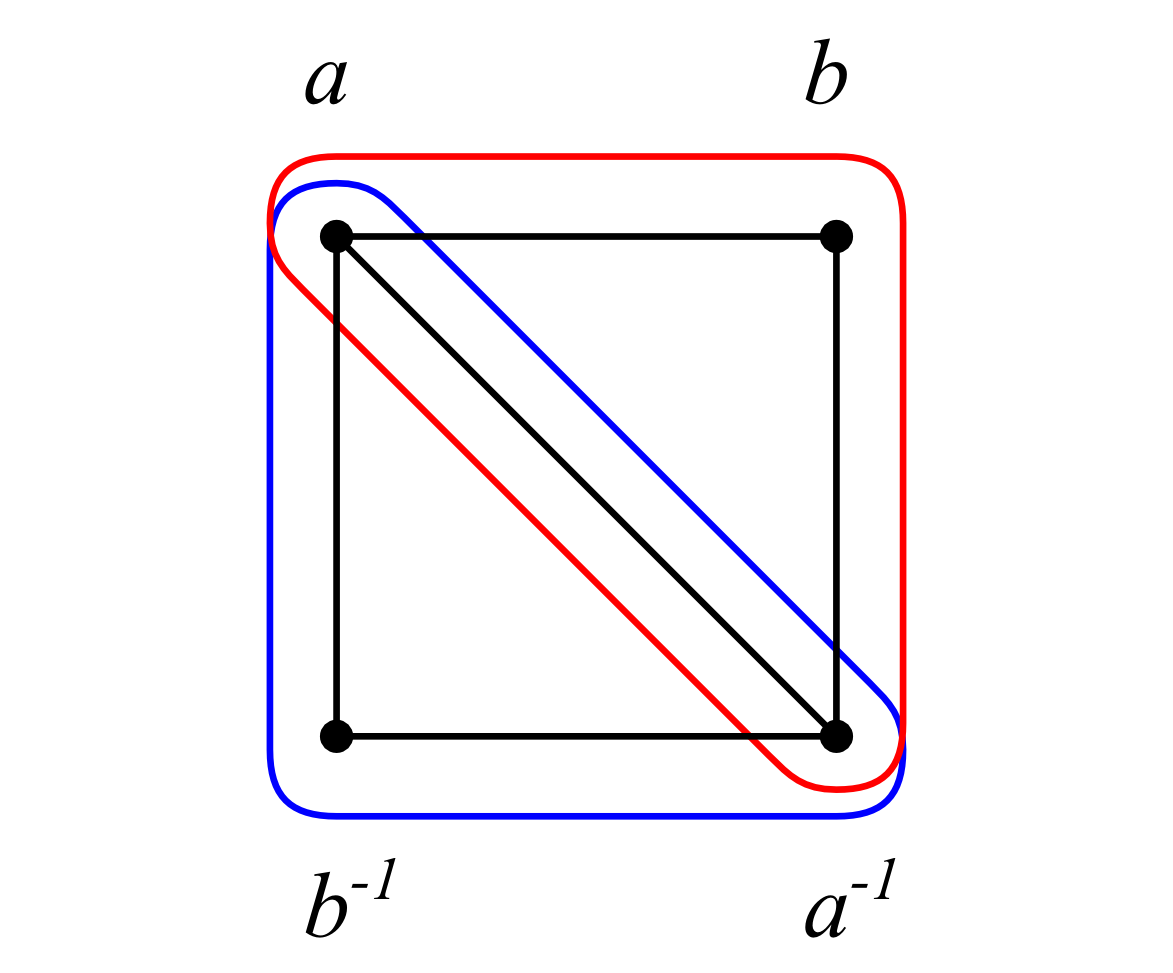} }
    \end{center}
    \caption{Graphs associated to the subgroup $\langle a, a^b \rangle$.}
  \end{figure}

  Suppose that $S - E_b S$ has $k$ connected components.
  Since $S$ is cyclically reduced, each of these components has at least one $a$-edge.
  Since $S-E_bS$ is an $a$-digraph, each connected component of $S-E_b S$ is either a path or a cycle.
  We therefore have $\#E_aS \geq 2\#E_b S - k$, hence $\#E_aS + k \geq 2 \#E_b S$.
  Since each of the connected components of $S-E_bS$ must have at least one $a$-edge, $\#E_aS \geq k$ and therefore $\#E_aS \geq \#E_bS$.
  In terms of the Whitehead hypergraph $\Gamma(S)$, we have $\deg_\Gamma(a) \geq \deg_\Gamma(b)$.

  Now suppose that $\phi = (C,m)$ is a reducing Whitehead automorphism for $H$.
  Since $C$ is an $m$-cut, if $C$ has one or three elements, $\phi(S)=S$.
  $C$ therefore has two elements; without loss of generality, we may assume that $C = \{a,b\}$.

  Suppose that $m=a$, so that $\phi = (\{a,b\},a)$ is reducing.
  Clearly $\phi$ leaves $S_X(\langle a, a^b \rangle)$ invariant, so $\phi(S)$ admits an immersion onto $S_X(\langle a, a^b \rangle)$.

  Suppose that $m=b$, so that $\phi = (\{a,b\},b)$ is reducing.
  Since $\deg_\Gamma(a) \geq \deg_\Gamma(b)$, the Whitehead automorphism $\phi' = (\{a,b\},a)$ is also reducing for $S$.
  By the above observation, $\phi(S)$ also admits an immersion onto $S_X(\langle a, a^b \rangle)$.

  Therefore, if $S$ admits an immersion onto $S_X(\langle a, a^b \rangle)$, there is at least one element of $\min(S)$ which also admits such an immersion.
  It follows directly that an arbitrary subgroup $H \fg F(a,b)$ has some automorphic image which is a subgroup of $\langle a, a^b \rangle$ if and only if some element of $\min(S_X(H))$ admits an immersion onto $S_X(\langle a, a^b \rangle)$.

  We may check whether a given graph $S$ admits an immersion onto $S_X(\langle a, a^b \rangle)$ by calculating its finitely many quotient graphs with only two vertices and determining whether any is isomorphic to $S_X(\langle a, a^b \rangle)$.

  Our algorithm is therefore the following:

  \begin{alg}
    Given $H \fg F(a,b)$, we may determine whether or not there exists $\phi \in \Aut F(a,b)$ such that $\phi(H) \leq \langle a, a^b \rangle$ as follows:
    \begin{enumerate}
      \item
        Construct the finite graph $S_X(H)$;
      \item
        Construct the finite set $\min(S_X(H))$;
      \item
        If some member of $\min(S_X(H))$ admits an immersion onto $S_X(\langle a, a^b \rangle)$, conclude that there is $\phi \in \Aut F(a,b)$ such that  $\phi(H) \leq \langle a, a^b \rangle$.
        Otherwise, conclude that no such $\phi \in \Aut F(X)$ exists.
    \end{enumerate}
  \end{alg}
\end{proof}

\subsubsection{Loop vertex subgroups in higher rank}

Let $F(X)$ be a free group with rank at least three.

Recall that the set $\mathcal{LV}$ is the set of standard loop vertex subgroups; in other words, groups of the form
\begin{equation*}
  \label{Eqn--Standard loop vertex subgroup}
  \langle U, u^v \rangle
\end{equation*}
where $U \sqcup \{v\} = X$ and $u \in \langle U \rangle$ is not a proper power.

Observe that $S_X(\langle U, u^v \rangle)$ has a unique $v$-edge, and that the complement of this edge has at least one component of rank one.

\begin{defn}[Property $(L)$] \index{Property $(L)$}
  \label{Defn--Property (L)}
  Let $S$ be a Stallings graph. We say that $S$ satisfies \emph{Property $(L)$} if
  \begin{enumerate}
    \item
      There is some $x \in X$ for which $S_X(H)$ has a unique $x$-edge $e$; and
    \item
      $S_X(H)-e$ has two connected components, at least of which is a rank one graph.
  \end{enumerate}
  We say that $H \fg F(X)$ satisfies Property $(L)$ if $S_X(H)$ does.
\end{defn}

We note that $S_X(\langle U, u^v \rangle)$ satisfies Property $(L)$.
Suppose $T$ is a cyclically reduced subgraph of $S_X(\langle U, u^v \rangle)$.
It is straightforward to verify that either $T$ satisfies Property $(L)$ or omits some $x \in X$ as an edge label.

\begin{figure}
  \begin{center}
    \includegraphics{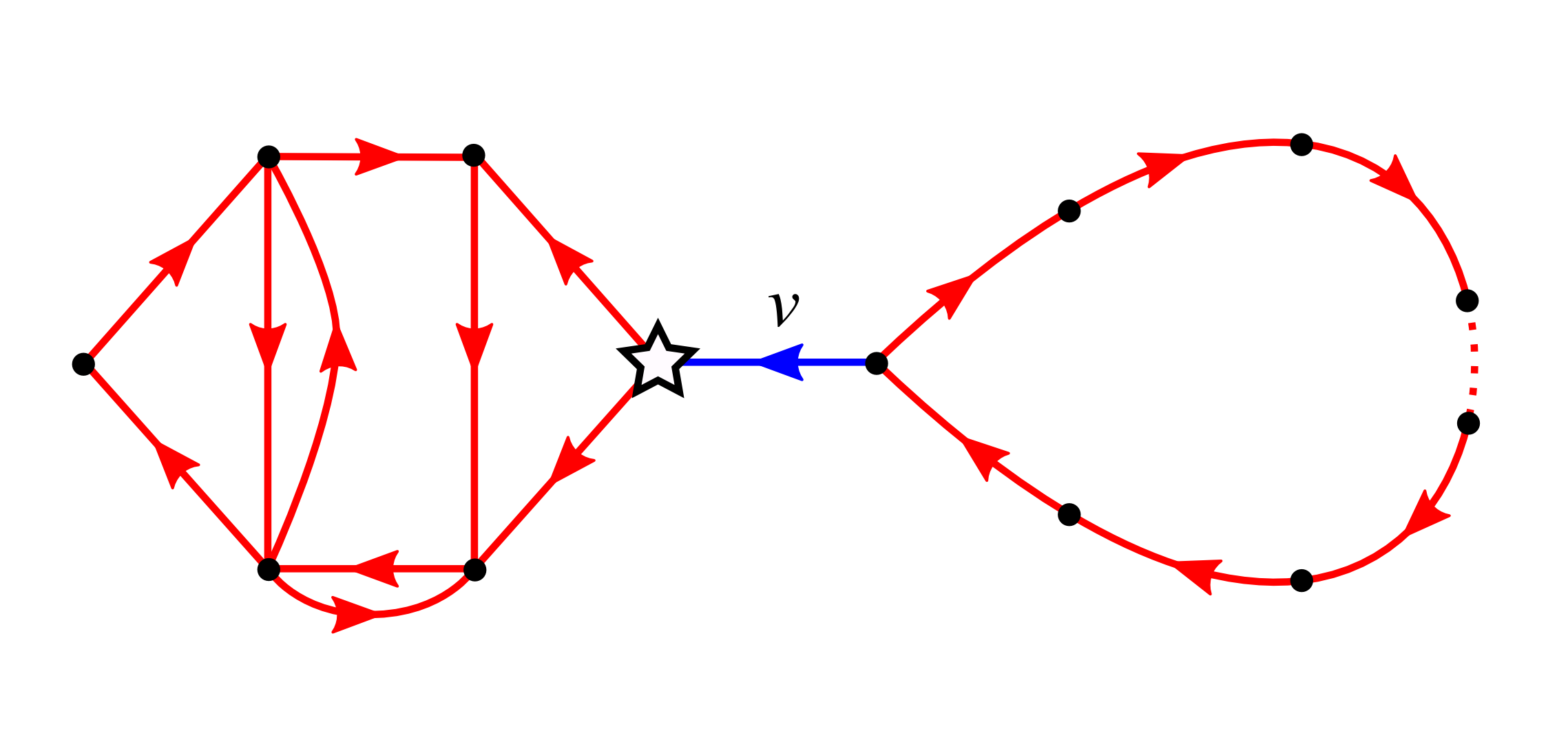}
    \caption[A Stallings graph satisfying Property $(L)$.]{A Stallings graph satisfying Property $(L)$. The red edges have labels different from $v$.}
  \end{center}
\end{figure}

\begin{lem}
  \label{Solie--Lemma--Prop L is WHR invariant}
  Let $S$ be a cyclically reduced $X$-digraph satisfying property $(L)$, and let $\phi=(C,m)$ be a reducing Whitehead automorphism for $S$.
  Then either $\phi(S)$ satisfies Property $(L)$ or $\phi(S)$ omits some element of $X$ as an edge label.
\end{lem}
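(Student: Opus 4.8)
The plan is to mirror the structure of the segment-splitting argument (Lemma for Property $(S)$): assume $S$ satisfies Property $(L)$ via the unique $x$-edge $e$, and analyze a reducing Whitehead automorphism $\phi = (C,m)$ according to where the multiplier $m$ sits relative to the partition induced by $e$. Write $S - e = S_1 \sqcup S_2$ where, say, $S_1$ is the rank-one component. The letter $x$ plays the role that $v$ does in the definition of $\mathcal{LV}$, so the key structural facts I would record first are: the label $x$ appears on exactly one edge, and the two endpoints of $e$ lie in different components $S_1, S_2$. As in the proof of Proposition~\ref{Solie--Prop--(C,m) only changes m edge count}, applying $\phi=(C,m)$ alters only the count of $m$-edges; I would exploit this to control how the deletion of $e$ behaves in the image.

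First I would dispose of the case $m = x$ (or $m = x^{-1}$): since there is a single $x$-edge, $\phi=(C,x)$ can change only the number of $x$-edges, and $\phi$ reducing forces $\cp_\Gamma(C) - \deg_\Gamma(x) < 0$; but $\deg_\Gamma(x) = \#E_x S = 1$, so the capacity of $C$ would have to be zero, meaning $C$ cuts no hyperlink. I would argue this either leaves $S$ essentially unchanged or eliminates $x$ from the label set entirely, landing in the ``omits some element of $X$'' alternative. The substantive case is $m \neq x$. Here I would split on whether the cut $C$ separates the endpoints of $e$ — equivalently, whether the multiplier's action mixes the two components across the bridge $e$. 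If $C$ does not cut across $e$ in a way that destroys the uniqueness of the $x$-edge or the rank-one structure of $S_1$, then the local ``unhooking'' construction (Figure~\ref{Fig--Applying (C,m)}) only rearranges edges within components, and I would check directly that the unique-$x$-edge condition and the rank-one-component condition survive, so $\phi(S)$ again has Property $(L)$.

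The main obstacle, as in the segment case, is the scenario where the natural reducing automorphism genuinely disturbs the rank-one component $S_1$: multiplying edges of $S_1$ by something involving the bridge could raise the rank of a component or create a second $x$-edge after folding. The plan to handle this is to imitate Proposition~\ref{Solie--Prop--multiply by b}: when the given reducing $\phi=(C,m)$ fails to preserve $(L)$, I would replace it by an alternative reducing automorphism — adjusting the cut so that $m$ becomes the unique-edge letter or restricting $C$ to one side of the bridge — and use a capacity-versus-degree comparison on $\Gamma(S)$ (in the spirit of the inequalities $\cp_{\Gamma(S)}(\Delta \cup m) \leq \cp_{\Gamma(S)}(\Delta \cup \{m,b_r\}\cup\Pi)$) to show the modified automorphism is still reducing. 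Because the uniqueness of the $x$-edge gives $\deg_\Gamma(x)=1$, the capacity bookkeeping here should be more rigid and hence easier than in the segment argument, since any cut that touches the rank-one side contributes a tightly bounded number of hyperedges.

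I expect the delicate point to be verifying that after such a substitution the resulting graph still has the bridge structure rather than merely bounded rank — i.e.\ that the replacement does not create a \emph{third} alternative in which both components acquire rank $\geq 1$ but no single-edge bridge survives. I would close that gap by noting that if no choice of reducing automorphism preserves the unique-bridge structure, then every reducing automorphism must increase the edge count on every letter simultaneously, contradicting Proposition~\ref{Solie--Prop--(C,m) only changes m edge count}, which confines the change to the single multiplier $m$. This forces the dichotomy in the statement.
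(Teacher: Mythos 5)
Your handling of the case $m = x^{\pm}$ is essentially sound and agrees with the paper: since the unique $x$-edge is a bridge, $\deg_{\Gamma}(x)=1$, so a reducing automorphism with multiplier $x$ must drop the $x$-edge count from one to zero and $\phi(S)$ omits $x$ (your ``leaves $S$ essentially unchanged'' branch is spurious, since $\phi$ is reducing by hypothesis, but you land on the right alternative). The genuine gap is in the main case $m \neq x^{\pm}$. The lemma asserts that the \emph{given} $\phi(S)$ satisfies Property $(L)$ or omits a letter; your plan, in the ``problematic'' subcase, is to replace $\phi$ by a different reducing automorphism in the spirit of Proposition \ref{Solie--Prop--multiply by b}. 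That proves a different and weaker statement (that \emph{some} reducing automorphism preserves the structure, i.e.\ that $\min(S)$ meets the class), not the lemma as stated. Worse, your closing argument --- that if no reducing automorphism preserved the bridge structure, then every reducing automorphism would have to increase the edge count of every letter simultaneously, contradicting Proposition \ref{Solie--Prop--(C,m) only changes m edge count} --- is a non sequitur: that proposition constrains only the number of $m$-edges and says nothing about connectivity, component ranks, or the survival of a cut edge, so no contradiction is obtained and the ``third alternative'' you worry about is not excluded by it.

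In fact the scenario you are guarding against never arises, and no substitution is needed: \emph{every} reducing $\phi=(C,m)$ with $m \neq x^{\pm}$ preserves Property $(L)$, which is what the paper proves directly. Write $S - e = S_1 \sqcup S_2$ with $u_i$ the endpoint of $e$ in $S_i$. Since $m \neq x^{\pm}$, the $x$-edge count is unchanged, so $\phi(S)$ still has a unique $x$-edge; the real issue, as you correctly sense, is whether it remains a bridge with a rank-one side. The paper settles this by carrying out the subdivision/folding construction of $\phi(S)$ component-wise across the bridge: subdivide and fold each $S_i$ separately to obtain $S_i'$, which has the same rank as $S_i$ because both are folded graphs representing $\pi_1(S_i)$ and its automorphic image; then join $u_1$ and $u_2$ by a path labeled $\phi(x)$, which contains exactly one $x$-edge and is a cut path. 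The only possibly unfolded vertices of the resulting graph are the two junctions, and folding there together with leaf deletion creates no new path between the endpoints of the $x$-edge, so it remains a cut edge separating two components of the original ranks, one of which is rank one. Thus no capacity-versus-degree bookkeeping, no case split on whether $C$ cuts across $e$, and no analogue of the $\phi_1,\dots,\phi_r$ sequence is required; the loop case is rigid in exactly the way the segment case is not, and importing that machinery is what led your proposal astray.
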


\begin{proof}
  Let $\phi=(C,m)$ reduce $S$.
  Let $v \in X$ be such that $S$ has a unique $v$-edge $e$ and that $S-e$ has two components, at least one of which is rank one.

  First, suppose that $m \neq v^\pm$.
  Then $\phi(S)$ also has a unique $v$-edge, since $\phi$ changes only the number of $m$-edges.

  Recall that $\phi(S)$ is constructed from $S$ in three stages: subdivision, folding, and leaf deletion.
  Let $S_1$ and $S_2$ be the connected components of $S-e$.
  Let $u_i$ be the endpoint of $e$ in $S_i$ for $i=1,2$.

  We may construct $\phi(S)$ by first subdividing and folding each $S_i$ to obtain a graph $S_i'$.
  It is clear that, since $\phi$ is an automorphism, $S_i$ and $S_i'$ have the same rank.
  We then connect the vertices $u_1$ and $u_2$ via an appropriately oriented path labeled with $\phi(v)$, to obtain a graph $T$.
  By construction, $T$ has at most two unfolded vertices, $u_1$ and $u_2$, and has a unique $v$-edge which is also a cut edge.
  Making the final two folds at $u_1$ and $u_2$ and deleting any leaves which $T$ may have introduces no new paths between the endpoints of the unique $v$-edge, and so $T$ satisfies Property $(L)$.

  Now suppose that $m = v^\pm$.
  Since $\phi$ reduces $S$, it must reduce the number of $v$-edges in $S$.
  Since $S$ has exactly one $v$-edge, $\phi(S)$ must have no $v$-edges, so $\phi(S)$ omits some element of $X$ as an edge label.
\end{proof}

\begin{thm}
  \label{Solie--Theorem--Property L decidable up to automorphism}
  There is an algorithm to determine, given $H \fg F(X)$, whether or not there exists $\phi \in \Aut F(X)$ such that $S_X(\phi(H))$ satisfies Property $(L)$.
\end{thm}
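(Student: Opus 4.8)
The plan is to follow the same template used for Proposition (proper free factors) and for $\AO(\mathcal{SV})$: reduce the automorphic question to a decidable property of the finite set $\min(S_X(H))$, using Lemma \ref{Solie--Lemma--Prop L is WHR invariant} as the key invariance result. The target statement says there is an algorithm to decide, for given $H \fg F(X)$, whether some $\phi \in \Aut F(X)$ makes $S_X(\phi(H))$ satisfy Property $(L)$. Since $\min(S_X(\phi(H))) = \min(S_X(H))$ depends only on the automorphic orbit of $H$, the whole problem should collapse to an inspection of the finitely many graphs in $\min(S_X(H))$ (and possibly their quotients), each of which can be individually tested for Property $(L)$ by the definition.

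The crux is to prove the following equivalence: \emph{there exists $\phi$ with $S_X(\phi(H))$ satisfying Property $(L)$ if and only if some element of $\min(S_X(H))$ satisfies Property $(L)$ or omits some letter of $X$ as an edge label.} First I would establish the ``if'' direction, which is immediate: if some minimal element satisfies Property $(L)$, we are done, and if some minimal element omits a letter then $H$ lies in a proper free factor, hence (noting the remark after Definition \ref{Defn--Property (L)} that every standard free factor sits inside a standard loop vertex subgroup) one can realize Property $(L)$ after a further automorphism. For the ``only if'' direction, suppose $\phi$ makes $S := S_X(\phi(H))$ satisfy Property $(L)$. If $S$ is already minimal we are finished. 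If not, Whitehead's Theorem supplies a reducing Whitehead automorphism, and Lemma \ref{Solie--Lemma--Prop L is WHR invariant} guarantees the image \emph{either} still satisfies Property $(L)$ \emph{or} omits some letter of $X$. In the first case we may iterate; in the second case, omitting a letter means $H$ is in a proper free factor, a condition that is itself preserved down to $\min$ by the earlier proper-free-factor proposition. So by induction on edge count, descending through reducing automorphisms, we reach an element of $\min(S_X(H))$ that satisfies Property $(L)$ or omits a letter.

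The main obstacle is handling the ``escape'' into the letter-omitting case cleanly. Lemma \ref{Solie--Lemma--Prop L is WHR invariant} does not preserve Property $(L)$ outright; it only preserves the disjunction ``Property $(L)$ or omits a letter,'' and once a letter is omitted the graph may no longer satisfy Property $(L)$ at all. I would therefore argue that omitting a letter is a \emph{terminal, stable} alternative: by the proper-free-factor proposition, if any automorphic image omits a letter then every element of $\min(S_X(H))$ omits a letter, so this case is detectable directly from $\min(S_X(H))$ without tracking it through the reduction. This lets me treat the two branches of the disjunction uniformly and conclude that the decidable condition on $\min(S_X(H))$ is exactly ``some minimal element satisfies Property $(L)$ or some (equivalently every) minimal element omits a letter.''

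With the equivalence in hand, the algorithm is routine and parallels the earlier corollaries: construct $S_X(H)$; construct the finite set $\min(S_X(H))$ via Whitehead's Theorem; then for each $M \in \min(S_X(H))$, test whether $M$ omits a letter of $X$ or satisfies Property $(L)$ — the latter by checking, for each $x \in X$ with a unique $x$-edge $e$, whether $M - e$ has two components one of which is rank one. If any test succeeds, output yes; otherwise output no. Each step is a finite computation, so the procedure terminates and decides the stated question.
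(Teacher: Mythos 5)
There is a genuine gap, and it is in the ``if'' direction of your claimed equivalence. The theorem asks whether some automorphic image of $H$ has a Stallings graph that \emph{itself} satisfies Property $(L)$; this is not the same as $H$ being contained, up to automorphism, in a standard loop vertex subgroup. From ``some element of $\min(S_X(H))$ omits a letter'' you infer that $H$ lies in a proper free factor, hence inside a standard loop vertex subgroup, and conclude that Property $(L)$ ``can be realized after a further automorphism.'' That last step is unjustified and in fact false: containment $\phi(H) \leq \langle U, u^v \rangle$ says nothing about the shape of $S_X(\phi(H))$. Indeed, the paper's own remark about cyclically reduced subgraphs of $S_X(\langle U, u^v\rangle)$ presents ``satisfies $(L)$'' and ``omits a letter'' as genuinely distinct alternatives, not as interchangeable ones. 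Concretely, take $H = \langle a \rangle \fg F(X)$ with $\#X \geq 3$: every automorphic image of $H$ is cyclic, so its cyclically reduced Stallings graph is a single cycle, and deleting the unique $x$-edge of a cycle leaves \emph{one} component, never the two components required by Property $(L)$. Thus no $\phi$ makes $S_X(\phi(H))$ satisfy Property $(L)$, yet $\min(S_X(H))$ is a one-edge loop omitting letters of $X$, so your algorithm outputs ``yes'' where the correct answer is ``no.'' What your procedure actually decides is the strictly weaker predicate ``$S_X(\phi(H))$ satisfies Property $(L)$ for some $\phi$, \emph{or} $H$ is contained in a proper free factor.''

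For comparison, the paper's proof is precisely your descent argument with the omit disjunct removed: it invokes Lemma \ref{Solie--Lemma--Prop L is WHR invariant} to conclude that if some $S_X(\phi(H))$ satisfies Property $(L)$ then some element of $\min(S_X(H))$ does, and its algorithm tests \emph{only} Property $(L)$ on the finite set $\min(S_X(H))$. Your instinct that the ``escape'' into letter-omission needs attention is sound --- the Lemma really does preserve only the disjunction, and the paper passes over this branch quickly --- but the correct repair is to argue that in the ``yes'' instances an $(L)$-witness still survives in $\min(S_X(H))$ (note that a graph can simultaneously omit a letter and satisfy $(L)$, e.g.\ $S_X(\langle a, a^b\rangle)$ inside $F(a,b,c)$), not to promote letter-omission to a sufficient condition for a ``yes'' answer; as the cyclic-subgroup example shows, that promotion changes the problem being solved.
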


\begin{proof}
  If there exists such a $\phi \in \Aut F(X)$ such that $S_X(\phi(H))$ satisfies Property $(L)$, then by Lemma \ref{Solie--Lemma--Prop L is WHR invariant}, $\min(S_X(\phi(H))=\min(S_X(H))$ has an element which satisfies Property $(L)$.
  Since elements of $\min(S_X(H))$ represent subgroups which are automorphic images of $H$, the converse also holds.
  Therefore, the algorithm is as follows:

  \begin{alg}
    Given $H \fg F(X)$, we may determine whether or not there exists $\phi \in \Aut F(X)$ such that $S_X(\phi(H))$ satisfies Property $(L)$ as follows:
    \begin{enumerate}
      \item
        Construct the finite graph $S_X(H)$.
      \item
        Construct the finite set $\min(S_X(H))$.
      \item
        If some element of the finite set $\min(S_X(H))$ satisfies Property $(L)$, conclude that there exists $\phi \in \Aut F(X)$ such that $S_X(\phi(H))$ satisfies Property $(L)$.
        Otherwise, conclude that no such $\phi$ exists.
    \end{enumerate}
  \end{alg}
\end{proof} 
\section{Free group actions on trees}

\subsection{Outer space}

We now apply our results to the study of free group actions on $\mathbb{R}$-trees.

\begin{defn}[$\mathbb{R}$-tree] \index{$\mathbb{R}$-tree}
  An \emph{$\mathbb{R}$-tree} is a geodesic metric space in which every two points are connected by a unique injective path and this path is a geodesic.
\end{defn}

Recall that the action of $F(X)$ on an $\mathbb{R}$-tree $T$ is:
\begin{itemize}
  \item
    \emph{isometric} if each element $w \in F(X)$ acts as an isometry on $T$;
  \item
    \emph{minimal} if there exists no $F(X)$-invariant subtree of $T$;
  \item
    \emph{very small} if the stabilizer of any tripod is trivial and the stabilizer of any arc is either trivial or maximal cyclic in the stabilizers of the endpoints of the arc;
  \item
    \emph{simplicial} if $T$ has the topological structure of a simplicial complex.
\end{itemize}
We will assume that all actions of $F(X)$ on $\mathbb{R}$-trees are isometric and minimal.

\begin{defn}[Filling subgroup] \index{filling subgroup} \index{filling element}
  \label{Defn--Filling subgroup}
  Let $H \fg F(X)$.
  We say that $H$ is a \emph{filling subgroup} if the set $H$ fixes no point in any very small action of $F(X)$ on an $\mathbb{R}$-tree, and that $H$ is a \emph{non-filling subgroup} if $H$, as a set, fixes a point in some very small action of $F(X)$ on an $\mathbb{R}$-tree.
  We say that $w \in F(X)$ is a \emph{filling element} if $w$ generates a filling subgroup of $F(X)$.
\end{defn}

The work of Guirardel allows one to approximate the very small action of $F(X)$ on a given $\mathbb{R}$-tree by a very small action on a simplicial tree.
In particular, if $H$ fixes a point in the $\mathbb{R}$-tree, then we may have $H$ fix a point in the simplicial approximation \cite[Theorem 1]{Guirardel1998}.

\begin{prop}
  A subgroup $H \fg F(X)$ is non-filling if and only if $H$ fixes a point in some very small action of $F(X)$ on a simplicial tree $T$.
\end{prop}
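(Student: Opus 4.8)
The plan is to derive both implications directly from the definitions together with Guirardel's approximation theorem \cite{Guirardel1998}, which has already been summarized in the paragraph preceding the statement. The two directions are very different in character: one is essentially formal, while the other is where all the analytic content resides.

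The backward implication is immediate. A simplicial tree $T$ becomes an $\mathbb{R}$-tree once each edge is metrized isometrically to a compact interval and $T$ is equipped with the resulting path metric; the $F(X)$-action remains isometric and minimal, and the very small conditions (trivial tripod stabilizers, and arc stabilizers that are trivial or maximal cyclic in the endpoint stabilizers) are phrased identically in the simplicial and metric settings. Hence a point of $T$ fixed by $H$ in a very small simplicial action is a point fixed by $H$ in a very small action of $F(X)$ on an $\mathbb{R}$-tree, so $H$ is non-filling by Definition \ref{Defn--Filling subgroup}.

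For the forward implication, suppose $H$ is non-filling. By definition there is a very small action of $F(X)$ on an $\mathbb{R}$-tree $T$ and a point $p \in T$ fixed by every element of $H$. I would then invoke Guirardel's approximation theorem \cite[Theorem 1]{Guirardel1998}: the action on $T$ is a limit of very small \emph{simplicial} actions of $F(X)$, and, crucially, this approximation can be taken so that the ellipticity of the finitely generated subgroup $H$ is preserved, i.e.\ so that $H$ fixes a point in one of the approximating simplicial trees. Combined with the definition of non-filling, this produces a very small simplicial action of $F(X)$ in which $H$ fixes a point, as required, and the two implications together establish the equivalence.

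The main obstacle is precisely this transfer of the fixed-point property across the approximation, and it is the only place real work is needed. For a single $w \in F(X)$, ellipticity is detected by the vanishing of the translation length $\ell_T(w)$; but although translation lengths converge along the approximating sequence, a strictly positive value could in principle converge to zero without ever vanishing, so length functions alone do not suffice. One must instead use the finer structure of Guirardel's construction—the equivariant morphisms relating the approximating simplicial trees to $T$—to promote $H$ from an elliptic subgroup of $T$ to an elliptic subgroup of a nearby simplicial tree, using in an essential way that $H$ is finitely generated. This is exactly the content of the cited theorem, so the argument reduces to a careful citation rather than to new technical estimates.
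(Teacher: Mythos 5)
Your proposal matches the paper's argument exactly: the paper likewise treats the backward direction as immediate (a very small simplicial action is in particular a very small $\mathbb{R}$-tree action) and obtains the forward direction by citing Guirardel's approximation theorem \cite[Theorem 1]{Guirardel1998} with the fixed point of the finitely generated subgroup $H$ preserved in the simplicial approximation. Your added remark that translation lengths alone would not suffice is a fair gloss on why the full strength of the cited theorem, rather than mere convergence of length functions, is what the argument rests on.
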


A very small action of $F(X)$ on a simplicial tree gives a decomposition of $F(X)$ as a graph of groups, the details of which can be found in \cite{Serre2003}.
If a subgroup $H \fg F(X)$ fixes a point in a very small action of $F(X)$ on a simplicial tree, then $H$ is necessarily elliptic in some elementary cyclic splitting of $F(X)$.
The author previously showed that filling elements are generic in $F(X)$ in the sense that a sufficiently long element is overwhelmingly likely to be filling \cite{Solie2010}.

As a direct consequence of the previous section, we find we have algorithms to identify elements which fix no point in certain types of free group actions on simplicial trees.

\begin{cor}
There exists an algorithm to determine whether a given element $w \in F(a,b)$ fixes a point in some very small action of $F(a,b)$ on a simplicial tree.
\end{cor}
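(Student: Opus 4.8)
The plan is to reduce this tree-theoretic question to the automorphic orbit problem $\AO(\langle a, a^b \rangle)$, which is already decidable by Theorem \ref{Solie--Theorem--ASP in F2 case}. First I would translate the hypothesis into subgroup language: by the preceding proposition, $w$ fixes a point in some very small simplicial action of $F(a,b)$ precisely when the cyclic subgroup $\langle w \rangle$ is non-filling. So it suffices to decide whether $\langle w \rangle$ is non-filling, and since $\langle w \rangle$ is finitely generated this is a legitimate input for our machinery.

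Next I would characterize the non-filling cyclic subgroups of $F(a,b)$ concretely. If $\langle w \rangle$ fixes a point in a very small simplicial action, then it is elliptic in some elementary cyclic splitting of $F(a,b)$; conversely, any elementary splitting gives, via Bass--Serre theory, a very small simplicial action in which $\langle w \rangle$ fixes the vertex stabilized by the relevant vertex subgroup. In rank two there are no segment splittings, since $\mathcal{SV} = \emptyset$, so the only elementary cyclic splittings available are loop splittings, whose vertex subgroups are the conjugates of automorphic images of the elements of $\mathcal{LV} = \{\langle a, a^b \rangle, \langle b, b^a \rangle\}$. The one remaining possibility, ellipticity in a free splitting with trivial edge group, is subsumed: every proper free factor of $F(a,b)$ is cyclic, generated by a primitive, hence has an automorphic image that is a standard free factor, and every standard free factor lies inside a standard loop vertex subgroup. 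Thus $\langle w \rangle$ is non-filling if and only if $w$ is conjugate into some loop vertex subgroup.

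Because conjugation is realized by inner automorphisms and each loop vertex subgroup has an automorphic image in $\mathcal{LV}$, the condition ``$w$ is conjugate into some loop vertex subgroup'' is equivalent to the existence of $\psi \in \Aut F(a,b)$ with $\psi(w) \in \langle a, a^b \rangle$, using that both elements of $\mathcal{LV}$ lie in a single automorphic orbit. This is exactly $\AO(\langle a, a^b \rangle)$ applied to $H = \langle w \rangle$. The algorithm is then to run the procedure of Theorem \ref{Solie--Theorem--ASP in F2 case} on the input $H = \langle w \rangle$: build $S_X(\langle w \rangle)$, compute $\min(S_X(\langle w \rangle))$, and test whether some element immerses onto $S_X(\langle a, a^b \rangle)$; $w$ fixes a point in some very small simplicial action if and only if this test succeeds.

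I expect the main obstacle to be the equivalence established in the second paragraph, namely that in $F(a,b)$ the a priori broad notion of fixing a point in \emph{some} very small simplicial action collapses exactly to loop-splitting ellipticity. The easy direction (ellipticity implies a fixed point) is immediate from Bass--Serre theory, but the reverse requires reducing an arbitrary very small simplicial action to an elementary one and then invoking the rank-two facts that $\mathcal{SV}$ is empty and that every proper free factor embeds in a loop vertex subgroup. Once this bookkeeping is carried out, decidability follows at once from Theorem \ref{Solie--Theorem--ASP in F2 case}.
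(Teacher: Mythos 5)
Your proposal is correct and takes essentially the same route as the paper, which presents this corollary as a direct consequence of the proposition identifying non-filling subgroups with point stabilizers in very small simplicial actions, the observation that such a fixed point forces ellipticity in an elementary cyclic splitting, and the rank-two facts that $\mathcal{SV}=\emptyset$ and that $\mathcal{LV}$ is a single automorphic orbit, so that everything reduces to $\AO(\langle a, a^b \rangle)$, decidable by Theorem \ref{Solie--Theorem--ASP in F2 case}. The bookkeeping you flag as the main obstacle (free splittings being subsumed because every proper free factor of $F(a,b)$ is cyclic and embeds in a loop vertex subgroup) is exactly the implicit content of the paper's remark that every standard free factor is a subgroup of a standard loop vertex subgroup.
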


\begin{cor}
There exists an algorithm to determine whether a given element $w \in F(X)$ fixes a point in some very small action of $F(X)$ on a simplicial tree whose fundamental domain is a single edge with distinct endpoints.
\end{cor}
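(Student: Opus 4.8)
The plan is to reduce this geometric statement to the purely algebraic ellipticity problem already settled in Theorem \ref{Solie--Theorem--Filling decidable for segment splittings}, applied to the cyclic subgroup $H = \langle w \rangle$. First I would record that, because a tree is uniquely geodesic, $w$ fixes a point $p$ if and only if the whole subgroup $\langle w \rangle$ does (from $wp = p$ we get $w^n p = p$ for all $n$); so the question is whether $\langle w \rangle$ is elliptic. By Bass--Serre theory, a minimal very small action of $F(X)$ on a simplicial tree whose quotient graph is a single edge joining \emph{two distinct} vertices corresponds to a nontrivial very small elementary cyclic \emph{segment} splitting of $F(X)$: the ``distinct endpoints'' hypothesis is exactly what excludes the loop/HNN case, and minimality forces nontriviality so that the vertex subgroups are governed by Proposition \ref{Solie--Prop--Characterization of vertex subgroups}. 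Thus the corollary asks precisely whether $\langle w \rangle$ is elliptic in some nontrivial very small elementary cyclic segment splitting.

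When $\#X \geq 3$ this is literally the instance of Theorem \ref{Solie--Theorem--Filling decidable for segment splittings} with $H = \langle w \rangle$: since $\langle w \rangle$ is finitely generated, the cited algorithm decides ellipticity in a segment splitting and so answers the question. I would add the brief remark justifying why detecting membership in the collection $\mathcal{SV}$ alone suffices: a segment splitting has two vertex groups, one a segment vertex subgroup and the other a proper free factor $\langle B \rangle$ of rank at least two with nonempty complement, and when $\#X \geq 3$ one checks directly that such a $\langle B \rangle$ is itself a (standard) segment vertex subgroup. Hence both vertex groups are captured by $\AO(\mathcal{SV})$ and no separate free-factor test is required, which is exactly what makes the single theorem above do all the work.

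It then remains only to dispose of the low-rank cases. By Proposition \ref{Solie--Prop--Characterization of vertex subgroups} a segment splitting requires a basis partition $A \sqcup B$ with $\#A \geq 1$ and $\#B \geq 2$, hence $\#X \geq 3$; so when $\#X \leq 2$ there are no nontrivial very small elementary cyclic segment splittings at all, no very small action has the prescribed fundamental domain, and the algorithm simply returns ``no''. I expect the only genuine work to lie in this translation layer --- verifying that ``single edge with distinct endpoints'' together with minimality yields exactly the nontrivial segment splittings to which the earlier machinery applies, and handling the rank-two degeneracy --- since all of the real algorithmic content is already contained in Theorem \ref{Solie--Theorem--Filling decidable for segment splittings} and the decidability of $\AO(\mathcal{SV})$.
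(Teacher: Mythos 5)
Your reduction to Theorem \ref{Solie--Theorem--Filling decidable for segment splittings} is the right overall route and matches the paper's intent, and your observation that the second vertex group $\langle B \rangle$ is itself a standard segment vertex subgroup (write $B = \{b_1, \dots, b_k\}$ with $k \geq 2$ and take $A' = \{b_1, \dots, b_{k-1}\}$, $b' = b_k$, $B' = A \cup \{b_k\}$) is correct and genuinely needed to see that $\AO(\mathcal{SV})$ captures both vertex groups. But there is a real gap in exactly the translation layer you flagged as the only substantive work: the paper's definition of a \emph{very small} action allows arc stabilizers to be \emph{trivial}, not only maximal cyclic. Hence a minimal very small simplicial action whose fundamental domain is a single edge with distinct endpoints corresponds either to a nontrivial very small elementary cyclic segment splitting \emph{or} to a free splitting $F(X) = P \ast Q$ with trivial edge group, and your claimed equivalence with the cyclic case alone does not follow from ``distinct endpoints plus minimality.'' For $\#X \geq 3$ the gap is repairable: if $w$ lies in a proper free factor then, up to an automorphism, $w \in \langle X - \{x\} \rangle$ for some $x \in X$, and by your own remark this corank-one free factor is a standard segment vertex subgroup, so such $w$ are still detected by $\AO(\mathcal{SV})$; but you never make this argument, and without it the equivalence you assert is unjustified.

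Where the gap actually yields a wrong answer is your rank-two branch. You claim that for $\#X \leq 2$ no very small action has the prescribed fundamental domain, so the algorithm returns ``no.'' That is false: $F(a,b) = \langle a \rangle \ast \langle b \rangle$ is a free splitting whose Bass--Serre tree carries a minimal, very small (arc stabilizers are trivial, which the definition permits) simplicial action with fundamental domain a single edge with distinct endpoints, and $w = a$ fixes a vertex in it. What is true in rank two is only that there are no nontrivial very small elementary \emph{cyclic} segment splittings ($\mathcal{SV} = \emptyset$); free segment splittings persist. The correct rank-two branch is therefore not the constant answer ``no'' but a test for whether $w$ is conjugate into a proper free factor, which the paper has already shown decidable via $\AO(\mathcal{SF})$. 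With that fix, together with the corank-one free factor remark above for $\#X \geq 3$, your argument goes through and agrees with the proof the paper intends when it calls this corollary a direct consequence of the preceding section.
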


\bibliographystyle{plain}
\bibliography{segment}

\end{document}